\documentclass[11pt,reqno]{amsart}
\usepackage[utf8]{inputenc}
\usepackage[british]{babel}
\usepackage{amsmath,amssymb,amsthm,bbm,mathtools,calc,verbatim,enumerate,tikz,url,mathrsfs,fullpage,hyperref,bm,scalerel}
\usepackage[numbers,longnamesfirst,sort]{natbib}
\usepackage{cleveref}
\usepackage{pdfprivacy}

\usepackage{lmodern}
\usepackage[T1]{fontenc}
\usepackage[activate={true,nocompatibility},final,kerning=true]{microtype}

\renewcommand{\le}{\leqslant}
\renewcommand{\leq}{\leqslant}
\renewcommand{\ge}{\geqslant}
\renewcommand{\geq}{\geqslant}
\newtheorem{theorem}{Theorem}[section]
\newtheorem{proposition}[theorem]{Proposition}
\newtheorem{remark}[theorem]{Remark}
\newtheorem{definition}[theorem]{Definition}
\newtheorem{lemma}[theorem]{Lemma}

\newtheorem{conjecture}[theorem]{Conjecture}
\newtheorem{question}[theorem]{Question}
\newtheorem*{claim*}{Claim}

\renewcommand{\P}[1]{\operatorname{Perm}(#1)}
\newcommand{\eps}{\varepsilon}
\newcommand{\Bin}{\operatorname{Bin}}
\newcommand{\dist}{\operatorname{dist}}
\newcommand{\iso}{\mathrm{i}}
\newcommand{\tran}{\intercal}
\newcommand{\cart}[1]{\vcenter{#1\kern.2ex\hbox{$\square$}\kern.2ex}}
\newcommand{\bigcart}{\mathop{\mathchoice{\cart\Large}{\cart\large}{\cart\normalsize}{\cart\small}}}

\title{The evolution of random subgraphs of the permutahedron}

\author{Maur\'icio Collares}
\address{Instituto de Matem\'atica, Estat\'istica e Ci\^encia da Computa\c{c}\~ao, Universidade de S\~ao Paulo, Rua do Mat\~ao 1010, 05508-090 S\~ao Paulo, Brazil}
\email{collares@ime.usp.br}

\author{Joseph Doolittle}
\address{RobotDreams GmbH, Schubertstra{\ss}e 6a, 8010 Graz, Austria}
\email{nerdyjoe@gmail.com}

\author{Joshua Erde}
\address{School of Mathematics, University of Birmingham, Birmingham, B15 2TT, UK}
\email{j.erde@bham.ac.uk}

\date{}

\begin{document}
\begin{abstract}
  In their seminal paper introducing the theory of random graphs, Erd\H{o}s and R\'{e}nyi considered the evolution of the structure of a random subgraph of $K_n$ as the density increases from $0$ to $1$, identifying two key points in this evolution --- the percolation threshold, where the order of the largest component seemingly jumps from logarithmic to linear in size, and the connectivity threshold, where the subgraph becomes connected. Similar phenomena have been observed in many other random graph models, and in particular, works of Ajtai, Koml\'{o}s and Szemer\'{e}di and of Erd\H{o}s and Spencer determine corresponding thresholds for random subgraphs of the hypercube.

  We study similar questions on the permutahedron. The permutahedron, like the hypercube, has many equivalent representations, and arises as a natural object of study in many areas of combinatorics. In particular, as a highly-symmetric simple polytope, like the $n$-simplex and $n$-cube, this percolation model arises naturally as an analogue to the Erd\H{o}s--R\'{e}nyi random graph and the percolated hypercube.

  We determine the percolation threshold and the connectivity threshold for random subgraphs of the permutahedron. Along the way we develop a novel graph exploration technique which can be used to find exponentially large clusters after percolation in high-dimensional geometric graphs, and we initiate the study of the isoperimetric properties of the permutahedron.
\end{abstract}
\maketitle

\section{Introduction}
Given a graph $G$ and a probability $p \in (0,1)$ the \emph{percolated random subgraph} $G_p$ is the random subgraph of $G$ obtained by including each edge of $G$ independently with probability $p$.
In a sense, $G_p$ represents the \emph{typical} structure of a subgraph of $G$ with relative density $p$.
This model was first considered in the context of statistical physics, where it was used to model the flow of liquid through a porous medium whose channels could be randomly blocked~\cite{BH57}.
There is a natural way to couple the random subgraphs $G_p$ for $p \in [0,1]$ so that they are increasing (as subgraphs) as $p$ increases, and in this way we can think of the structure of $G_p$ as \emph{evolving} as we increase $p$ from $0$ to $1$, with $G_p$ gradually growing from an empty graph to the full \emph{host} graph $G$.

Perhaps the simplest percolation model is when we take the host graph $G$ to be a complete graph $K_n$, in which case we recover the classical \emph{binomial random graph model} $G(n,p)$.
The evolution of the structure of $G(n,p)$ was the topic of one of the earliest papers on random graphs, in which Erd\H{o}s and R\'enyi~\cite{ER60} showed that the component structure of $G(n,p)$\footnote{In fact, their result was stated in terms of the \emph{uniform} random graph model $G(n,m)$.}
undergoes a dramatic phase transition when $p$ is around $\frac{1}{n}$.
More precisely, given a constant $c>1$, let us define $\gamma(c)$ to be the unique solution in $(0,1)$ of the equation
\begin{equation}\label{e:survival-prob}
  \gamma(c)=1-\exp\bigl(-c\gamma(c)\bigr).
\end{equation}
\begin{theorem}[\cite{ER60}]\label{t:ER}
  Let $c > 0$ be a constant and let $p =\frac{c}{n}$.
  Then, with high probability\footnote{With probability tending to one as $n \to \infty$.} (whp)
  \begin{enumerate}[$(i)$]
  \item\label{i:ERsub} if $c<1$, the largest component of $G(n,p)$ has order\footnote{Throughout the paper, unless otherwise specified, all logarithms are taken with respect to the natural base $e$.} $\Theta(\log n)$; and,
  \item\label{i:ERsup} if $c>1$, there exists a \emph{giant} component in $G(n,p)$ of order $(1+o(1))\gamma(c)n$, where $\gamma$ is defined according to~\eqref{e:survival-prob}, and the second-largest component has order $\Theta(\log n)$.
  \end{enumerate}
\end{theorem}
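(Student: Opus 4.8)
The plan is the classical exploration approach. First I would analyse the component $C(v)$ of a fixed vertex $v$ by a breadth-first search, revealing $C(v)$ while tracking the number $Y_t$ of \emph{active} vertices after $t$ vertices have been processed: $Y_0=1$ and $Y_t=Y_{t-1}-1+Z_t$, where conditionally on the past $Z_t\sim\mathrm{Bin}\bigl(n-(t-1)-Y_{t-1},\,p\bigr)$, and $|C(v)|$ is the first time $Y_t$ hits $0$. As long as $t+Y_t\le n^{2/3}$ the increment $Z_t$ lies stochastically between $\mathrm{Bin}(n-2n^{2/3},p)$ and $\mathrm{Bin}(n,p)$, whose means are $c-o(1)$ and $c$; hence $|C(v)|$ is sandwiched between the total progenies $|T^-|$ and $|T^+|$ of Galton--Watson branching processes with these two offspring laws, at least until the progeny first exceeds $n^{2/3}$, and the extinction probabilities of $T^{\pm}$ both converge to $1-\gamma(c)$ since a $\mathrm{Bin}(n,c/n)$-offspring process converges to a $\mathrm{Pois}(c)$-offspring one as $n\to\infty$. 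The second half of the plan is to upgrade this local information to a global statement about $G(n,p)$ using concentration and a sprinkling argument; a slicker alternative, should the estimates become unwieldy, would be a single depth-first exploration in the spirit of Krivelevich and Sudakov.

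If $c<1$ then $T^+$ is already subcritical, so $\mathbb{P}(|C(v)|\ge k)\le\mathbb{P}(|T^+|\ge k)\le e^{-\beta k}$ for a constant $\beta=\beta(c)>0$; taking $k=\tfrac{2}{\beta}\log n$ and a union bound over $v$ shows that whp every component has order $O(\log n)$. For the matching $\Omega(\log n)$ lower bound I would run a first- and second-moment argument on the number $N_k$ of components that are trees on exactly $k=\eps\log n$ vertices: a short computation gives $\mathbb{E}N_k=\Theta\bigl(n\,k^{-5/2}(ce^{1-c})^k\bigr)$, which tends to infinity once $\eps=\eps(c)$ is small enough because $ce^{1-c}<1$ for $c<1$, while $\mathrm{Var}(N_k)=o\bigl((\mathbb{E}N_k)^2\bigr)$, so whp $N_k\ge 1$.

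If $c>1$ then $T^{\pm}$ are supercritical with survival probabilities tending to $\gamma(c)$, and three things must be established. \emph{(Gap.)} Whp every component of $G(n,p)$ has order either $O(\log n)$ or at least $n^{2/3}$: if $Y_t$ has not died by time $K\log n$ then, by its positive drift, it is at height $\gg\log n$ except with probability $n^{-\Omega(1)}$, and from such a height it returns to $0$ before reaching $n^{2/3}$ only with probability $n^{-\Omega(1)}$; a union bound over starting vertices then forbids components of intermediate order. \emph{(Uniqueness.)} A standard two-round exposure, writing $p=p_1+p_2-p_1p_2$ with $p_2=\eps/n$: after the first round each large component contains $\ge n^{2/3}$ vertices, so a sprinkled edge joining any two of them is present except with probability $e^{-\Omega(n^{1/3})}$, and a union bound over the at most $n^{2/3}$ pairs of them forces all large components to merge into one in $G(n,p)$. \emph{(Order.)} Writing $S=\sum_v\mathbbm{1}\{|C(v)|\text{ is small}\}$, the branching comparison gives $\mathbb{P}(|C(v)|\text{ small})\to 1-\gamma(c)$, so $\mathbb{E}S=(1+o(1))(1-\gamma(c))n$, and a second-moment estimate --- the covariance of two such indicators is $o(1)$, since conditioning on one component being small perturbs the other only negligibly --- yields $\mathrm{Var}(S)=o(n^2)$; hence whp $S=(1+o(1))(1-\gamma(c))n$, and since by the gap the unique large component accounts for $n-S$ vertices, it has order $(1+o(1))\gamma(c)n$. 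Finally, conditioning on its vertex set, the graph induced on the remaining $(1+o(1))(1-\gamma(c))n$ vertices is distributed as $G(n',p)$ with $n'p=c(1-\gamma(c))<1$ --- the inequality being exactly that $ce^{-c\gamma}<1$ when $\gamma=1-e^{-c\gamma}$, equivalently that a $\mathrm{Pois}(c)$ Galton--Watson tree conditioned on extinction is subcritical --- so the subcritical analysis applies to this residual graph, and its largest component, which whp is the second largest of $G(n,p)$, has order $\Theta(\log n)$.

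The main obstacle is the supercritical case, and within it the combination of the ``gap'' statement with the concentration of the giant's order: one has to control the exploration simultaneously on the $\log n$-scale and on the $n$-scale with matching two-sided branching-process estimates, and to handle carefully the conditioning ``$C(v)$ is large'' both in the sprinkling step and in identifying the residual graph as subcritical. This is precisely where the original Erd\H{o}s--R\'enyi argument, and its later streamlinings, concentrate their effort.
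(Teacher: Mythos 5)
This statement is quoted in the paper as the classical Erd\H{o}s--R\'enyi theorem and is not proved there at all --- it is cited from \cite{ER60} as background --- so there is no in-paper argument to match yours against. Your proposal is the standard modern proof (BFS exploration sandwiched between binomial branching processes, a gap statement, sprinkling for uniqueness, and a second-moment count of vertices in small components), which is correct in outline and is in fact a different route from the original enumerative proof of Erd\H{o}s and R\'enyi, who counted components of each order directly; the exploration approach is shorter and is the one that generalises to host graphs like $Q^n$ and the permutahedron, which is exactly the spirit of this paper. Two small points of care. First, your duality step is stated too strongly: conditioned on the vertex set $S$ of the giant, the graph induced on $V\setminus S$ is \emph{not} exactly $G(n-|S|,p)$, but $G(n-|S|,p)$ conditioned on having no component of order larger than $|S|$ (the absence of edges to $S$ does not affect the induced graph); since the conditioning event holds whp for the subcritical parameter $c(1-\gamma(c))<1$, your whp conclusions survive, but the statement needs this caveat. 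Second, for the $\Omega(\log n)$ lower bound on the second-largest component you can avoid duality altogether: the first/second-moment count of tree components of order $\eps\log n$ works verbatim for $c>1$ as well, since $ce^{1-c}<1$ for every $c\neq 1$ --- this is precisely the device the paper itself uses for the permutahedron in Lemma \ref{l:treecomp}, which is stated for all $c>0$. With these repairs your plan is a complete and standard proof of the theorem.
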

We note that Theorem~\ref{t:ER} is only a very broad description of the phase transition, and much more is known about the structure of $G(n,p)$ when $p$ is close to the critical point (see, for example,~\cite{B84,L90,LPW94}).

It turns out that $G(n,p)$ is in some sense partially \emph{universal} in this class of percolation models, in the sense that for many $n$-regular\footnote{Whilst the complete graph $K_n$ is actually $(n-1)$-regular, we are viewing the phase transition on a coarse enough level that the difference is negligible.} host graphs $G$, the component structure of the percolated subgraph $G_p$ undergoes a quantitatively similar phase transition as that described in Theorem~\ref{t:ER} when $p$ is around $\frac{1}{n}$.
In these models, the broad scale structure of $G_p$, under the right scaling, seems to be independent of the underlying geometry of the host graph $G$.
A particularly notable example of this phenomenon was shown to occur in the $n$-dimensional hypercube $Q^n$ by Ajtai, Koml\'{o}s and Szemer\'{e}di~\cite{AKS81}, answering a well-known conjecture of Erd\H{o}s and Spencer~\cite{ES79}.

\begin{theorem}[\cite{AKS81, BKL92}]\label{t:cube}
  Let $c>0$ be a constant and let $p=\frac{c}{n}$.
  Then, whp,
  \begin{enumerate}[$(i)$]
  \item if $c<1$, the largest component of $Q^n_p$ has order $\Theta(n)$; and,
  \item if $c>1$, there exists a giant component in $Q^n_p$ of order $(1+o(1))\gamma(c)2^n$, where $\gamma$ is defined according to~\eqref{e:survival-prob}, and the second-largest component has order $\Theta(n)$.
  \end{enumerate}
\end{theorem}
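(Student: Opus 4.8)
The plan is to analyse both regimes by comparing the breadth-first exploration of the component $C(v)$ of a fixed vertex $v$ in $Q^n_p$ with a Galton--Watson branching process. When we reveal the $n$ edges at a newly discovered vertex, every neighbour not yet in the explored set is \emph{fresh} and present independently with probability $p=c/n$, so as long as we have discovered fewer than $\eps n$ vertices the number of children lies between $\mathrm{Bin}((1-\eps)n,c/n)$ and $\mathrm{Bin}(n,c/n)$, both close to $\mathrm{Poisson}(c)$; thus, up to scale $o(n)$, the exploration behaves essentially like the branching process governing $G(n,p)$, which is subcritical for $c<1$ and survives with probability $\gamma(c)$ from \eqref{survival prob} for $c>1$. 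Past scale $\sqrt n$ the local tree-likeness of $Q^n$ breaks down (short cycles appear), and one has to bring in isoperimetric and expansion properties of the hypercube instead.

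\emph{The subcritical case.} For the upper bound, the exploration of $C(v)$ is stochastically dominated by a subcritical Galton--Watson process with offspring $\mathrm{Bin}(n,c/n)$, whose total progeny satisfies $\mathbb{P}(\mathrm{total}\ge k)\le e^{-\rho k}$ for a constant $\rho=\rho(c)>0$ and every $k$; a union bound over the $2^n$ vertices then shows the largest component of $Q^n_p$ has order at most $Cn$ for any fixed $C>(\ln 2)/\rho$. For the matching lower bound I would observe that, run until it first reaches size $\eps n$ for a small constant $\eps$, the exploration \emph{dominates} a subcritical Galton--Watson process with offspring $\mathrm{Bin}((1-\eps)n,c/n)$, whose total progeny exceeds $\eps n$ with probability at least $e^{-\rho'\eps n}$ for a constant $\rho'$ close to $\rho$; for $\eps$ small enough this makes the expected number of vertices lying in components of order at least $\eps n$ exponentially large in $n$. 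Since toggling a single edge changes this count by at most $2\eps n$, the Azuma--Hoeffding inequality gives concentration, and so whp some component has order $\Omega(n)$.

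\emph{The supercritical case.} Here I would use a two-round sprinkling exposure, writing $p=p_1+p_2-p_1p_2$ with $p_1=c_1/n$ for a fixed $c_1\in(1,c)$ and $p_2=\Theta(1/n)$. In $Q^n_{p_1}$, the branching-process comparison together with a concentration argument shows that whp $(1\pm o(1))\gamma(c_1)2^n$ vertices lie in components of size exceeding a large constant $k_0$, while only $o(2^n)$ vertices lie in components of intermediate size (between $k_0$ and a slowly growing threshold $k_1$); hence $(1-o(1))\gamma(c_1)2^n$ vertices lie in \emph{large} components of size at least $k_1$. The hypercube edge-isoperimetric inequality forces each large component to have $\Omega(k_1n)$ boundary edges in $Q^n$, almost all of them still unrevealed and a constant fraction of them reaching the bulk $A$ formed by the large components, so sprinkling the $p_2$-edges merges, whp, all large components into a single component of order $(1-o(1))\gamma(c_1)2^n$; being exponentially large it is unique. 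A routine monotonicity/continuity argument in $c_1\uparrow c$, combined with the first-moment upper bound $\mathbb{E}\,\#\{v:|C(v)|\ge k_1\}\le(1+o(1))\gamma(c)2^n$, then pins the giant at order $(1+o(1))\gamma(c)2^n$. Finally, conditioning on the giant, every remaining vertex explores a component whose growth is dominated by a subcritical branching process (for $Q^n$, with effective parameter $c(1-\gamma(c))<1$), so the subcritical analysis above applies and yields a second-largest component of order $\Theta(n)$.

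\emph{Main obstacle.} The delicate points are all in the supercritical regime. The branching-process comparison is only faithful while $|C(v)|\ll\sqrt n$, so controlling the count of vertices in large components, and showing that each large component has a large and well-distributed boundary, requires genuine isoperimetric/expansion input about $Q^n$ -- and one must also rule out that the bulk $A$ of large-component vertices is atypically clustered (e.g.\ inside a subcube) so that a positive proportion of each large component's boundary reaches it. Making the sprinkling quantitative, so that all large components reliably coalesce and nothing of order $\gg n$ survives outside the giant, and justifying cleanly that the non-giant part is effectively subcritical (best done by exploring non-giant components directly and dominating their growth), are the technical heart of the argument; the subcritical case and all the concentration steps are routine by comparison.
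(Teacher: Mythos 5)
First, note that Theorem \ref{t:cube} is quoted in this paper from \cite{AKS81,BKL92} rather than proved, so the fair comparison is with those proofs and with the paper's analogous argument for the permutahedron (Section \ref{s:perc}); at the level of skeleton (branching-process comparison, sprinkling, isoperimetric input) your outline is the right one. However, there is a genuine gap precisely at the step you defer to the ``main obstacle'' paragraph: the merging of large components. Your coupling with a Galton--Watson process is only faithful while the explored set has size $o(\sqrt n)$, so the clusters you can certify before sprinkling have size at most some slowly growing $k_1=o(\sqrt n)$. With sprinkling density $p_2=\Theta(1/n)$, a component of size $k_1$ has $\Theta(k_1 n)$ boundary edges, so even if a constant fraction of them reached the bulk $A$, the probability that it fails to attach is $\exp(-\Theta(k_1))$, while there are of order $2^n/k_1$ such components (and exponentially many component-respecting partitions to union over). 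The union bound therefore fails unless $k_1=\Omega(n)$ at the very least, and in fact the known arguments first manufacture much larger clusters: \cite{AKS81} iteratively grow clusters of size $\Omega(n^2)$, \cite{BKL92} substitute strong explicit isoperimetric estimates for the cube, and this paper's whole point is that projection-first search gives clusters of size $\exp(\Omega(m/\log^3 m))$ (Lemmas \ref{l:PFS}, \ref{l:PFSprecise}), which is what makes the partition union bound in Lemma \ref{l:partitions} close (there $r=n^{19}$ beats the $((n+1)!)^{a/r}$ count). Moreover, your assertion that ``a constant fraction of the boundary edges reach the bulk $A$'' is exactly the well-distribution statement you acknowledge but do not supply a mechanism for; the paper's analogues are Lemmas \ref{l:dense1} and \ref{l:dense2}, proved by running PFS inside disjoint projections, and without such a lemma $A$ could a priori avoid the neighbourhood of a given cluster.

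A second, smaller gap is the bound on the second-largest component: the duality heuristic ``condition on the giant, the rest is subcritical with effective parameter $c(1-\gamma(c))$'' is a $G(n,p)$ argument that does not transfer to $Q^n$ without justification (conditioning on the giant destroys edge independence and the complement is not a hypercube). In \cite{BKL92} this is where much of the isoperimetric work goes, and in the present paper the corresponding statement is Lemma \ref{l:unique large component}, proved by showing (via the density lemma) that any large connected set outside the bulk has many $\P{n}$-edges to it and must be swallowed in the second sprinkling round. So: correct high-level plan, correct subcritical case, but the supercritical regime as written would not go through -- the two places you label as the technical heart (growing clusters past the branching-process scale, and ruling out large components disjoint from the giant) are precisely the content of the cited proofs and of this paper's new technique, and they are not resolved by the proposal.
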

Note here, in comparison to Theorem~\ref{t:ER}, that $\log |Q^n|= \Theta(n)$.
Hence, we see a quantitatively similar phase transition in the component structure.
When $c <1$, which we call the \emph{subcritical regime}, whp all the components of the percolated subgraph are small, of logarithmic order, whereas when $c>1$, which we call the \emph{supercritical regime}, whp a giant component of linear order emerges, covering the same asymptotic proportion of the vertices as in Theorem~\ref{t:ER}~\eqref{i:ERsup}, and all other components are again of at most logarithmic order.

It can be seen that $\gamma \colon (1,\infty) \to \mathbb{R}$ is a continuous, increasing function and $\gamma(c) \to 1$ as $c\to \infty$, and so as $c$ increases, less and less of the percolated subgraph lies outside the giant component.
Hence, already when $p=\frac{\omega(1)}{n}$ the giant component covers all but a vanishing proportion of the vertices.
However, another classic result of Erd\H{o}s and R\'enyi~\cite{ER60} shows that we have to wait significantly longer until $G(n,p)$ becomes connected.

Indeed, a connected graph cannot have any isolated vertices and hence, since $K_n$ is an $(n-1)$-regular, $n$-vertex graph, we should not expect the graph to be typically connected until the expected number of isolated vertices $n(1-p)^{n-1}$ becomes small.
\begin{theorem}[\cite{ER60}]\label{t:ERconn}
  Let $p=p(n)$ and let $\lambda(n,p) =n(1-p)^{n-1}$.
  Then
  \[
    \lim_{n \to \infty}\mathbb {P}(G(n,p) \text{ is connected} ) = \begin{cases}
      0 &\text{ if }  \lambda \to  \infty;\\
      e^{-c} &\text{ if } \lambda  \to c\geq 0.\\
    \end{cases}
  \]
\end{theorem}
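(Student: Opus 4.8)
The plan is to show that, in the range of $p$ under consideration, the only obstruction to connectivity that typically occurs is an isolated vertex, and then to analyse isolated vertices by the method of moments. Write $X_0 = X_0(n,p)$ for the number of isolated vertices of $G(n,p)$, so that $\mathbb{E}[X_0] = n(1-p)^{n-1} = \lambda$. Since any connected graph on at least two vertices has no isolated vertex, $\mathbb{P}(G(n,p) \text{ connected}) \le \mathbb{P}(X_0 = 0)$ always holds, and the crux is to establish a matching lower bound.

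Consider first the case $\lambda \to \infty$. Then $p \to 0$ (otherwise $\lambda \to 0$), and computing $\mathbb{E}[X_0(X_0 - 1)] = n(n-1)(1-p)^{2n-3}$ one finds $\mathrm{Var}(X_0)/\mathbb{E}[X_0]^2 \to 0$, so Chebyshev's inequality gives $X_0 \ge 1$ whp, and hence $\mathbb{P}(G(n,p) \text{ connected}) \to 0$.

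Now suppose $\lambda \to c \ge 0$; this forces $p = (1+o(1))\tfrac{\log n}{n}$, so in particular $p \to 0$ and $np \to \infty$. For each fixed $r \ge 1$, the $r$-th factorial moment of $X_0$ is $\mathbb{E}\big[(X_0)_r\big] = (n)_r (1-p)^{r(n-1) - \binom{r}{2}} = \frac{(n)_r}{n^r}\,\lambda^r\,(1-p)^{-\binom{r}{2}} \to c^r$, which is the $r$-th factorial moment of a Poisson random variable of mean $c$. By the method of moments $X_0$ converges in distribution to $\mathrm{Po}(c)$, so $\mathbb{P}(X_0 = 0) \to e^{-c}$, and together with the trivial bound above this yields $\limsup_{n} \mathbb{P}(G(n,p) \text{ connected}) \le e^{-c}$.

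For the matching lower bound it suffices to show that whp $G(n,p)$ has no component of order $k$ for any $2 \le k \le n/2$, since then $\mathbb{P}(G(n,p) \text{ disconnected}) \le \mathbb{P}(X_0 \ge 1) + o(1)$ and we conclude $\liminf_n \mathbb{P}(G(n,p) \text{ connected}) \ge e^{-c}$. Bounding the probability that a fixed set of $k$ vertices spans a connected subgraph by the expected number of its spanning trees, the expected number of components of order $k \ge 2$ is at most $\sum_{k=2}^{\lfloor n/2\rfloor} \binom{n}{k} k^{k-2} p^{k-1} (1-p)^{k(n-k)}$, and using $np = (1+o(1))\log n$ one shows this is $o(1)$: the terms with $k$ bounded are $O\big((\log n)^k/n\big)$; for $k$ up to, say, $\sqrt{n}$, the correction $(1-p)^{-k^2} = e^{O(k^2 \log n/n)}$ is negligible; and for larger $k$ the factor $(1-p)^{k(n-k)} \le e^{-pk(n-k)}$, which is as small as $e^{-\Omega(n\log n)}$ near $k = n/2$, comfortably beats the crude bounds $\binom{n}{k} \le 2^n$ and $k^{k-2}p^{k-1}$. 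The main obstacle is precisely this last estimate: the sum must be split according to the size of $k$, and in each range one has to balance the entropy factor $\binom{n}{k} k^{k-2}$ against the ``boundary'' factor $(1-p)^{k(n-k)}$.
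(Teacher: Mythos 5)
The paper does not actually prove this statement --- it is quoted as a classical result of Erd\H{o}s and R\'enyi \cite{ER60} --- so there is no internal proof to compare against. Your strategy (reduce connectivity to the absence of isolated vertices by ruling out components of order $2\le k\le n/2$ via a spanning-tree union bound, then apply the method of moments to the number of isolated vertices) is the standard argument, and it is also exactly the template the paper itself follows for its permutahedron analogue, Theorem \ref{t:connectivitythreshold}, in Section \ref{s:conn}. The $\lambda\to\infty$ case and the factorial-moment computation for $\lambda\to c>0$ are correct as written.

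There is, however, a genuine gap in the case $c=0$. The assertion that $\lambda\to c\ge 0$ ``forces $p=(1+o(1))\frac{\log n}{n}$'' is false when $c=0$: $\lambda\to 0$ only gives $np\ge \log n - O(1)$, and is compatible with, say, $p=n^{-1/2}$ or $p$ a constant. This matters because your union bound over middle-sized components explicitly invokes $np=(1+o(1))\log n$ (e.g.\ in controlling the correction $(1-p)^{-k^2}$ and in balancing $\binom{n}{k}k^{k-2}p^{k-1}$ against $e^{-pk(n-k)}$), and the factorial-moment step uses $p\to 0$ to kill the factor $(1-p)^{-\binom{r}{2}}$; neither is justified in the full range $\lambda\to 0$. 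The repair is easy and worth stating: for $c=0$ only the lower bound $\mathbb{P}(\text{connected})\to 1$ is at stake, and since connectivity is monotone increasing you may couple $G(n,p)\supseteq G(n,p')$ with $p'=\min\{p,\,2\log n/n\}$, for which still $\lambda(n,p')\to 0$ and $np'=O(\log n)$, so your estimates apply to $G(n,p')$; alternatively, run the union bound for general $p$ after substituting $(1-p)^{k(n-k)}=\big(\lambda/n\big)^{k(n-k)/(n-1)}$ and keeping the factor $p^{k-1}$, which also closes the range. With either fix the proof is complete.
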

The corresponding result in $Q^d_p$ is originally due to Burtin~\cite{B77}, where the above heuristic suggests the threshold should occur when the expected number of isolated vertices, which can be seen to be $2^n(1-p)^n$, becomes small.
\begin{theorem}[\cite{ES79}]\label{t:Cubeconn}
  Let $p=p(n)$ and let $\lambda(n,p) = 2^n(1-p)^n$.
  Then
  \[
    \lim_{n \to \infty}\mathbb {P}(Q^d_p \text{ is connected} ) = \begin{cases}
      0 &\text{ if }  \lambda(n,p) \to \infty;\\
      e^{-c} &\text{ if }  \lambda(n,p) \to c \geq 0.\\
    \end{cases}
  \]
\end{theorem}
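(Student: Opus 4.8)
The plan is to localise the obstruction to connectivity at the isolated vertices. Write $N=2^n$; recall that $Q^n$ is $n$-regular, and note that in the only delicate regime, $\lambda\to c$ with $0<c<\infty$, one has $p\to\tfrac12$, and more precisely $1-p=\tfrac12\bigl(1+O(1/n)\bigr)$, while whenever $\lambda\not\to\infty$ we have at least $1-p\le\tfrac12+o(1)$. Let $X$ denote the number of isolated vertices of $Q^n_p$, so $\mathbb{E}X=N(1-p)^n=\lambda$. I would establish two statements: (a) $X$ behaves asymptotically like a Poisson random variable -- this yields both the limiting probability $e^{-c}$ and the case $\lambda\to\infty$, since there it forces $X\ge 1$ whp, hence disconnectedness; and (b) whp the only small components are the isolated vertices, i.e.\ whp $Q^n_p$ has at most one component of order at least $2$. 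Given (a) and (b), in the case $\lambda\to c$ we conclude
\[
  \mathbb{P}(Q^n_p \text{ connected}) = \mathbb{P}(X=0) - \mathbb{P}(X=0 \text{ and } Q^n_p \text{ disconnected}) \to e^{-c}.
\]

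\emph{Step (a).} The structural point is that if $u,v$ are \emph{non-adjacent} in $Q^n$ then the edge-sets incident to $u$ and to $v$ are disjoint, so $\{u\text{ isolated}\}$ and $\{v\text{ isolated}\}$ are independent; only $O(nN)$ of the $N^2$ ordered pairs of vertices are adjacent. Computing factorial moments, $\mathbb{E}[(X)_r]=\sum_{v_1,\dots,v_r\text{ distinct}}\mathbb{P}(v_1,\dots,v_r\text{ all isolated})$, the pairwise non-adjacent $r$-tuples contribute $(1+o(1))N^r(1-p)^{rn}=(1+o(1))\lambda^r$, while the remaining tuples contribute $O(n/N)\lambda^r=o(\lambda^r)$ for each fixed $r$. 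Hence if $\lambda\to c\ge 0$ then $\mathbb{E}[(X)_r]\to c^r$ for all $r$, so $X\xrightarrow{d}\mathrm{Po}(c)$ and $\mathbb{P}(X=0)\to e^{-c}$; and if $\lambda\to\infty$ then $\mathbb{E}X\to\infty$ while $\operatorname{Var}X=\mathbb{E}[(X)_2]+\mathbb{E}X-(\mathbb{E}X)^2=o\bigl((\mathbb{E}X)^2\bigr)$, so $X\ge 1$ whp by Chebyshev and $Q^n_p$ is disconnected whp. (The Chen--Stein method gives the same conclusion.) This disposes of $\lambda\to\infty$ and reduces $\lambda\to c$ to Step (b).

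\emph{Step (b): small components.} If $Q^n_p$ has two or more components of order at least $2$ then it has a component of order $k$ for some $2\le k\le N/2$, so it suffices to rule out components of every such order. A component of order $k$ is a connected vertex set $S$ with $|S|=k$ and no boundary edge present, and the events ``$Q^n_p[S]$ connected'' and ``no boundary edge of $S$ present'' are independent; hence
\[
  \mathbb{E}\bigl[\#\{\text{components of order }k\}\bigr] \le \#\{\text{connected }k\text{-subsets of }Q^n\}\cdot p^{\,k-1}\cdot\max_{|S|=k}(1-p)^{|\partial_e S|}.
\]
For the first factor I would use that a graph of maximum degree $n$ has at most $N(en)^{k-1}$ connected subsets of size $k$ (the usual count of subtrees through a fixed vertex), and for the exponent Harper's edge-isoperimetric inequality, $|\partial_e S|\ge |S|\bigl(n-\log_2|S|\bigr)$. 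Inserting $1-p\le\tfrac12+o(1)$ and summing, the terms decay geometrically and the total is $o(1)$ over the range $2\le k\le N/n^2$, say; some care splitting this range of $k$ is needed so that the competing exponential factors balance (in particular $\log_2 k$ must not be overestimated when $k$ is small).

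\emph{The main obstacle: macroscopic components, $N/n^2\le k\le N/2$.} Here the first-moment bound above fails, because $Q^n$ has $\exp(\Theta(N))$ connected sets of order $\Theta(N)$, far too many to match against $(1-p)^{|\partial_e S|}$, which is only $\exp(-\Theta(N))$ for subcube-like $S$. The rescue is that a \emph{typical} set of order $\Theta(N)$ has edge-boundary of order $\Theta(nN)$, so the sets relevant here -- those with edge-boundary only $O(N)$ -- are exponentially rarer than $\binom{N}{k}$. Making this quantitative requires a container-type, stability strengthening of Harper's inequality bounding the number of subsets of $Q^n$ of a prescribed order whose edge-boundary is near the isoperimetric minimum, of the kind developed by Sapozhenko; fed into the first moment this controls the macroscopic range and completes the proof. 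I expect this step to be the main difficulty: the hypercube is only a weak vertex-expander (its second adjacency eigenvalue is $n-2$), so spectral or Cheeger-type lower bounds on $|\partial_e S|$ are worthless in this range, and one is forced to work with the exact isoperimetric profile and its stability.
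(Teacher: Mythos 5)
Your overall decomposition (Poisson limit for isolated vertices, plus "whp at most one component of order at least $2$") is the right one, and Steps (a) and the first part of (b) are sound: the factorial-moment computation is correct since non-adjacent vertices have disjoint incident edge sets, and for $2\le k\le 2^n/n^2$ the bound $2^n(en)^{k-1}p^{k-1}(1-p)^{k(n-\log_2 k)}$ does sum to $o(1)$ once one tracks the $(1-p)^{kn}=\lambda^k 2^{-kn}$ factor carefully. The genuine gap is exactly where you flag it: in the macroscopic range $2^n/n^2\le k\le 2^{n-1}$ the argument does not close. The ``container-type, stability strengthening of Harper'' is invoked with no statement, no quantitative form, and no proof or precise citation, yet it is the heart of this route: one must show that the number of connected $k$-sets with edge boundary $O(k)$ is $2^{o(k)}$ (or at least small enough to beat a probability gain of only $2^{-\Theta(k)}$ from the absent boundary edges plus the spanning tree), and such Sapozhenko-style enumeration of near-extremal sets is a substantial piece of work in its own right, not a routine consequence of anything like Theorem~\ref{th: Harper} or a spectral bound. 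As written, the proposal proves the theorem only modulo this unproved lemma.

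It is worth noting that this hard step can be avoided entirely, and that is how the paper treats its own analogue (the cube statement itself is cited, not proved there). For Theorem~\ref{t:connectivitythreshold} the paper never counts large sets with small boundary: it first establishes (Remark~\ref{r:unique large component}, via the sprinkling Lemmas~\ref{l:partitions} and~\ref{l:unique large component}, whose union bound runs over \emph{component-respecting partitions} of the set of vertices in large clusters -- of which there are only $\exp(\mathrm{poly}(n)\cdot a/r)$ -- rather than over all vertex subsets) that whp there is a unique component of order exceeding $Cn\log n$ for \emph{any} supercritical $p$, and then a first-moment-plus-Harper bound as in Lemma~\ref{l:no medium components} only has to exclude components of order $2\le k\le 2^{n/4}$, where $n-\log_2 k\ge 3n/4$ makes the calculation easy. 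The same two-step strategy works verbatim for $Q^n_p$ (using Theorem~\ref{t:cube} or a sprinkling argument, as the paper notes was done by Diskin--Krivelevich and Diskin--Geisler). So either supply the macroscopic counting lemma precisely (with proof or an exact reference sufficient for this application), or replace that step by the component-uniqueness-plus-sprinkling argument; without one of these, the proposal has a real hole.
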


In this paper we study the evolution of the structure of a random subgraph of a different graph, the $n$-dimensional \emph{permutahedron}.
The permutahedron is a well-studied combinatorial object which has multiple equivalent representations, perhaps the simplest of which is its representation as a (left-multiplication) Cayley graph of the symmetric group\footnote{We adopt the convention that, for $\pi, \sigma \in S_{n+1}$, $\sigma\pi$ is the permutation given by $(\sigma\pi)(i) = \sigma(\pi(i))$.} $S_{n+1}$, generated by the adjacent transpositions.
\begin{definition}\label{def:permutahedron}
  The \emph{$n$-dimensional permutahedron}, denoted by $\P{n}$, is the graph on vertex set $S_{n+1}$ with
  \[
    E(\P{n})=\bigl\{\,\{\pi, \tau_i\pi\} : \pi \in S_{n+1}\,,\, 1 \leq i \leq n\,\bigr\},
  \]
  where $\tau_i$ is the transposition $(i, i+1)$.
\end{definition}
In other words, two permutations are connected by an edge if they differ by swapping two values that differ by $1$.
We note that the generating set of the Cayley graph above are precisely the generators in the presentation of $S_{n+1}$ as a \emph{Coxeter group} (for background on Coxeter groups, see e.g.,~\cite{D08}).

However, $\P{n}$ has a number of other equivalent definitions.
It is the $1$-skeleton of a polytope $P(n)$, which can be described as the convex hull of the points in $\mathbb{R}^{n+1}$ whose coordinates are given by the word representations of all permutations in $S_{n+1}$, but which can also be seen to be a \emph{zonotope}, a Minkowski sum of line segments.
Note that, since $P(n)$ lives in an affine hyperplane of $\mathbb{R}^{n+1}$, it has intrinsic dimension $n$.
See~\cite{Z95} for more on zonotopes and convex polytopes in general.
The permutahedron can also be defined as the covering graph of a particular lattice, the \emph{weak order} or \emph{weak Bruhat lattice}, on $S_{n+1}$.

A weak analogy can be drawn here with the $n$-dimensional hypercube, which also has numerous equivalent representations, as the standard Cayley graph of the group $\mathbb{Z}_2^n$, with generators which represent the group as a Coxeter group, as the $1$-skeleton of the hypercube polytope, which is also a zonotope, or as the covering graph of the lattice of subsets of $[n]$.
As with the hypercube, due to its many equivalent representations, the permutahedron arises naturally in a variety of combinatorial contexts.
However, whilst the graph theoretical properties of the hypercube have been well-studied, from both the probabilistic and the extremal viewpoint~\cite{HHW88}, the permutahedron has mostly been considered in the context of algebraic and enumerative combinatorics, and much less is known about the structure of the permutahedron as a graph.

One reason that it is perhaps natural to study bond percolation on the permutahedron, is in the context of percolation on polytopes.
Indeed, both $K_n$ and $Q^n$ are $1$-skeletons of particularly symmetric polytopes, the $n$-simplex and the $n$-cube, respectively, and in some sense these are perhaps the most natural polytopes on which to study this percolation model since they are \emph{simple}\footnote{That is, a $d$-dimensional polytope in which each vertex is in exactly $d$ edges.}, and so the polytope is (combinatorially) determined by its $1$-skeleton~\cite{BML87,K88}, and \emph{regular}\footnote{That is, a polytope whose symmetry group acts transitively on its flags.}, and hence as symmetric as possible.
Apart from some sporadic examples in low dimensions, there is only one other infinite family of regular polytopes, the \emph{cross-polytope}, which is not simple, and whose $1$-skeleton is the \emph{cocktail party graph}, which is a complete graph with a perfect matching removed.
However, since this graph is so close to a clique, similar methods as in~\cite{ER60} can be used to show analogues of Theorems~\ref{t:ER} and~\ref{t:ERconn}.

Whilst the permutahedra are not regular polytopes, they are a family of \emph{uniform}\footnote{That is, a polytope whose symmetry group acts transitively on its vertices.} simple polytopes in each dimension, and so another natural family of highly symmetric polytopes whose combinatorial structure is determined by their $1$-skeletons.

Our first main result is that the percolated permutahedron undergoes a quantitatively similar phase transition around the point $\frac{1}{n}$ as the binomial random graph $G(n,p)$.
\begin{theorem}\label{t:percolationthreshold}
  Let $c>0$ be a constant and let $p=\frac{c}{n}$.
  Then, whp,
  \begin{enumerate}[$(i)$]
  \item\label{i:subcritical} if $c <1$, the largest component of $\P{n}_p$ has order $\Theta(n\log n)$; and
  \item\label{i:supercritical} if $c > 1$, there exists a giant component in $\P{n}_p$ of order $(1+o(1))\gamma(c) (n+1)!$, where $\gamma$ is defined according to~\eqref{e:survival-prob}, and the second-largest component has order $\Theta(n\log n)$.
  \end{enumerate}
\end{theorem}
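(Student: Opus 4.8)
The plan is to adapt, step by step, the strategy developed for the hypercube by Ajtai, Koml\'{o}s and Szemer\'{e}di and by Bollob\'{a}s, Kohayakawa and \L uczak \cite{AKS81, BKL92}, treating the subcritical range $c<1$ and the supercritical range $c>1$ separately. The guiding heuristic is that $\P{n}$ is a connected, vertex-transitive, $n$-regular, triangle-free graph on $(n+1)!$ vertices in which any two vertices have at most two common neighbours and few short cycles pass through each edge, so that locally it resembles an $n$-ary tree and $\log\lvert V(\P{n})\rvert=(1+o(1))n\log n$; the order $\Theta(n\log n)$ of the small components should be read as $\Theta(\log\lvert V\rvert)$, exactly as $\Theta(\log n)$ and $\Theta(n)$ play that role in Theorems~\ref{t:ER} and~\ref{t:cube}.

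\emph{Subcritical range.} For the upper bound, a breadth-first exploration from a vertex $v$ reveals at most $n-1$ new vertices per step, so $\lvert C(v)\rvert$ is stochastically dominated by the total progeny of a Galton--Watson process with $\mathrm{Bin}(n-1,c/n)$ offspring; this is subcritical, its total progeny has an exponential tail, and a union bound over the $(n+1)!$ vertices shows that whp every component has order at most $Kn\log n$ for a suitable $K=K(c)$. For the matching lower bound I would run the second moment method on the number of vertices in components of order exactly $k=\delta n\log n$ for small $\delta=\delta(c)$; bounding $\mathbb{P}(\lvert C(v)\rvert=k)$ from below by $\sum_S p^{k-1}(1-p)^{\partial_e(S)}$ over vertex sets $S\ni v$ of size $k$ inducing a path in $\P{n}$ (so that $\partial_e(S)=nk-2(k-1)$), this reduces to showing that $\P{n}$ contains at least $n^{k(1-o(1))}$ induced paths of length $k$ through $v$ --- i.e.\ that only a lower-order term of the exponent is lost compared with an $n$-ary tree. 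This rests on controlling when extending a chordless path in $\P{n}$ by one edge creates a chord, via the scarcity of short cycles; the accompanying variance estimate is then routine, since components of order $O(n\log n)$ are negligible next to $(n+1)!$.

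\emph{Supercritical range.} Fix a threshold $L=L(n)$ growing fast (say exponentially in $n$) with $L=o((n+1)!)$. The argument has four parts. (a) \emph{Cluster exploration}: the exploration from $v$ either dies, with probability $(1+o(1))(1-\gamma(c))$, or reaches size $L$, with probability $(1+o(1))\gamma(c)$, while the event that it neither dies before discovering $\kappa n\log n$ vertices nor reaches size $L$ has probability $o(1)$. Domination by a supercritical Galton--Watson process (offspring $\mathrm{Bin}(n-1,c/n)$, survival probability $\gamma_n\to\gamma(c)$) gives the upper bound on survival; the lower bound --- producing an exponentially large cluster in a graph that is \emph{not} locally a tree --- is where a new exploration argument enters, showing that over the first $L$ steps the explored set spans only $(1+o(1))$ times its order in edges, so collisions are rare and the effective branching stays supercritical. (b) \emph{Concentration}: a first-moment computation gives $\mathbb{E}\,\#\{v:\lvert C(v)\rvert\ge L\}=(1+o(1))\gamma(c)(n+1)!$, which a bounded-differences inequality concentrates. (c) \emph{Uniqueness}: write $\P{n}_p=\P{n}_{p_1}\cup\P{n}_{p_2}$ with $p_2$ a small independent sprinkle; after exposing $\P{n}_{p_1}$ the components of order $\ge L$ cover $(1+o(1))\gamma(c)(n+1)!$ vertices, and an isoperimetric inequality giving each such component $C$ an edge-boundary of order $n\lvert C\rvert$ lets the sprinkled edges merge them all into a unique giant. (d) \emph{Small components}: the same first-moment bound together with the isoperimetric inequality excludes components of order between $\kappa n\log n$ and $L$, so outside the giant every component has order $O(n\log n)$, and a second moment argument (on components disjoint from the giant) yields the matching $\Omega(n\log n)$ lower bound for the second largest component.

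\emph{Main obstacles.} I expect the two hard points to be the cluster-exploration lower bound in (a) and the isoperimetric input to (c) and (d). In (a) the exploration genuinely departs from a branching process because the girth of $\P{n}$ is only $4$; one must show --- using the local sparsity of $\P{n}$ (few short cycles, any two vertices with at most two common neighbours) --- that the explored set remains almost a tree over the first $L$ steps, so the process survives long enough to build a cluster of order $L$. This is the novel graph exploration technique announced in the abstract. For (c) and (d) one wants $\partial_e(S)=\Omega(n\lvert S\rvert)$, or at least a workable lower bound, for all sets $S$ of the relevant orders; since percolation clusters need not be geometrically fat, a version of the inequality tailored to the sets that actually arise, or a more robust sprinkling scheme, may be needed --- this is the study of the isoperimetry of $\P{n}$ announced in the abstract. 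The subtree-counting in the subcritical lower bound is of the same flavour and draws on the same local structural facts.
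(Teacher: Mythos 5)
Your skeleton (subcritical domination plus union bound; supercritical: grow large clusters with probability $\gamma(c)-o(1)$, concentrate via bounded differences, sprinkle to merge, exclude intermediate components) matches the paper's, but the places you yourself flag as the hard points are genuine gaps, and the justifications you sketch would not close them. The most serious is step (a): the claim that over the first $L=e^{\Theta(n)}$ exploration steps the explored set spans only $(1+o(1))$ times its order in edges is essentially the conclusion you need, and it does not follow from girth-$4$-with-few-short-cycles or the two-common-neighbours property. A cluster of that size is explored to depth far exceeding $n$, the explored set is a connected set crowded around its starting point rather than a sparse uniformly spread set, and controlling collisions by such local facts is exactly where arguments in the style of \cite{AKS81,BKL92,DEKK22} stall: they reach clusters of size $n^{O(1)}$, or at best $n^{o(n^{1/3})}$, far short of exponential. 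The paper's substitute is structural rather than probabilistic: a projection lemma (any $k\leq m$ vertices of an $m$-dimensional face graph lie in vertex-disjoint face subgraphs of dimension at least $m+1-k$, proved via the Coxeter coset structure of $S_{n+1}$, equivalently via the zonotope/partial-cube structure), and a ``projection-first search'' in which each frontier vertex is explored only inside its own private face subgraph. Collisions are then impossible by construction, and the dimension available to a vertex decreases linearly in its \emph{depth} rather than in the cluster size, so the exploration couples with a supercritical branching process for $\Theta(n)$ generations and every vertex lies in a cluster of size $e^{\Omega(n)}$ with probability $\gamma(c)-o(1)$. Without some device of this kind, (a) has no proof.

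Three further points. (i) The isoperimetric input to (c) is false as stated: $\partial_e(S)=\Omega(n|S|)$ fails already for a facet (a copy of $\P{n-1}$, with $n!$ vertices and edge boundary exactly $|S|$), and the set $\{\sigma:\sigma^{-1}(1)\leq \frac{n+1}{2}\}$ shows $i(\P{n})\leq \frac{2}{n+1}$; Harper-type expansion $|S|(n-\log_2|S|)$ holds (via the partial-cube embedding) only for $|S|\leq n!$ and is strong only for $|S|=2^{o(n)}$. The paper merges large clusters using only the weak global bound $i(\P{n})=\Omega(n^{-2})$ (Bacher's spectral gap plus Cheeger), made usable by a density lemma -- whp every vertex is within distance two of a cluster of size $\geq n^{19}$ -- so that a component-respecting partition of the large-cluster set extends to a partition of all of $V(\P{n})$, whose crossing edges extend to short paths sprinkled at density $p_3=\Omega(n^{-2})$. (ii) Your step (d), first moment plus isoperimetry over connected sets of size $k\in[\kappa n\log n,L]$, does not close for $c$ near $1$: there are roughly $(n+1)!(en)^{k}$ connected $k$-sets, so with $\partial_e(S)\geq\delta nk$ the per-vertex exponent is $1+\log c-c\delta$, which is positive near $c=1$ unless $\delta=1-o(1)$; that forces you into the regime $k=2^{o(n)}$ and a delicate tuning of $L$ with $c$. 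The paper avoids this entirely: by a second density lemma, any component of size $\geq Cn\log n$ avoiding the first-round large clusters has $\Omega(n^2\log n)$ edges into them and is absorbed whp by the $p_2$-sprinkle. (iii) Your lower bound (subcritical, and for the second-largest component) needs at least $(\epsilon n)^{k}$ induced paths of length $k=\Theta(n\log n)$ through a vertex, i.e.\ only a constant-factor per-step loss; estimating the number of such paths is not routine -- the paper explicitly poses it as an open problem -- precisely because a path has depth $k\gg n$, so chord-avoidance and short-cycle scarcity do not control the accumulated self-avoidance constraints. The paper instead counts \emph{tree} components of order $m=\alpha n\log n$, embedding typical trees (depth $O(\sqrt m\log\log m)$, maximum degree $O(\log m/\log\log m)$) by projection-first search; the shallowness of the trees is what keeps the per-vertex loss at $(1-o(1))$, and paths cannot be handled this way.
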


Note that, $\log |\!\P{n}| = \Theta(n \log n)$, and so Theorem~\ref{t:percolationthreshold} is again quantitatively similar to Theorems~\ref{t:ER} and~\ref{t:cube}.
In the language of~\cite{DEKK22} we say that the permutahedron exhibits the \emph{Erd\H{o}s--R\'{e}nyi component phenomenon}.
Here, the fact that the order of $\P{n}$ is superexponential in its regularity causes significant difficulties compared to the case of the complete graph or hypercube.
The proof of Theorem~\ref{t:percolationthreshold} utilises a novel exploration process, which we call \emph{projection-first search} (see Section~\ref{s:PFS}), which is applicable to many high-dimensional geometric graphs, and significantly strengthens and simplifies the analysis of the distribution of `large' clusters after percolation in these graphs.
Furthermore, this process can be used to effectively enumerate small subgraphs in such graphs, which is useful for determining the existence of logarithmic sized components in both regimes.

We also consider the connectivity threshold in this model.
Note that, since $\P{n}$ is an $n$-regular, $(n+1)!$-vertex graph, the expected number of isolated vertices in $\P{n}_p$ is $(n+1)!\,(1-p)^n$.
\begin{theorem}\label{t:connectivitythreshold}
  Let $p=p(n)$ and let $\lambda(n,p) = (n+1)!\,(1-p)^n$.
  Then
  \[
    \lim_{n \to \infty}\mathbb {P}(\P{n}_p \text{ is connected} ) = \begin{cases}
      0 &\text{ if }  \lambda(n,p) \to \infty; \\
      e^{-c} &\text{ if }  \lambda(n,p) \to c \geq 0.\\
    \end{cases}
  \]
\end{theorem}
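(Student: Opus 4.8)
The plan is to follow the classical strategy for connectivity thresholds: show that whp the obstruction to connectivity is precisely the presence of isolated vertices, and then compute the distribution of the number of isolated vertices via the method of moments. For the first moment, if $X$ denotes the number of isolated vertices in $\P{n}_p$, then $\mathbb{E}[X] = (n+1)!(1-p)^n = \lambda(n,p)$ by $n$-regularity and vertex-transitivity. For the second moment, I would note that two distinct vertices $\pi, \sigma \in S_{n+1}$ are at distance $\ge 2$ unless they are adjacent, in which case they share exactly the edge between them; hence the number of ordered pairs of adjacent vertices is $(n+1)! \cdot n$, and for such a pair the probability both are isolated is $(1-p)^{2n-1}$, while for non-adjacent pairs (the vast majority) their neighbourhoods are disjoint and the events are independent. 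This gives $\mathbb{E}[X(X-1)] = (n+1)!\big((n+1)!-n-1\big)(1-p)^{2n} + (n+1)! \, n \, (1-p)^{2n-1} = (1+o(1))\lambda^2$ when $\lambda \to c$, so by the standard Chen–Stein or second-moment argument $X$ converges in distribution to $\mathrm{Po}(c)$, giving $\mathbb{P}(X=0) \to e^{-c}$. When $\lambda \to \infty$, Chebyshev (or the first-moment lower bound on $\mathbb{P}(X \ge 1)$) gives $\mathbb{P}(X \ge 1) \to 1$, so $\P{n}_p$ is whp disconnected.

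The substantive part is the upper bound in the regime $\lambda \to c$ (equivalently $\lambda = O(1)$, which pins down $p$ up to lower-order terms: since $(n+1)!(1-p)^n = O(1)$ we get $p = \frac{\log((n+1)!)}{n} + O\!\big(\tfrac{1}{n}\big) = (1+o(1))\log n$, i.e.\ $p$ is logarithmically large — wait, $p \le 1$, so in fact $pn = \log((n+1)!) + O(1) = (1+o(1)) n \log n$, meaning $p = (1+o(1))\log n$; since we need $p \in (0,1)$ this forces $n$ to be such that $\log((n+1)!)/n < 1$, which never holds — so one should rather think of the appropriate range via a coupon-collector-type scaling). I would instead argue: conditioned on there being no isolated vertices, show $\P{n}_p$ is connected whp. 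The strategy is to bound, for each $2 \le k \le \frac{|\P{n}|}{2}$, the probability that there exists a component of order exactly $k$. Write this as a sum over connected vertex subsets $S$ with $|S|=k$ of $(1-p)^{e(S, S^c)}$, where $e(S,S^c)$ is the edge boundary. The key input here is isoperimetry: one needs a lower bound on $e(S,S^c)$ (equivalently on the vertex boundary, since the graph is $n$-regular) in terms of $|S|$ — this is exactly the "isoperimetric properties of the permutahedron" the abstract promises. For small $k$ (say $k = O(n)$), a crude bound $e(S,S^c) \ge n/2 \cdot k$ or so, combined with a bound on the number of connected $k$-subsets containing a fixed vertex (at most $(en)^{k}$ by the standard tree-counting argument, using $n$-regularity), suffices once $p$ is of order $\log n / n$ scaled appropriately. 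For large $k$, one invokes the permutahedron's edge-isoperimetric inequality to guarantee a boundary of size $\omega(\log((n+1)!))$, making $(1-p)^{e(S,S^c)}$ negligible even after the union bound over at most $2^{(n+1)!}$ sets.

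The main obstacle will be establishing a sufficiently strong isoperimetric inequality for $\P{n}$ across the full range of set sizes — in particular a robust lower bound on the edge boundary of mid-sized sets, analogous to Harper's theorem for the hypercube but for the permutahedron, where the Cayley-graph structure over $S_{n+1}$ makes the extremal sets and the martingale/compression arguments considerably less transparent. A secondary technical point is handling components of intermediate size $k$ that are too large for the naive tree-counting bound but too small for the asymptotic isoperimetric profile to bite; here one typically needs a careful multi-scale union bound, splitting $[2, |\P{n}|/2]$ into $O(\log\log)$ many dyadic ranges and optimising the isoperimetric estimate in each. Once the isoperimetry is in hand, the rest is a routine (if lengthy) union-bound computation, and combining it with the first-/second-moment analysis of isolated vertices above yields the stated dichotomy.
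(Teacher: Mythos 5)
Your overall skeleton (whp connectivity reduces to the absence of isolated vertices, then a Poisson limit for their number) matches the paper, and your treatment of small and medium components --- tree-counting at most $(n+1)!(en)^{k-1}$ connected $k$-sets against a Harper-type boundary bound --- is essentially the paper's Lemma~\ref{l:no medium components}, which uses Proposition~\ref{p:edge-iso-general} to exclude components of size $2\le k\le 2^{n/4}$. But there are two genuine problems. First, you never pin down the threshold regime: $\lambda=(n+1)!(1-p)^n\to c$ forces $-n\log(1-p)=\log((n+1)!)+O(1)$, hence $1-p=\Theta(1/n)$ and $p=1-\Theta(1/n)$; your tentative ``$p\approx\log n$'' conflates $p$ with $-\log(1-p)$ and is left unresolved, so the later calibration ``once $p$ is of order $\log n/n$'' is off.

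Second, and more seriously, your plan for large components cannot be repaired by proving a stronger isoperimetric inequality, because the inequality you would need is false. To beat the union bound over connected sets of size $k$ you need $(1-p)^{|\partial S|}=\exp\bigl(-\Theta(|\partial S|\log n)\bigr)$ to dominate $\exp(O(k\log n))$, i.e.\ $|\partial S|=\Omega(k)$ for all $k\le (n+1)!/2$; and your stronger claim that a boundary of size $\omega(\log((n+1)!))$ defeats a union bound over $2^{(n+1)!}$ sets is arithmetically nowhere near sufficient. However $i(\P{n})\le 2/(n+1)$ (the half-space $\{\sigma:\sigma^{-1}(1)\le\frac{n+1}{2}\}$ has boundary $n!$), so $|\partial S|=\Omega(|S|)$ fails badly for large sets, and the only general lower bound available is the Cheeger-type $i(\P{n})=\Omega(n^{-2})$, which loses a factor of order $n^2$ and more against the enumeration. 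This is precisely why the paper abandons the Sapo\v{z}enko--Burtin route: it instead imports the supercritical component structure from the proof of Theorem~\ref{t:percolationthreshold}. Remark~\ref{r:unique large component} (a sprinkling argument built on projection-first search, where the weak bound $i(\P{n})=\Omega(n^{-2})$ is used only to connect already-grown clusters of size $n^{19}$, never in a union bound over all large sets) gives whp a unique component of order at least $Cn\log n$; combined with Lemma~\ref{l:no medium components} this shows every non-isolated vertex lies in one common component, after which your moment computation for isolated vertices finishes the proof. Without such a substitute (or a genuinely new container-type argument for connected sets with small boundary), your large-$k$ step is a gap.
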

Both the superexponential order of $|\!\P{n}|$ and the lack of explicit control of the number of subgraphs and their expansion properties make it hard to argue as in the proof of Theorem~\ref{t:Cubeconn}.
Here, instead, we give a novel proof of the connectivity threshold which uses information about the component structure and the distribution of vertices in the giant component derived in the proof of Theorem~\ref{t:percolationthreshold}, which again should have applications to similar questions in other percolation models.
Using standard methods, it is easy to give a corresponding hitting time result.
We note that a similar approach to the connectivity threshold in $Q^n$ and other high-dimensional product graphs was recently developed by Diskin and Krivelevich~\cite{DK24} and Diskin and Geisler~\cite{DG24}.

As a tool to prove Theorems~\ref{t:percolationthreshold} and~\ref{t:connectivitythreshold}, but also as an interesting problem in its own right, we consider the \emph{isoperimetric properties} of $\P{n}$.
In many contexts, the isoperimetric properties of the host graph $G$ have been key to understanding the component structure of the percolated subgraph $G_p$~\cite{AKS81,BKL92,DEKK22,FKM04}, and in turn the isoperimetric properties of the percolated subgraph $G_p$ have been key to understanding its internal structure~\cite{BKW14, DEKK23+,DLP14,EKK22}.

Given a graph $G$, we define
\[
  i_k(G) := \min_{S\subseteq V(G), |S|=k}\frac{|\partial(S)|}{|S|},
\]
where $\partial(S)$ is the edge boundary of $S$, and the \emph{edge-isoperimetric constant}, sometimes known as the \emph{Cheeger constant}, given by
\[
  i(G) := \min_{1 \le k\le |V(G)|/2} i_k(G).
\]
For a general graph, determining $i(G)$ or even $i_k(G)$ is known to be a computationally hard problem~\cite{GJS76} but the exact value, or asymptotics, have been determined for many families of lattice-like graphs~\cite{BI91,AB95,L15,BE18}.

The isoperimetric problem does not seem to have been well-studied in $\P{n}$, despite it being mentioned as an open problem in the monograph of Harper~\cite{H04}.
Nevertheless, due to the many explicit representations of the permutahedron and its high level of symmetry, results in the literature can be used to give an essentially optimal bound for the expansion of small sets, as well as a general bound on the isoperimetric constant of $\P{n}$.

\begin{proposition}\label{p:edge-iso-general}
  The edge-isoperimetric constant of $\P{n}$ satisfies
  \[
    \iso\bigl(\P{n}\bigr) = \Omega\Bigl(\frac{1}{n^2}\Bigr).
  \]
  Moreover, for every $k\in [2^n]$,
  \[
    \iso_k\bigl(\P{n}\bigr) \geq n - \log_2 k.
  \]
\end{proposition}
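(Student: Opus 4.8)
The two bounds are essentially independent of one another, and I would prove each by a short argument reducing to a known result -- the general bound to the spectral gap of the adjacent-transposition walk together with Cheeger's inequality, and the bound for small sets to the realisation of $\P{n}$ as an induced subgraph of a hypercube together with Harper's theorem.

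For $\iso(\P{n})=\Omega(n^{-2})$, note that the adjacency matrix of $\P{n}$ is $nP$, where $P=\tfrac1n\sum_{i=1}^{n}\tau_{i}$ is the transition matrix of the nearest-neighbour transposition shuffle on $S_{n+1}$. The spectral gap $1-\lambda_{2}(P)$ of this walk is of order $n^{-3}$: by the Aldous spectral gap theorem (Caputo--Liggett--Richthammer) it equals the spectral gap of the single-particle random walk on the path $P_{n+1}$, which is $\Theta(n^{-3})$; alternatively a Diaconis--Saloff-Coste comparison with the random transposition walk, writing each transposition as a product of at most $2n$ adjacent ones, supplies the lower bound $\Omega(n^{-3})$, which is all that is needed. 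Hence the second largest adjacency eigenvalue of $\P{n}$ equals $n-\Theta(n^{-2})$, and the easy direction of the discrete Cheeger inequality -- for a connected $d$-regular graph $G$ one has $\iso(G)\ge\tfrac12\bigl(d-\lambda_{2}(A_{G})\bigr)$ -- gives $\iso(\P{n})=\Omega(n^{-2})$.

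For $\iso_{k}(\P{n})\ge n-\log_{2}k$, I would use the description of $\P{n}$ as the covering graph of the weak order. Identifying each $\pi\in S_{n+1}$ with its inversion set $\mathrm{Inv}(\pi)=\{(a,b)\colon a<b,\ \pi^{-1}(a)>\pi^{-1}(b)\}$ embeds $V(\P{n})$ into $\{0,1\}^{N}$, where $N=\binom{n+1}{2}$. Two permutations are adjacent in $\P{n}$ if and only if their inversion sets differ in exactly one coordinate: one direction is immediate, since $\mathrm{Inv}(\pi\tau_{i})$ is obtained from $\mathrm{Inv}(\pi)$ by toggling the pair of values occupying positions $i,i+1$; for the other, if two inversion sets differ in one coordinate then one contains the other and the longer has exactly one more inversion, so the two permutations form a covering pair of the weak order and hence an edge of $\P{n}$. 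Thus $\P{n}$ is exactly the subgraph of $Q^{N}$ induced on the set of inversion sets, and so $e(\P{n}[S])=e(Q^{N}[S])$ for every $S\subseteq V(\P{n})$. Rewriting Theorem~\ref{th: Harper} via $|\partial_{Q^{N}}(S)|=N|S|-2\,e(Q^{N}[S])$ yields $e(Q^{N}[S])\le\tfrac12|S|\log_{2}|S|$, and since $\P{n}$ is $n$-regular, $|\partial_{\P{n}}(S)|=n|S|-2\,e(\P{n}[S])\ge|S|\bigl(n-\log_{2}|S|\bigr)$, which is the claimed bound (in fact for every $k$, not only $k\le n!$).

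Both arguments are routine given their inputs, so the real work lies in selecting and justifying those inputs. For the general bound the only substantial ingredient is the $\Theta(n^{-3})$ spectral gap; if one wishes to avoid the Aldous conjecture, the comparison argument needs a careful congestion estimate for the canonical sorting paths, and pushing that multicommodity-flow argument through directly on $\P{n}$ -- using that each generator is used at most $n$ times in the bubble-sort word of any permutation -- in fact yields the sharper bound $\iso(\P{n})=\Omega(n^{-1})$, which is best possible since the set of permutations placing the value $1$ among the first $\lfloor(n+1)/2\rfloor$ positions has edge boundary $n!$ and hence isoperimetric ratio $\Theta(n^{-1})$; I would keep the weaker $\Omega(n^{-2})$ as the statement, as it is the more readily citable bound. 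For the small-set bound the one point deserving a line is the verification that the inversion-set map gives an \emph{induced} subgraph of $Q^{N}$, which is immediate from the standard characterisation of inversion sets. I would also note that this bound is essentially optimal: choosing $\lfloor(n+1)/2\rfloor$ pairwise disjoint adjacent transpositions exhibits an induced copy of $Q^{\lfloor(n+1)/2\rfloor}$ in $\P{n}$, so $\iso_{k}(\P{n})=n-\log_{2}k$ exactly whenever $k=2^{d}$ with $d\le\lfloor(n+1)/2\rfloor$.
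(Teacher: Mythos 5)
Your proof is correct and takes essentially the same route as the paper: the first bound is Cheeger's inequality (Lemma \ref{l:cheeger}) applied to the $\Theta(n^{-2})$ spectral gap of the Laplacian of $\P{n}$ (the paper cites Bacher's exact eigenvalue $2-2\cos\left(\pi/(n+1)\right)$ where you invoke Caputo--Liggett--Richthammer or a Diaconis--Saloff-Coste comparison, an equivalent input), and the second bound is exactly the paper's argument that the inversion-set map embeds $\P{n}$ into $Q^{\binom{n+1}{2}}$ edge-preservingly (the paper phrases this as $\P{n}$ being a partial cube), after which Harper's theorem (Theorem \ref{th: Harper}) transfers via $n$-regularity just as you spell out. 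One caution on your aside: the claim that a routine congestion estimate for bubble-sort canonical paths yields the sharper bound $\iso(\P{n})=\Omega(n^{-1})$ is an overclaim -- it would require a maximum-congestion bound of order $n\cdot(n+1)!$ that the fact you quote does not supply, and the paper explicitly poses the correct polynomial exponent between $n^{-2}$ and $n^{-1}$ as an open question -- though this does not affect your proof of the proposition as stated.
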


The first bound here is a straightforward consequence of known bounds on the spectrum of the permutahedron~\cite{B94} combined with (a combinatorial form of) Cheeger's inequality~\cite{AM85}, which relates the expansion of a graph to its spectrum.
The second bound mirrors the classic edge-isoperimetric inequality for the hypercube due to Harper~\cite{H64}, and follows from that fact that $\P{n}$ is a \emph{partial cube}, an isometric subgraph of a hypercube, see \Cref{l:zonotopepartialcube}.

The paper is structured as follows.
In Section~\ref{s:prelim} we introduce some notation and give some preliminary lemmas.
In Section~\ref{s:iso} we discuss the isoperimetric properties of $\P{n}$ and give a proof of Proposition~\ref{p:edge-iso-general}.
In Section~\ref{s:PFS} we introduce the projection-first search algorithm, which uses projection lemmas proved in Section~\ref{s:projection}, and give some consequences for percolation on $\P{n}$ as well as for small subgraph counts.
In Sections~\ref{s:perc} and~\ref{s:conn} we discuss the percolation and connectivity thresholds and prove Theorems~\ref{t:percolationthreshold} and~\ref{t:connectivitythreshold}, respectively.
Finally, in Section~\ref{s:discuss} we discuss avenues for future research.

\section{Preliminaries}\label{s:prelim}
In this section, we collect some tools and definitions which will be useful later.
The graph theory terminology we use is standard (see, e.g.,~\cite{B09}).
For polytope theory, we follow the terminology in~\cite{Z95}, but provide a summary of the key definitions.
We also include some key properties of the permutahedron, as well as lemmas about random graphs, trees and branching processes.

\subsection{A brief introduction to polytopes}
A $\mathcal{V}\emph{-polytope}$ is the convex hull of a finite set of points of $\mathbb{R}^n$ for some $n \in \mathbb{N}$.
An $\mathcal{H}\emph{-polytope}$ in $\mathbb{R}^n$ is a bounded set of the form $\{\bm{x} \in \mathbb{R}^n : \bm{A}\bm{x} \geq \bm{b}\}$ for some $\bm{A} \in \mathbb{R}^{k\times n}$ and $\bm{b} \in \mathbb{R}^k$, that is, a bounded intersection of halfspaces.
It can be shown that every $P \subseteq \mathbb{R}^n$ which is either an $\mathcal{H}$-polytope or a $\mathcal{V}$-polytope is both.
Such sets are called \emph{polytopes}.
The \emph{dimension} of a non-empty polytope $P$, denoted by $\dim(P)$, is the dimension of the smallest affine plane in which it is contained, and it is useful to define $\dim(\emptyset) = -1$.

Let $P \subseteq \mathbb{R}^n$ be a polytope.
If $\bm{a} \in \mathbb{R}^n$ and $b \in \mathbb{R}$ are such that $\bm{a}^\tran\bm{x} \geq b$ holds for every $\bm{x} \in P$, then the set $F = P \cap \{\,\bm{x} \in \mathbb{R}^n : \bm{a}^\tran\bm{x} = b\,\}$ is called a \emph{face} of $P$, and the inequality $\bm{a}^\tran \bm{x} \geq b$ \emph{defines} $F$.
One may easily check that $\emptyset$ and $P$ are always faces of $P$.
Other faces, if any, are called \emph{proper}.
Clearly, a face of a polytope is a polytope itself.
A face of $P$ of dimension $0$, $1$ or $\dim(P) - 1$ is called a \emph{vertex}, an \emph{edge} or a \emph{facet} of $P$, respectively.
The set of vertices of $P$ is denoted by $V(P)$.
Any proper face of $P$ may be written as an intersection of facets of $P$, and a face of a face of $P$ is also a face of $P$.
The faces of $P$ form a partially ordered set under inclusion, called the \emph{face lattice} of $P$.
Two polytopes are said to be \emph{combinatorially equivalent} if their face lattices are isomorphic.

Let $\bm{v}$ be a vertex of $P$.
The \emph{vertex figure} of $P$ at $\bm{v}$, denoted by $P / \bm{v}$, is the polytope obtained by intersecting $P$ with a hyperplane that separates $\bm{v}$ from the other vertices of $P$.
Formally, take an arbitrary inequality $\bm{a}^\tran \bm{x} \geq b$ defining $\{\bm{v}\}$, and any $b' > b$ such that $\bm{a} \bm{v'} \geq b'$ holds for every vertex $\bm{v'} \neq \bm{v}$ of $P$.
We define
\[
  P / \bm{v} := P \cap \{\,\bm{x} \in \mathbb{R}^n : \bm{a}^\tran \bm{x} = b'\,\}.
\]
Since $P / \bm{v}$ is the intersection of $P$ with a hyperplane, it is also a polytope.
Denoting by $\operatorname{aff}(S)$ the affine span of a set $S$, we have that, for every $0 \leq k \leq \dim(P)$, the mapping
\[ F \mapsto P \cap \operatorname{aff}\bigl(\{\bm{v}\} \cup F\bigr) \]
is a bijection between $(k-1)$-dimensional faces of $P/\bm{v}$ and $k$-dimensional faces of $P$ containing $\bm{v}$ (\cite[Proposition 2.4]{Z95}).
In particular, this implies that $P / \bm{v}$ is well-defined up to combinatorial equivalence, despite the arbitrary choices of $\bm{a}$ and $b'$.

Given $\bm{x}, \bm{y} \in \mathbb{R}^n$, we denote the line segment joining them by $[\bm{x}, \bm{y}]$.
A graph, called the \emph{$1$-skeleton} of $P$, may be obtained by connecting distinct vertices $\bm{v}$, $\bm{w}$ of $P$ whenever $[\bm{v}, \bm{w}]$ is a face of $P$.
A $d$-dimensional polytope $P$ is \emph{simple} if its $1$-skeleton is a $d$-regular graph.
This is not to be confused with the definition of a regular polytope, which will not be used in the rest of this paper.
It can be shown that $P$ is simple if and only if $P / \bm{v}$ is a simplex (that is, its vertices are affinely independent, or equivalently every subset of vertices corresponds to a different face) for every vertex $\bm{v}$ of $P$.
By a celebrated theorem of Blind and Mani-Levitska~\cite{BML87} (see also~\cite{K88}), simple polytopes are determined by their $1$-skeletons (up to combinatorial equivalence).

\subsection{Zonotopes}\label{s:zonotope}
A \emph{zonotope} $Z$ is the image of a cube $[-1, 1]^p$ under an affine projection.
Writing the projection as $\pi(\bm{x}) = \bm{V}\bm{x} + \bm{z}$ and denoting by the columns of $\bm{V}$ by $\bm{v_1}, \ldots, \bm{v_p}$, we see that a set $Z \subseteq \mathbb{R}^n$ is a zonotope if and only if there exist $p \geq 0$ and vectors $\bm{z}, \bm{v_1}, \ldots, \bm{v_p}$ such that $Z = \{\bm{z}\} + [-\bm{v_1}, \bm{v_1}] + \cdots + [-\bm{v_p}, \bm{v_p}]$, where $+$ denotes the Minkowski sum.
The vectors $\bm{v_1}, \ldots, \bm{v_p}$ are called \emph{generators} of $Z$.
For any zonotope, one may choose generators such that all $\bm{v_i}$ are nonzero and no two $\bm{v_i}$ are parallel (the Minkowski sum is commutative and the sum of two parallel segments is a segment).
Every zonotope is a polytope, and a \emph{simple} zonotope is a zonotope which is a simple polytope.

The following result can be deduced in a relatively straightforward manner from known results in the literature, but we were not able to find a direct statement of this fact.
A \emph{partial cube} is a graph that can be isometrically embedded into a hypercube.

\begin{lemma}\label{l:zonotopepartialcube}
  Let $Z$ be a zonotope.
  Then the $1$-skeleton of $Z$ is a partial cube.
\end{lemma}
\begin{proof}
  The statement is trivial if $Z$ has dimension $0$.
  Without loss of generality, we may translate $Z \subseteq \mathbb{R}^n$ so that it is the image of $Q = [-1, 1]^k$ under a linear projection $\pi$.
  We will use $\bm{V}$ to denote the matrix of $\pi$, whose columns we will denote by $\bm{v_1}, \ldots, \bm{v_k}$.
  As mentioned, we may assume no $\bm{v_i}$ is zero and no two $\bm{v_i}$ are parallel.
  We wish to show that $\pi^{-1}$ restricted to $V(Z)$ is a function between vertices of the $1$-skeletons of $Z$ and $Q$, and a graph isometry to its image.

  Let $F$ be a face of $Z$ and $\bm{a}^\tran \bm{z} \geq b$ an inequality that defines it (with $\bm{a} \in \mathbb{R}^n$, $b \in \mathbb{R}$).
  The corresponding inequality $(\bm{a}^\tran \bm{V}) \bm{x} \geq b$ shows that $\pi^{-1}(F)$ is a face of $Q$.
  Using the well-known fact that every proper face of a cube is a subcube, we can say a little more about preimages of low-dimensional faces of $Z$.

  Indeed, since we chose $\pi$ such that no $\bm{v_i}$ is zero, the image under $\pi$ of every subcube of $Q$ of dimension at least $1$ has affine dimension at least $1$ and hence $\pi^{-1}(\bm{u})$ is a vertex of $Q$ for every vertex $\bm{u}$ of $Z$.
  Similarly, since no $\bm{v_i}$ is zero and no two $\bm{v_i}$ are parallel, the image under $\pi$ of every subcube of $Q$ of dimension at least $2$ has affine dimension at least $2$ and hence $\pi^{-1}(e)$ is an edge of $Q$ for every edge $e$ of $Z$.
  This implies that $\dist(\pi^{-1}(\bm{u}), \pi^{-1}(\bm{w})) \leq \dist(\bm{u}, \bm{w})$, since we may `pull back' shortest paths in $Z$ to the hypercube.
  It remains to show the opposite inequality.

  Note that, by definition, every $\bm{u} \in V(Z)$ admits an $\bm{a_u} \in \mathbb{R}^n$ such that $\bm{u}$ is the unique minimiser of $\bm{a_u}^\tran \bm{z}$ among all $\bm{z} \in Z$.
  If the $i$th coordinate of $\pi^{-1}(\bm{u})$ is $1$ (resp.\ $-1$), it is easy to see that $\bm{a_u}^\tran \bm{v_i}$ is negative (resp.\ positive).
  For $\bm{u}, \bm{w} \in V(Z)$, let $I(\bm{u}, \bm{w}) = \{i \in [k] : (\pi(\bm{u})^{-1})_i \neq (\pi(\bm{w})^{-1})_i\}$, and observe that $\bm{a_u}^\tran\bm{v_i}$ and $\bm{a_w}^\tran \bm{v_i}$ have the same sign if and only if $i \not\in I(\bm{u},\bm{w})$.
  Since $\dist(\pi^{-1}(\bm{u}), \pi^{-1}(\bm{w})) = |I(\bm{u}, \bm{w})|$,
  our remaining goal is to show the existence of a path connecting $\bm{u}, \bm{w} \in V(Z)$ of length $|I(\bm{u}, \bm{w})|$.

  We will do so by induction on $n = |I(\bm{u}, \bm{w})|$, the case $n = 0$ being trivial.
  Given distinct $\bm{u}, \bm{w} \in V(Z)$, let $\bm{\hat{a}_t} = (1-t)\bm{a_u} + t\bm{a_w}$ for all $t \in [0,1]$, and take the smallest $\eps > 0$ such that the set $J_\eps = \{j \in I(\bm{u}, \bm{w}) : \bm{\hat{a}_\eps}^\tran \bm{v_j} = 0\}$ is nonempty.
  Let $F$ be the face minimising $\bm{\hat{a}_\eps}^\tran\bm{z}$ among all $\bm{z} \in Z$, and observe that the face $\pi^{-1}(F)$ minimises $f(\bm{x}) = \bm{\hat{a}_\eps}^\tran\bm{V}\bm{x}$ over all $\bm{x} \in Q$.
  For all $j\not\in J_\eps$, $\bm{a_u}^\tran\bm{v_j}$ and $\bm{\hat{a}_\eps}^\tran\bm{v_j}$ have the same sign, and therefore $\bm{u} \in F$ and $I(\bm{u}, \bm{z}) \subseteq J_\eps$ for every $\bm{z} \in V(F)$.
  Moreover, for every $j \in J_\eps$, $\pi^{-1}(F)$ contains the edge in the $j$th direction containing $\pi^{-1}(\bm{u})$, since $f$ is constant on this edge.
  Therefore, $\dim(\pi^{-1}(F)) \geq 1$, and thus $\dim(F) \geq 1$ as argued before.

  Taking an arbitrary edge $[\bm{u}, \bm{z}] \subseteq F$, we have $I(\bm{u}, \bm{z}) \subseteq J_\eps$ and $|I(\bm{u}, \bm{z})| = 1$, and thus $|I(\bm{z}, \bm{w})| = n-1$.
  By induction, there's a path between $\bm{z}$ and $\bm{w}$ of length $n-1$.
  Extending it with the edge $\bm{u}\bm{z}$, we obtain the desired path, finishing the proof.
\end{proof}

\subsection{Coxeter groups}
A group $\Gamma$ which has a presentation of the form
\[
  \bigl\langle r_1, r_2, \ldots , r_n \, \bigm| \, (r_i r_j)^{m_{i,j}} = e \bigr\rangle,
\]
where $m_{i,i} = 1$ for all $i$ and $m_{i,j} = m_{j,i} \geq 2$ if $i \neq j$, is said to be a \emph{Coxeter group}.
Given such a presentation we call $(\Gamma,\{r_1,\ldots, r_n\})$ a \emph{Coxeter system}.

Given a Coxeter system $(\Gamma,\{r_1,\ldots, r_n\})$ and a subset $I \subseteq [n]$ let us write $\Gamma_I$ for the subgroup of $\Gamma$ generated by $\{r_i : i \not\in I \}$.
We will need the following elementary fact about Coxeter groups.

\begin{lemma}[{\cite[Theorem 4.1.6]{D08}}]\label{l:Coxeter}
  Let $(\Gamma, \{r_1,\ldots, r_n\})$ be a Coxeter system and let $I,J \subseteq [n]$.
  Then
  \[
    \Gamma_I \cap \Gamma_J = \Gamma_{I \cup J}.
  \]
\end{lemma}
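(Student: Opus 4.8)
The plan is to prove the two inclusions separately. Recall that, in the notation of the excerpt, $\Gamma_K$ is generated by $\{r_i : i \notin K\}$, so that $\Gamma_{I\cup J}$ is generated by exactly those $r_k$ with $k \notin I$ and $k \notin J$; each such generator lies in both $\Gamma_I$ and $\Gamma_J$, and hence the inclusion $\Gamma_{I\cup J} \subseteq \Gamma_I \cap \Gamma_J$ is immediate. The content is therefore the reverse inclusion.

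For this I would use the notion of the \emph{support} of an element $w \in \Gamma$: writing $w$ as a word of minimal length in the generators $r_1,\dots,r_n$, let $\mathrm{supp}(w)$ be the set of indices $i$ such that $r_i$ occurs in this word. Two standard facts about Coxeter systems make this a useful invariant. First, the \emph{Deletion Condition}: any expression of $w$ as a product of generators can be shortened to a reduced word for $w$ by repeatedly deleting pairs of letters; consequently, if $w$ is a product of generators $r_i$ with $i \notin I$ — which is exactly the condition $w \in \Gamma_I$ — then $w$ admits a reduced word in which every letter $r_i$ satisfies $i \notin I$. Second, Tits' solution of the word problem: any two reduced words for the same element are related by a sequence of braid substitutions $\underbrace{r_ir_jr_i\cdots}_{m_{ij}} \leftrightarrow \underbrace{r_jr_ir_j\cdots}_{m_{ij}}$. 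Since such a substitution never alters the set of letters appearing in a word, $\mathrm{supp}(w)$ does not depend on the chosen reduced word, and moreover $w$ has a reduced word with all letters in $\{r_i : i \in A\}$ if and only if $\mathrm{supp}(w) \subseteq A$.

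Granting these, the argument finishes quickly: if $w \in \Gamma_I \cap \Gamma_J$, then the first fact gives $\mathrm{supp}(w) \subseteq [n]\setminus I$ and, symmetrically, $\mathrm{supp}(w) \subseteq [n]\setminus J$, so $\mathrm{supp}(w) \subseteq [n]\setminus(I\cup J)$; hence $w$ is a product of generators $r_k$ with $k \notin I\cup J$, i.e.\ $w \in \Gamma_{I\cup J}$.

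I expect the main obstacle to be the invariance of the support — equivalently Tits' word property — which is really the heart of the statement (it is in fact more or less equivalent to the lemma itself) and is the reason the excerpt simply cites \cite[Lemma 4.1.6]{D08}; I would be content to quote it as a black box. As a sanity check, in the case actually used in the paper, where $\Gamma = S_{n+1}$ and $r_i = \tau_i$, one can bypass the general machinery: $\Gamma_K$ is precisely the subgroup of permutations fixing setwise each block of the partition $\mathcal{P}_K$ of $[n+1]$ in which $i$ and $i+1$ lie in a common block exactly when $i \notin K$, and a permutation fixes every block of $\mathcal{P}_I$ and every block of $\mathcal{P}_J$ if and only if it fixes every block of their common refinement, which is visibly $\mathcal{P}_{I\cup J}$.
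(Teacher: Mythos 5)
Your proposal is correct, but note that the paper does not actually prove Lemma~\ref{l:Coxeter}: it is quoted as a black box from Davis \cite[Lemma 4.1.6]{D08}, so there is no in-paper argument to compare against. What you supply is the standard proof of that background fact. The easy inclusion $\Gamma_{I\cup J}\subseteq\Gamma_I\cap\Gamma_J$ is exactly as you say, and your reverse inclusion via the support is sound: the Deletion Condition guarantees that an element of $\Gamma_I$ has \emph{some} reduced word in the letters $\{r_i : i\notin I\}$ (deletion only removes letters, never introduces new ones), and Tits' word theorem makes the letter set of a reduced word independent of the chosen reduced word, since a braid substitution uses the same two letters on both sides; combining the two conditions for $I$ and $J$ gives $\mathrm{supp}(w)\subseteq[n]\setminus(I\cup J)$, hence $w\in\Gamma_{I\cup J}$. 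You correctly identify that the invariance of the support is the real content, and citing it (or the lemma itself, as the paper does) is legitimate; one could also prove the support fact directly from the Exchange Condition by induction on length, avoiding Tits, but that is a matter of taste. Your concluding Young-subgroup computation for $\Gamma=S_{n+1}$ (block stabilisers of interval partitions, whose intersection is the stabiliser of the common refinement, with cut set $I\cup J$) is also correct; just be aware that it only covers the permutahedron instance, whereas the paper invokes the lemma inside Lemma~\ref{l:coxeterproj} for an arbitrary Coxeter system, so the general statement is genuinely needed there.
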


Recall from elementary group theory that, for any subgroup $H$ of $\Gamma$ and any $a \in \Gamma$, the coset of $H$ containing $a$ can be written as $\{b \in \Gamma: a^{-1}b \in H\}$.
This implies that, for two subgroups $H$, $K$ of $\Gamma$, the intersection of a coset of $H$ and a coset of $K$ is either empty or a coset of the intersection subgroup $H \cap K$.
Hence, by Lemma~\ref{l:Coxeter}, the intersection of a coset of $\Gamma_I$ and a coset of $\Gamma_J$ is either empty or a coset of $\Gamma_{I\cup J}$.
In particular, for any tuple $(g_1,g_2,\ldots, g_n)$ there is at most one element $h \in \bigcap_{i=1}^n g_i \Gamma_{\!\{i\}}$, and so we may identify $\Gamma$ with a particular subset of $\Gamma^{n}$ in this fashion, giving a coordinate representation of sorts, which will be useful later in the paper.

\subsection{Properties of the permutahedron}
We will need to use some basic facts about the structure of the permutahedron and about its various representations.
\subsubsection{Representations of the permutahedron}
In the paper it will sometimes be convenient to view the permutahedron as a polytope, which we will denote by $P(n)$, in order to argue geometrically.
In this paper, we will view $P(n)$ in three different ways.

The most useful to us is as a $\mathcal{V}$-polytope, given by the convex hull of the permutation vectors \[\bigl\{\,\bigl(\pi(1),\pi(2),\ldots, \pi(n+1)\bigr) : \pi \in S_{n+1}\,\bigr\}\] in $n+1$ dimensions.
Note that it is apparent that the polytope $P(n)$ is contained in an $n$-dimensional subspace of $\mathbb{R}^{n+1}$, and so $P(n)$ is an $n$-dimensional polytope.
To understand $P(n)$'s faces, it is useful to view it as an $\mathcal{H}$-polytope given by the set of points $\bm{x}$ such that
\begin{equation}\label{e:Hpolytope}
  \bm{1}^\tran\bm{x} = \binom{n+2}{2} \quad\text{and}\quad \bm{1}_I^\tran\, \bm{x} \geq \binom{|I|+1}{2}\text{ for every }\emptyset \subsetneq I \subsetneq [n+1],
\end{equation}
where $\bm{1}_I \in \mathbb{R}^{n+1}$ is the characteristic vector of the set $I$.

Another useful presentation of the polytope $P(n)$ is as zonotope, arising as the Minkowski sum
\[
  \frac{n+2}{2} \cdot \bm{1} + \sum_{1 \leq j < i \leq n+1} [-\bm{v_{ij}}, \bm{v_{ij}}],
\]
where $\{\bm{e}_1, \ldots, \bm{e}_{n+1}\}$ is the canonical basis of $\mathbb{R}^{n+1}$, $\bm{1} = (1, \ldots, 1) \in \mathbb{R}^{n+1}$, and $\bm{v_{ij}} = (\bm{e_i} - \bm{e_j})/2$.

Finally, it will also be important to us to consider the presentation of the graph $\P{n}$ as a Cayley graph arising from a \emph{Coxeter system}.
It is apparent from Definition~\ref{def:permutahedron} that $\P{n}$ is the (left-multiplication) Cayley graph of the symmetric group $S_{n+1}$ whose generating set $S = \{\tau_i : i \in [n]\}$ is the set of adjacent transpositions, and that $(S_{n+1},S)$ forms a Coxeter system.

\subsubsection{Faces of the permutahedron}\label{s:faces}
When viewing the permutahedron as an $\mathcal{H}$-polytope defined as in~\eqref{e:Hpolytope}, every inequality defines a facet, and the facet $F(I)$, corresponding to the inequality for an $\emptyset \subsetneq I \subsetneq [n+1]$, has vertex set equal to $\{\,\pi \in S_{n+1} : \pi(I) = \{1, \ldots, |I| \}\,\}$.
Every facet is of this form.

Since every proper face of a polytope is an intersection of facets, it follows that the proper faces of $P(n)$ of dimension $n-k$ ($1 \leq k \leq n$) are precisely the sets of the form $F(I_1) \cap \cdots \cap F(I_k)$ for $\emptyset \subsetneq I_1 \subsetneq \cdots \subsetneq I_k \subsetneq [n+1]$.
In particular, the set of edges of $P(n)$ is
\[
  \{[\pi, \tau_i \pi] : \pi \in S_{n+1}, i \in [n]\}\text{, where $\tau_i$ denotes the transposition $(i, i+1)$,}
\]
and therefore $\P{n}$, as defined in Definition~\ref{def:permutahedron}, is the $1$-skeleton of $P(n)$.

If $\pi$ is a vertex of the face $\bigcap_{i=1}^k F(I_i)$ for $\emptyset \subsetneq I_1 \subsetneq \cdots \subsetneq I_k \subsetneq [n+1]$, then the edge $[\pi, \tau_{j}\pi]$ is contained in it if and only if $j \not\in \{|I_i| : i \in 1 \leq i \leq k\}$.
Informally, swapping two values which differ by $1$ and belong to the same `segment' $\pi(I_i \setminus I_{i-1})$ for some $1 \leq i \leq k+1$ leads to a new vertex on the same face, where $I_0 := \emptyset$ and $I_{k+1} := [n+1]$.
This implies the face $\bigcap_{i=1}^k F(I_i)$ is combinatorially equivalent to the Cartesian product of polytopes $\prod_{i=1}^{k+1} P\bigl(|I_i \setminus I_{i-1}|-1\bigr)$, which is a polytope of dimension $n-k$.
Conversely, for $0 \leq k \leq n$, each Cartesian product of $k+1$ permutahedra whose dimensions sum up to $n-k$ can be realised as a face of $P(n)$ by choosing an appropriate chain of sets $\emptyset \subsetneq I_1 \subsetneq \cdots \subsetneq I_k \subsetneq [n+1]$.

The above provides information about the $1$-skeleton of faces.
Recall that the Cartesian product of graphs $H_1, \ldots, H_k$ is the graph $\bigcart_{i=1}^k H_i$ on vertex set $\prod_{i=1}^k V(H_i)$ whose edge set is given by
\[E\bigg(\bigcart_{i=1}^k H_i\bigg) = \Big\{\,vw \,:\, \text{for some }i \in [k],\, v_i w_i \in E(H_i)\text{ and }v_j=w_j\text{ for all }j \in [k]\setminus\{i\}\,\Big\},\]
By the above, for any $\emptyset \subsetneq I_1 \subsetneq \cdots \subsetneq I_k \subsetneq [n+1]$, the $1$-skeleton of the $(n-k)$-dimensional face $\bigcap_{i=1}^k F(I_i)$ is isomorphic to the Cartesian product $\bigcart_{i=1}^{k+1} \operatorname{Perm}\bigl(|I_i \setminus I_{i-1}|-1\bigr)$, whose dimensions sum up to $n-k$.

If a graph $H$ is isomorphic to the Cartesian product $\bigcart_{i=1}^{t} \P{n_i}$ of some family of permutahedra, we will call $H$ a \emph{face graph} of dimension $d:=d(H)=\sum_{i=1}^t n_i$.
Note that such a graph is $d$-regular.
Observe also that $d$-dimensional hypercubes are face graphs, since they are the Cartesian product of $d$ copies of $K_2 = \P{1}$.
For this reason, an $n$-dimensional permutahedron has a face which is a hypercube of dimension $k = \lfloor (n+1)/2\rfloor$, corresponding to any choice of a maximal chain $\emptyset \subsetneq I_1 \subsetneq \cdots \subsetneq I_k \subsetneq [n+1]$ satisfying $|I_i \setminus I_{i-1}|=2$ for each $i \in [k]$.

\subsection{Auxiliary Lemmas}
We use the following result which bounds the number of $m$ vertex trees in a graph.

\begin{lemma}[{\cite[Lemma 2]{BFM98}}]\label{l:treecount}
  Let $G$ be a graph on $n$ vertices with maximum degree $\Delta$ and minimum degree $\delta$.
  Let $t_m(G)$ be the number of $m$-vertex rooted trees in $G$.
  Then,
  \[
   n \cdot \frac{m^{m-2}(\delta-m)^{m-1}}{(m-1)!} \le t_m(G)\le n(e\Delta)^{m-1}.
  \]
\end{lemma}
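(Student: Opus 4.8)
Throughout write $n=|V(G)|$, and interpret a \emph{rooted $m$-vertex tree in $G$} as a subtree $H\subseteq G$ with $|V(H)|=m$ together with a choice of root $r\in V(H)$. The plan is to derive both inequalities from a single exact identity relating $t_m(G)$ to a count of labelled tree shapes via Cayley's formula. Let $\mathcal{T}$ range over the $m^{m-2}$ rooted labelled trees on the vertex set $[m]$ with root $1$, and for such a $\mathcal{T}$ let $\mathrm{emb}(\mathcal{T})$ be the number of injective, edge-preserving maps $\phi\colon[m]\to V(G)$. The first step is to prove
\[
  \sum_{\mathcal{T}}\mathrm{emb}(\mathcal{T}) \;=\; (m-1)!\cdot t_m(G).
\]
Each such $\phi$ spans a rooted $m$-vertex subtree of $G$ — the tree on $\phi([m])$ with edge set $\{\phi(i)\phi(j):ij\in E(\mathcal{T})\}$, rooted at $\phi(1)$ — and, conversely, a given rooted subtree $(H,r)$ arises from precisely $(m-1)!$ pairs $(\mathcal{T},\phi)$: such a pair amounts to a bijection $\phi\colon[m]\to V(H)$ with $\phi(1)=r$, and then $\mathcal{T}$ is forced to be the $\phi$-preimage of $H$. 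So the map $(\mathcal{T},\phi)\mapsto$ (spanned rooted subtree) has all fibres of size $(m-1)!$, which gives the identity.

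For the lower bound I would bound each $\mathrm{emb}(\mathcal{T})$ from below by building embeddings greedily: pick $\phi(1)$ arbitrarily ($n$ ways), then reveal the other $m-1$ vertices of $\mathcal{T}$ in breadth-first order, sending each to an unused neighbour of its (already placed) parent. As at most $m-1$ vertices are ever in use and every vertex of $G$ has degree at least $\delta$, each of these $m-1$ steps has at least $\delta-m$ valid choices, so $\mathrm{emb}(\mathcal{T})\ge n(\delta-m)^{m-1}$ (we may assume $\delta>m$, the bound being informative only then). Since there are $m^{m-2}$ shapes, the identity yields $t_m(G)\ge \frac{m^{m-2}n(\delta-m)^{m-1}}{(m-1)!}$.

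The upper bound comes from the same greedy picture: each of the $m-1$ revealing steps has at most $\Delta$ choices, so $\mathrm{emb}(\mathcal{T})\le n\Delta^{m-1}$, and hence $t_m(G)\le \frac{m^{m-2}n\Delta^{m-1}}{(m-1)!}$. It then suffices to observe the elementary estimate $m^{m-2}\le e^{m-1}(m-1)!$, which follows at once from Stirling's bound $(m-1)!\ge\sqrt{2\pi(m-1)}\,((m-1)/e)^{m-1}$, to conclude $t_m(G)\le n(e\Delta)^{m-1}$.

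The one place calling for genuine care is the identity in the first paragraph — verifying that the assignment of a rooted subtree to each pair $(\mathcal{T},\phi)$ is exactly $(m-1)!$-to-one, for which one has to be a little careful about what counts as an embedding and what counts as the resulting subtree. Everything after that is mechanical: a greedy count squeezed between $\delta-m$ and $\Delta$ per step, Cayley's formula, and a single Stirling estimate.
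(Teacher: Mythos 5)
Your proof is correct; the paper gives no proof of this lemma, simply citing \cite[Lemma 2]{BFM98}, and your double count of pairs (labelled tree on $[m]$, injective edge-preserving map into $G$) together with a greedy breadth-first embedding is essentially the argument of that source --- indeed it is exactly the scheme the paper itself re-uses and adapts in the proof of Lemma \ref{l:permtreecount}. The only caveats are minor ones you already flag: the lower bound is meaningful (and valid) only when $\delta > m$, and the elementary estimate $m^{m-2}\le e^{m-1}(m-1)!$ does hold for all $m\ge 2$ (e.g.\ since $m^{m-2}\le (m-1)^{m-1}\le e^{m-1}(m-1)!$).
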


We will also need the following two results, which bound the likely diameter and maximum degree in a random $m$-vertex tree, due to R\'{e}nyi and Szekeres~\cite{RS67} and Moon~\cite{M68}, respectively.

\begin{lemma}[\cite{RS67}]\label{l:treediameter}
  Let $T$ be a tree chosen uniformly at random from all trees with vertex set $[m]$.
  Then $T$ has diameter $O_p(\sqrt{m})$.
  That is, for any $\varepsilon >0$ there exists $K>0$ such that the probability that $T$ has diameter greater than $K \sqrt{m}$ is at most $\varepsilon$ for all sufficiently large $m$.
\end{lemma}

\begin{lemma}[\cite{M68}]\label{l:treemaxdeg}
  Let $T$ be a tree chosen uniformly at random from all trees with vertex set $[m]$.
  Then $\Delta(T) = (1+o(1))\log m/\!\log \log m$ with probability tending to one as $m \to \infty$.
\end{lemma}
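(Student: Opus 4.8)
The plan is to pass, via the Prüfer correspondence, to the classical balls-in-bins problem and then to estimate the maximum load by the first and second moment methods. First I would recall that the Prüfer code is a bijection between the $m^{m-2}$ labelled trees on $[m]$ and the sequences $(a_1,\dots,a_{m-2}) \in [m]^{m-2}$, under which the degree of a vertex $v$ equals $1$ plus the number of indices $i$ with $a_i = v$. Hence, if we throw $m-2$ balls independently and uniformly at random into $m$ bins and write $L_v$ for the load of bin $v$, then $\Delta(T)$ has the same distribution as $1 + \max_{v\in[m]} L_v$. Since it will follow that $\max_v L_v \to \infty$ whp, it suffices to show that $M \coloneqq \max_{v\in[m]} L_v = (1+o(1))\log m/\log\log m$ whp. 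I fix $\eps \in (0,1)$ and set $k^{\pm} \coloneqq (1\pm\eps)\log m/\log\log m$; note that, since $k^{\pm} = \Theta(\log m/\log\log m)$, we have $\log k^{\pm} = (1+o(1))\log\log m$.

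For the upper bound I would apply a union bound over the $m$ bins together with $k! \ge (k/e)^k$: for any positive integer $k$,
\[
  \mathbb{P}(M \ge k) \;\le\; m\binom{m-2}{k}m^{-k} \;\le\; m\,\frac{(m-2)^k}{k!\,m^k} \;\le\; \frac{m\,e^k}{k^k} \;=\; \exp\!\big(\log m - k(\log k - 1)\big).
\]
Taking $k = \lceil k^{+}\rceil$ we get $k(\log k - 1) = (1+\eps)(1+o(1))\log m$, so the exponent equals $(-\eps + o(1))\log m \to -\infty$ and hence $\mathbb{P}(M \ge k^{+}) \to 0$.

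For the matching lower bound I would use the second moment method. Let $k \coloneqq \lceil k^{-}\rceil$ and let $X$ count the bins of load at least $k$. Since $k = o(\sqrt m)$ we have $\binom{m-2}{k}m^{-k} = (1+o(1))/k!$, and using $(1-1/m)^{m-2-k} = e^{-1+o(1)}$ together with $\log k! = k(\log k - 1) + O(\log\log m)$ one finds
\[
  \mathbb{E} X \;=\; m\,\mathbb{P}(L_1 \ge k) \;\ge\; m\,\mathbb{P}(L_1 = k) \;=\; \exp\!\big(\log m - k(\log k - 1) + O(\log\log m)\big) \;=\; e^{(\eps + o(1))\log m} \to \infty.
\]
The coordinates of a multinomial vector are negatively associated, so the increasing events $\{L_v \ge k\}$ are pairwise negatively correlated, whence $\operatorname{Var}(X) \le \sum_{v=1}^{m}\mathbb{P}(L_v \ge k) = \mathbb{E} X$. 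By Chebyshev's inequality $\mathbb{P}(X = 0) \le \operatorname{Var}(X)/(\mathbb{E} X)^2 \le 1/\mathbb{E} X \to 0$, so whp some bin has load at least $k \ge k^{-}$, i.e.\ $M \ge (1-\eps)\log m/\log\log m$. Since $\eps\in(0,1)$ was arbitrary, combining with the upper bound proves the claim.

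The only delicate point is the second moment estimate: one must verify that the constant $1$ in the exponent $k(\log k - 1)$ is extracted correctly from Stirling's formula — that is, that the corrections from $\binom{m-2}{k}m^{-k} = (1+o(1))/k!$, from the $\sqrt{2\pi k}$ factor in Stirling, and from $(1-1/m)^{m-2-k}$ all contribute only $O(\log\log m) = o(\log m)$ to the exponent — and one must invoke the negative association of the multinomial coordinates in order to bound $\operatorname{Var}(X)$ without evaluating it exactly. Alternatively, one may Poissonise, replacing the $m-2$ balls by $\mathrm{Po}(m-2)$ of them so that the loads become independent Poisson variables, treat that case, and de-Poissonise at the end; either route is routine.
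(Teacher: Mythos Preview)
Your argument is correct. The reduction via the Pr\"ufer bijection to the maximum load in a balls-in-bins experiment is the standard route, and your first/second moment calculations are carried out accurately; in particular, the use of negative association of the multinomial coordinates to obtain $\operatorname{Var}(X)\le \mathbb{E}X$ is a clean way to avoid computing the covariance directly.

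For comparison: the paper does not actually give a proof of this lemma but simply quotes it from Moon~\cite{M68} as an auxiliary input. So there is no ``paper's proof'' to match; your write-up is a self-contained justification of the cited fact, and nothing more is needed.
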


We will also need some basic facts about Galton--Watson trees.
\begin{lemma}\label{l:branching}
  Let $c > 1$ be a constant and let $T$ be a Galton--Watson tree with child distribution $\Bin(n,p)$ with $np = c$ and let $\gamma(c)$ be defined as in~\eqref{e:survival-prob}.
  Then
  \begin{enumerate}[$(i)$]
  \item\label{i:branching:continuous} $\gamma(c)$ is an increasing, continuous function on $(1,\infty)$;
  \item\label{i:branching:quant} $\gamma(c) > c-1$ for all $1 < c \leq 5/4$;
  \item\label{i:branching:survival} if $k$ is growing with $n$, then the probability that $T$ is finite and has size at least $k$ is $o_n(1)$;
  \item\label{i:branching:infinite} as $n$ tends to infinity, the probability that $T$ is infinite tends to $\gamma(c)$; and
  \item\label{i:branching:conditioning} if we condition on the fact that $T$ is infinite, then as $k$ tends to infinity
    \[
      |T_k|^{1/k} \to c
    \]
    in probability, where $|T_k|$ is the size of the $k$th generation.
  \end{enumerate}
\end{lemma}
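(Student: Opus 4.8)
I would verify the five items essentially independently, the only one needing genuine work being $(iv)$.

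For \eqref{survival prob} write $\phi_c(\gamma) = 1 - e^{-c\gamma}$, a strictly concave function with $\phi_c(0) = 0$ and $\phi_c'(0) = c$; for $c > 1$ the function $\phi_c(\gamma) - \gamma$ is concave, positive for small $\gamma > 0$ and tends to $-\infty$, so it has a unique positive zero $\gamma(c)$, and there it is decreasing, i.e.\ $\phi_c'(\gamma(c)) = c e^{-c\gamma(c)} < 1$; moreover $\{\gamma > 0 : \phi_c(\gamma) \ge \gamma\} = (0, \gamma(c)]$. Item $(i)$ is then immediate from the implicit function theorem applied to $F(\gamma,c) = 1 - e^{-c\gamma} - \gamma$, since $\partial_\gamma F(\gamma(c),c) = ce^{-c\gamma(c)} - 1 \ne 0$; in fact $\gamma$ is real-analytic on $(1,\infty)$. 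For the inequality in $(ii)$ it suffices, by the characterisation of $\gamma(c)$ above, to check $\phi_c(c-1) \ge c-1$, i.e.\ $e^{-c(c-1)} \le 2-c$; setting $x = c-1$ and taking logarithms this reads $x + x^2 \ge -\ln(1-x) = x + \tfrac{x^2}{2} + \tfrac{x^3}{3} + \cdots$, i.e.\ $\tfrac12 \ge \sum_{i \ge 1} x^i/(i+2)$, and the right-hand side is at most $\tfrac13 \cdot \tfrac{x}{1-x} \le \tfrac19$ for $0 < x \le \tfrac14$. For the asymptotics, $(i)$ gives $\gamma(c) \to 0$ as $c \to 1^+$, so expanding $e^{-c\gamma}$ in \eqref{survival prob} to second order yields $(c-1)\gamma = \tfrac12 c^2 \gamma^2 + O(\gamma^3)$; dividing by $\gamma$ and feeding back $\gamma = O(c-1)$ gives $\gamma(c) = 2(c-1)/c^2 + O((c-1)^2) = 2(c-1) + O((c-1)^2)$.

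For $(iii)$, recall that a Galton--Watson tree with offspring probability generating function $g$ is infinite with probability $1 - q$, where $q$ is the smallest fixed point of $g$ in $[0,1]$. Here $g = g_n$ with $g_n(s) = (1 + \tfrac cn(s-1))^n \to e^{c(s-1)} =: g_\infty(s)$ uniformly on $[0,1]$, and every $g_n$ and $g_\infty$ is strictly convex and increasing with value $1$ and slope $c > 1$ at $s = 1$, hence has a unique fixed point $q_n$, resp.\ $q_\infty$, in $[0,1)$; since $g_\infty'(q_\infty) < 1$ the uniform convergence forces $q_n \to q_\infty$, and the relation $g_\infty(q_\infty) = q_\infty$ is precisely \eqref{survival prob} for $\gamma = 1 - q_\infty$, so $\mathbb{P}(|T| = \infty) = 1 - q_n \to \gamma(c)$. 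For $(v)$ ($n$ being fixed, so the offspring law $\mathrm{Bin}(n,c/n)$ is bounded and in particular satisfies the $X\log X$ condition), the Kesten--Stigum theorem gives that $W_k := |T_k|/c^k$ converges almost surely to a random variable $W$ with $\mathbb{E}W = 1$ and $\{W > 0\}$ equal almost surely to the survival event; conditioned on survival we then have $W \in (0,\infty)$ almost surely, so $W_k^{1/k} \to 1$ and hence $|T_k|^{1/k} = c\,W_k^{1/k} \to c$, in particular in probability.

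The main work is $(iv)$, and the cleanest route leans on $(iii)$. By Le Cam's inequality one may couple the offspring of $T = T_n$ with those of a $\mathrm{Po}(c)$ Galton--Watson tree $T_\infty$ so that the first $K$ vertices revealed by a breadth-first exploration have matching offspring with probability $1 - O_c(K/n)$; since the event $\{|T| \ge K\}$ is decided once the exploration has uncovered $K$ vertices, this yields $\mathbb{P}(|T_n| \ge K) \to \mathbb{P}(|T_\infty| \ge K)$ for each fixed $K$. Combined with $\mathbb{P}(|T_n| = \infty) \to \gamma(c)$ from $(iii)$, this gives $\mathbb{P}(K \le |T_n| < \infty) \to \mathbb{P}(K \le |T_\infty| < \infty)$, and the latter tends to $0$ as $K \to \infty$ because $\sum_m \mathbb{P}(|T_\infty| = m) = 1 - \gamma(c) < \infty$. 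A diagonal argument over any sequence $k = k(n) \to \infty$ then gives $\mathbb{P}(k \le |T_n| < \infty) = o_n(1)$. (A fully quantitative alternative is to start from Dwass's identity $\mathbb{P}(|T_n| = m) = \tfrac1m \binom{mn}{m-1}(c/n)^{m-1}(1-c/n)^{mn-m+1}$ and bound it, via Stirling and $(1-c/n)^{mn} \le e^{-cm}$, by $O_c\big(m^{-3/2}(\rho\,e^{2c/n})^m\big)$ with $\rho = ce^{1-c} < 1$, which is summable with tail tending to $0$ once $n$ is large.) I expect the main obstacle to be precisely this interchange of the limits in $n$ and $K$ -- equivalently, the fact that the offspring law $\mathrm{Bin}(n,c/n)$ varies with $n$, so that the textbook statement ``the total progeny of a subcritical process has exponential tails'' cannot be quoted verbatim; everything else is short calculus or standard Galton--Watson theory.
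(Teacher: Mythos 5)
Your proof is correct, but note that the paper does not actually prove Lemma \ref{l:branching}: it is presented as a collection of standard facts about Galton--Watson trees with a pointer to Harris's monograph \cite{H63}, so the comparison is with the textbook literature rather than with an argument in the paper. Your write-up supplies exactly the missing details, and by a sensible route: items \eqref{i:branching:continuous} and \eqref{i:branching:quant} by elementary calculus together with the implicit function theorem applied to $F(\gamma,c)=1-e^{-c\gamma}-\gamma$ (using the strict inequality $ce^{-c\gamma(c)}<1$ at the nontrivial root, which your concavity argument justifies); item \eqref{i:branching:infinite} by uniform convergence of the offspring generating functions $(1+\tfrac{c}{n}(s-1))^n\to e^{c(s-1)}$ on $[0,1]$ plus stability of the unique fixed point in $[0,1)$; item \eqref{i:branching:conditioning} via Kesten--Stigum; and, for item \eqref{i:branching:survival} --- the only place where the $n$-dependence of the offspring law genuinely matters --- a Le Cam coupling of the first $K$ explored vertices with a $\mathrm{Po}(c)$ tree followed by the interchange of the limits in $n$ and $K$ (your ``diagonal'' step is really just a $\limsup$ over fixed $K$, which works). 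The quantitative alternative via Dwass's formula and Stirling is also sound and is the standard way to make \eqref{i:branching:survival} uniform in $n$; you correctly identify why the off-the-shelf statement about exponential tails of subcritical total progeny cannot simply be quoted.

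Two minor points, neither a real gap. First, the assertion that item \eqref{i:branching:continuous} ``gives'' $\gamma(c)\to 0$ as $c\to 1^+$ is a mis-attribution, since continuity on $(1,\infty)$ says nothing about the boundary behaviour; the limit does follow in one line, e.g.\ $\gamma(\cdot)$ is nondecreasing, so the limit $L\ge 0$ exists and satisfies $L=1-e^{-L}$ by continuity of the defining equation, forcing $L=0$. Second, Kesten--Stigum is a heavier hammer than needed for \eqref{i:branching:conditioning}: since $\mathrm{Bin}(n,c/n)$ has finite variance, the martingale $|T_k|/c^k$ is bounded in $L^2$, and the classical second-moment argument already produces a limit $W$ with $\mathbb{E}W=1$ and $\mathbb{P}(W=0)$ equal to the extinction probability, which is all your argument uses.
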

Properties~\eqref{i:branching:continuous} and~\eqref{i:branching:quant} are simple analytic exercises given the definition~\eqref{e:survival-prob} of $\gamma(c)$.
Property~\eqref{i:branching:survival} would be immediate if $T$ were a Galton--Watson tree with a fixed child distribution, since $\sum_{k=1}^\infty \mathbb{P}(|T|=k) \leq 1$ is summable.
However, it is easy to see that, since $\Bin\bigl(n,\frac{c}{n}\bigr) \to \operatorname{Po}(c)$ in distribution as $n \to \infty$, then $\mathbb{P}(|T|=k) \to \mathbb{P}(|\hat{T}| =k)$ as $n \to \infty$ for any fixed $k$, where $\hat{T}$ is a Galton--Watson tree with child distribution $\operatorname{Po}(c)$, and so Property~\eqref{i:branching:survival} follows.
Properties~\eqref{i:branching:infinite} and~\eqref{i:branching:conditioning} are fundamental facts about branching processes, see, for example,~\cite[Theorem 1]{AN72} and~\cite[Proposition 6.4]{L90a}, respectively.

We will use two concentration bounds.
The first is a version of the Chernoff bound.
\begin{lemma}\label{l:chernoff}
  Let $n \in \mathbb{N}$, let $p \in [0,1]$ and let $X \sim \Bin(n,p)$.
  \begin{enumerate}[(i)]
  \item\label{i:chernoff1} For every positive $t$ with $t \leq \frac{3np}{2}$,
    \[
      \mathbb{P}\bigl(|X -np| \geq t\bigr) \leq 2 \exp\Bigl(-\frac{t^2}{3np} \Bigr).
    \]
  \item\label{i:chernoff2} For every positive $b$,
    \[
      \mathbb{P}\bigl(X \geq bnp \bigr) \leq \Bigl(\frac{e}{b}\Bigr)^{b np}.
    \]
  \end{enumerate}
\end{lemma}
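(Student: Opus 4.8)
This is the textbook Chernoff/Bernstein inequality for a binomial variable, so in a final write-up one could simply invoke \cite[Section~2.1]{JLR00}; I sketch the self-contained argument. Write $\mu = np$. Everything flows from the moment generating function estimate
\[
  \mathbb{E}\big[e^{\lambda X}\big] = \big(1 - p + p e^{\lambda}\big)^{n} \leq \exp\big(\mu (e^{\lambda} - 1)\big),
\]
valid for every real $\lambda$ by the bound $1 + x \leq e^{x}$. Applying Markov's inequality to $e^{\lambda X}$ for $\lambda \geq 0$, and separately to $e^{-\lambda X}$, and then optimising over $\lambda$, yields the two one-sided tail estimates
\[
  \mathbb{P}(X \geq \mu + t) \leq \exp\big(-\mu\, h_{+}(t/\mu)\big) \quad\text{and}\quad \mathbb{P}(X \leq \mu - t) \leq \exp\big(-\mu\, h_{-}(t/\mu)\big),
\]
where $h_{+}(x) = (1+x)\ln(1+x) - x$ for $x \geq 0$ and $h_{-}(x) = (1-x)\ln(1-x) + x$ for $x \in [0,1)$, with $h_{-}(1) := 1$.

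For part~\eqref{i:chernoff2} I would take the first estimate with $\mu + t = b\mu$. If $b \leq 1$ the claimed bound $(e/b)^{b\mu}$ is at least $1$, so there is nothing to prove; if $b \geq 1$, choosing $\lambda = \ln b \geq 0$ in the Markov step directly gives $\mathbb{P}(X \geq b\mu) \leq \exp\big(\mu(b - 1 - b\ln b)\big) \leq \exp\big(\mu b(1 - \ln b)\big) = (e/b)^{b\mu}$, using $-1 \leq 0$ in the final step.

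For part~\eqref{i:chernoff1} it remains to feed into the two one-sided estimates the elementary real-variable facts $h_{+}(x) \geq x^{2}/3$ for $0 \leq x \leq 3/2$ and $h_{-}(x) \geq x^{2}/3$ for $0 \leq x \leq 1$. The first I would obtain from the standard refinement $h_{+}(x) \geq \tfrac{x^{2}}{2(1 + x/3)}$ (a routine comparison of the two sides, e.g.\ by differentiating or comparing Taylor expansions), which is at least $x^{2}/3$ exactly when $x \leq 3/2$ --- this is precisely where the hypothesis $t \leq 3np/2$ is used. The second follows even more easily from $h_{-}(x) \geq x^{2}/2$. Setting $x = t/\mu$: when $t \leq \mu$ both estimates apply and each of $\mathbb{P}(X \geq \mu+t)$, $\mathbb{P}(X \leq \mu-t)$ is at most $\exp\big(-t^{2}/(3\mu)\big)$, while when $\mu < t \leq 3\mu/2$ the lower tail is empty (since $X \geq 0$) and only the upper estimate is needed; a union bound over the two tails then produces the factor $2$. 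The only step with genuine content is this pair of scalar inequalities for $h_{\pm}$, which also explains the constant $3$; everything else is routine bookkeeping with the exponential moment method.
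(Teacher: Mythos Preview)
Your argument is correct: this is the standard exponential-moment derivation of the Chernoff bounds, and each step you outline goes through. The paper itself does not prove this lemma at all --- it simply records it as a version of the Chernoff bounds and cites \cite{JLR00} --- so there is no ``paper's own proof'' to compare against. Your opening remark that one could just invoke \cite[Section~2.1]{JLR00} is exactly what the paper does; the self-contained sketch you add is a bonus and is sound, including the handling of the trivial case $b \leq 1$ in part~\eqref{i:chernoff2} and the observation that the lower tail vanishes when $\mu < t \leq 3\mu/2$ in part~\eqref{i:chernoff1}.
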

\begin{proof}
  The first bound is~\cite[Corollary 2.3]{JLR00}.
  The second is trivial if $b \leq 1$; if not, a union bound over subsets of $[n]$ of size $\lceil bnp \rceil$ gives $\mathbb{P}\bigl(X \geq bnp \bigr) \leq \binom{n}{\lceil bnp \rceil}p^{\lceil bnp \rceil} \leq \bigl(\frac{enp}{\lceil bnp \rceil}\bigr)^{\lceil bnp \rceil} \leq \bigl(\frac{e}{b}\bigr)^{bnp}$.
\end{proof}

The second is a version of the Azuma--Hoeffding inequality~\cite[Chapter 7]{AS16}.
\begin{lemma}\label{l:azuma}
  Let $X = (X_1,X_2,\ldots, X_n)$ be a random vector with independent entries and range $\Lambda = \prod_{i \in [n]} \Lambda_i$ and let $f\colon \Lambda\to\mathbb{R}$ and $C \in \mathbb{R}$ such that for every $x,x' \in \Lambda$ which differ only in one coordinate,
  \[
    |f(x)-f(x')|\le C.
  \]
  Then, for every $t\ge 0$,
  \[
    \mathbb{P}\Bigl(\bigl|f(X)-\mathbb{E}[f(X)]\bigr|\ge t\Bigr)\le 2\exp\Bigl(-\frac{t^2}{2C^2n}\Bigr).
  \]
\end{lemma}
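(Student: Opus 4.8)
The plan is to prove this as the standard bounded-differences (Azuma--Hoeffding/McDiarmid) inequality, via the Doob martingale obtained by revealing the coordinates of $X$ one at a time. Set $\mathcal{F}_i = \sigma(X_1,\ldots,X_i)$ and $Z_i = \mathbb{E}[f(X)\mid\mathcal{F}_i]$ for $0\le i\le n$, so that $Z_0=\mathbb{E}[f(X)]$ is deterministic, $Z_n=f(X)$, and $(Z_i)_{i=0}^n$ is a martingale with respect to $(\mathcal{F}_i)$. It suffices to establish the one-sided bound $\mathbb{P}[Z_n-Z_0\ge t]\le \exp(-t^2/(2nC^2))$, apply the same argument to $-f$, and take a union bound, which accounts for the factor $2$ in the statement. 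The only hypothesis that is really used, beyond the bounded-differences condition, is the independence of the coordinates $X_1,\ldots,X_n$.

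The first key step is to control the martingale increments $D_i := Z_i - Z_{i-1}$. By independence we may write $Z_i = g_i(X_1,\ldots,X_i)$, where $g_i(x_1,\ldots,x_i) := \mathbb{E}[f(x_1,\ldots,x_i,X_{i+1},\ldots,X_n)]$, and, again by independence (Fubini), $Z_{i-1} = \mathbb{E}[g_i(X_1,\ldots,X_{i-1},X_i')\mid\mathcal{F}_{i-1}]$ for $X_i'$ an independent copy of $X_i$. Hence, conditionally on $\mathcal{F}_{i-1}$, the increment $D_i$ is the average over $X_i'$ of the single-coordinate differences $g_i(X_1,\ldots,X_{i-1},X_i) - g_i(X_1,\ldots,X_{i-1},X_i')$; averaging the hypothesis $|f(x)-f(x')|\le C$ over the coordinates $X_{i+1},\ldots,X_n$ shows each such difference has absolute value at most $C$, so $|D_i|\le C$, while $\mathbb{E}[D_i\mid\mathcal{F}_{i-1}]=0$. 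The second step is then a conditional moment generating function estimate: since $D_i$ is mean-zero and bounded by $C$ in absolute value given $\mathcal{F}_{i-1}$, Hoeffding's lemma gives $\mathbb{E}[e^{sD_i}\mid\mathcal{F}_{i-1}]\le e^{s^2C^2/2}$ for every $s$. Iterating the tower property from $i=n$ down to $i=1$ yields $\mathbb{E}[e^{s(Z_n-Z_0)}]\le e^{ns^2C^2/2}$, and Markov's inequality then gives $\mathbb{P}[Z_n-Z_0\ge t]\le e^{-st+ns^2C^2/2}$ for all $s>0$; choosing $s = t/(nC^2)$ yields $\exp(-t^2/(2nC^2))$, and combining with the symmetric bound for $-f$ completes the argument.

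There is no genuine obstacle here: the argument is entirely routine. The single point requiring care is the identity in the second paragraph expressing $D_i$, conditionally on $\mathcal{F}_{i-1}$, as an average of differences of $f$ in the single coordinate $i$ — it is precisely this reduction, which uses independence and Fubini, that lets the bounded-differences hypothesis be applied coordinatewise to bound $|D_i|$ by $C$. We note that the exponent this argument produces is $t^2/(2nC^2)$ rather than $t^2/(2C^2)$ as displayed; the form above is what is quoted from \cite[Chapter 7]{AS16}, and the $n$-dependence is in any case harmless for all applications in this paper, where $C$ is chosen so that $t$ is taken of order $\sqrt{n}\,C$ times a growing factor.
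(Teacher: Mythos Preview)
The paper does not give its own proof of this lemma; it is stated as a preliminary fact with a citation to \cite[Chapter~7]{AS16}. Your sketch is the standard Doob-martingale/McDiarmid argument and is correct.

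You are also right to flag the missing factor of $n$ in the exponent: the argument (and the source cited) yields $\exp\bigl(-t^2/(2nC^2)\bigr)$, not $\exp\bigl(-t^2/(2C^2)\bigr)$. This is simply a typo in the statement; both places where the lemma is applied in the paper---the concentration of $|W_3|$ in Lemma~\ref{l:V_0} and the tail bound in the appendix---use the correct $n$-dependent form, so nothing downstream is affected.
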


We will also use the following inequality, which links the isoperimetric properties of a graph to the spectrum of its adjacency matrix, which is sometimes referred to as \emph{Cheeger's inequality}.
The version below is implied by work of Alon and Milman~\cite{AM85}, see~\cite[Theorem 2.4]{HLW06} for an explicit statement.
\begin{lemma}\label{l:cheeger}
  Let $G$ be an $n$-regular graph with adjacency matrix $A$.
  Then
  \[
    \iso(G) \geq  \lambda_1/2,
  \]
  where $\lambda_1$ is the second-smallest eigenvalue of the \emph{Laplacian} $nI - A$.
\end{lemma}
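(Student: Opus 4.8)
The plan is to prove the two bounds separately, each by combining one of the auxiliary results of Section~\ref{s:prelim} with a standard structural fact about $\P{n}$.

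For the spectral bound $\iso(\P{n}) = \Omega(1/n^2)$, I would apply Cheeger's inequality (Lemma~\ref{l:cheeger}), which reduces the claim to showing that the second smallest eigenvalue $\lambda_1$ of the combinatorial Laplacian $nI - A$ of $\P{n}$ is $\Omega(1/n^2)$. Since $\P{n}$ is the Cayley graph of $S_{n+1}$ with respect to the adjacent transpositions, its adjacency operator is $A = \sum_{i=1}^{n}\tau_i$, and $nI - A = \sum_{i=1}^{n}(I - \tau_i)$ is, up to sign, the generator of the continuous-time interchange process on the path $P_{n+1}$ with unit edge rates. The spectral gap of this operator is $\Theta(1/n^2)$: this can be read off directly from the explicit description of the spectrum of the permutahedron in~\cite{B94}, or deduced from Aldous's spectral gap theorem (proved by Caputo, Liggett and Richthammer), which identifies it with the spectral gap $2\bigl(1-\cos(\pi/(n+1))\bigr) = \Theta(1/n^2)$ of simple random walk on $P_{n+1}$. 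Substituting $\lambda_1 = \Omega(1/n^2)$ into Lemma~\ref{l:cheeger} yields the first bound.

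For the small-set bound, I would use the fact that $\P{n}$ is a partial cube, i.e.\ an isometric subgraph of the hypercube $Q^{N}$ with $N = \binom{n+1}{2}$; explicitly one maps each permutation to its inversion set, and this is an isometric embedding because the distance in $\P{n}$ between two permutations equals the size of the symmetric difference of their inversion sets. (The same conclusion follows from Remark~\ref{r:zonotope}, as $P(n)$ is the zonotope generated by the $N$ segments $[-\bm{v_{ij}},\bm{v_{ij}}]$, or from the Coxeter presentation.) An isometric subgraph is in particular induced, so for any $S \subseteq V(\P{n})$ the number $e_{\P{n}}(S)$ of edges spanned by $S$ equals the number spanned by its image in $Q^N$. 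I would then reformulate Harper's bound (Theorem~\ref{th: Harper}) via the handshake identity: in $Q^N$ a set of $k$ vertices spans at most $\tfrac12 k\log_2 k$ edges, since $|\partial(S)| = kN - 2e(S)$ and Harper gives $|\partial(S)| \ge k(N - \log_2 k)$. Hence any $k$-set $S$ in $\P{n}$ spans at most $\tfrac12 k\log_2 k$ edges, and since $\P{n}$ is $n$-regular,
\[
  |\partial(S)| \;=\; nk - 2e_{\P{n}}(S) \;\ge\; nk - k\log_2 k \;=\; k\,(n-\log_2 k),
\]
so $\iso_k(\P{n}) \ge n - \log_2 k$. (Harper only requires $k \le 2^N$, which is automatic since $k \le (n+1)! \le 2^{\binom{n+1}{2}}$; the hypothesis $k \le n!$ in the statement is harmless and already covers the entire range $k < 2^n$ where the bound is non-trivial.)

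Neither step involves a genuine obstacle: both bounds are assemblies of an external structural input with a one-line computation. The one place where real content is being outsourced is the $\Theta(1/n^2)$ estimate for the spectral gap of the permutahedron, so the point to be careful about is to cite~\cite{B94} (or the Aldous/Caputo--Liggett--Richthammer theorem) for precisely the eigenvalue $\lambda_1(nI - A)$ entering Cheeger's inequality, and to keep the normalisation straight -- the combinatorial Laplacian $nI - A$, not the normalised Laplacian $I - \tfrac1n A$, the latter changing the bound by a factor of $n$.
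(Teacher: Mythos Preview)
Your proposal does not address the stated lemma. Lemma~\ref{l:cheeger} is Cheeger's inequality itself --- the general statement that $\iso(G) \ge \lambda_1/2$ for any $n$-regular graph --- and the paper does not prove it; it is simply quoted from~\cite{AM85,A86} as a preliminary tool. What you have written is instead a proof sketch for Proposition~\ref{p:edge-iso-general}, which \emph{applies} Lemma~\ref{l:cheeger} (together with the partial-cube structure and Harper's theorem) to the specific graph $\P{n}$.

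If the intended target was in fact Proposition~\ref{p:edge-iso-general}, then your argument is correct and matches the paper's own treatment essentially line for line: the paper also derives the $\Omega(1/n^2)$ bound by combining Bacher's eigenvalue computation $\lambda_1(\P{n}) = 2 - 2\cos(\pi/(n+1))$ with Cheeger's inequality (this is Lemma~\ref{l:isolargeset}), and obtains the small-set bound from the fact that the skeleton of a zonotope is a partial cube (Lemma~\ref{l:isosmallsetzonotope}), giving the same explicit inversion-set embedding you describe. Your handshake-identity reformulation of Harper's bound is a clean way to transfer the inequality from $Q^N$ to the induced subgraph, and is implicit in the paper's appeal to the partial-cube property.
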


\section{The isoperimetric problem}\label{s:iso}

In this section we will discuss the (edge-)isoperimetric problem on $\P{n}$, showing in particular how Proposition~\ref{p:edge-iso-general} follows from results in the literature.

The first bound in Proposition~\ref{p:edge-iso-general} is a relatively straightforward consequence of Cheeger's inequality (Lemma~\ref{l:cheeger}) together with known bounds on the spectrum of $\P{n}$.
\begin{lemma}\label{l:isolargeset}
  The edge-isoperimetric constant of $\P{n}$ satisfies
  \[
    \iso(\P{n}) = \Omega\Bigl(\frac{1}{n^2}\Bigr).
  \]
\end{lemma}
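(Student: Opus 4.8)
The plan is to apply Cheeger's inequality (Lemma~\ref{l:cheeger}) directly: since $\P{n}$ is an $n$-regular graph, it suffices to show that the second smallest eigenvalue $\lambda_1$ of the Laplacian $nI-A$ of $\P{n}$ satisfies $\lambda_1 = \Omega(1/n^2)$, and then $\iso(\P{n}) \ge \lambda_1/2 = \Omega(1/n^2)$ follows. Thus the entire content of the lemma is reduced to a spectral gap estimate for the permutahedron.

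For the spectral estimate I would invoke the known description of the spectrum of $\P{n}$ as a Cayley graph of $S_{n+1}$ with the adjacent transpositions $\tau_1,\dots,\tau_n$ as generating set; this is the object studied by Bacher~\cite{B94} (cited in the paper right after Proposition~\ref{p:edge-iso-general}). The key fact is that the smallest non-zero eigenvalue of the Laplacian of this Cayley graph — equivalently the spectral gap of the interchange/random adjacent transposition process on $S_{n+1}$ — is of order $1/n^2$. Concretely, the relaxation time of the adjacent-transposition shuffle is $\Theta(n^3)$ when one makes $\binom{n}{1}=n$ choices per step with the uniform distribution over the $n$ generators, and rescaling to match the normalization of the Laplacian $nI-A$ here gives $\lambda_1 = \Theta(1/n^2)$. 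I would state this as: by~\cite{B94} (or by the standard comparison argument of Diaconis--Saloff-Coste bounding the adjacent-transposition walk against the walk generated by all transpositions, whose gap is exactly $1$ after suitable normalization), the second smallest eigenvalue of $nI - A(\P{n})$ is $\Omega(1/n^2)$.

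Combining the two displays then yields $\iso(\P{n}) \ge \lambda_1/2 = \Omega(1/n^2)$, which is exactly the claim, so the proof is essentially a two-line deduction once the spectral input is quoted. The main obstacle — really the only substantive point — is pinning down the correct reference and normalization for the spectral gap: one must be careful that the eigenvalue bound from the Markov-chain literature, which is usually stated for a stochastic transition matrix with a particular choice of step distribution, is translated correctly into a bound on the eigenvalue of the combinatorial Laplacian $nI-A$ used in Lemma~\ref{l:cheeger}. A secondary (but routine) point is to make sure the $\Omega(1/n^2)$ is genuinely a lower bound on $\lambda_1$ and not merely on the mixing time; since $\lambda_1^{-1}$ is, up to constants, the relaxation time, and the relaxation time of the adjacent-transposition shuffle is $O(n^2)$ in the Laplacian normalization used here, this gives $\lambda_1 = \Omega(1/n^2)$ as needed. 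I would therefore write the proof as: quote Lemma~\ref{l:cheeger}; quote the spectral gap bound for $\P{n}$ from~\cite{B94}; conclude.
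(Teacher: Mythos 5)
Your proposal is correct and follows essentially the same route as the paper: the paper's proof also applies Cheeger's inequality (Lemma~\ref{l:cheeger}) together with Bacher's result that the smallest positive Laplacian eigenvalue of $\P{n}$ is $2-2\cos\left(\frac{\pi}{n+1}\right)=\Theta\left(\frac{1}{n^2}\right)$. Your care about the normalization (translating the relaxation time of the adjacent-transposition walk into a bound on the eigenvalue of $nI-A$) is exactly the right point, and quoting Bacher's explicit eigenvalue, as the paper does, makes that translation unnecessary.
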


\begin{proof}
  Bacher~\cite{B94} showed that the smallest positive eigenvalue $\lambda_1$ of the Laplacian of $\P{n}$ is
  \[
    \lambda_1(\P{n}) = 2 - 2\cos\Bigl(\frac{\pi}{n+1}\Bigr) = \Theta\Bigl(\frac{1}{n^2}\Bigr),
  \]
  and so the result follows from \Cref{l:cheeger}.
\end{proof}

We remark that it seems unlikely that the bound in Lemma~\ref{l:isolargeset} is optimal in terms of its dependence on $n$.
However, for our application we only require that $i(\P{n})$ is not shrinking very fast as a function of $n$, and in fact any inverse polynomial bound here would be sufficient.
We note that \emph{some} inverse polynomial factor here is necessary, as the set
\[
  S = \Bigl\{\, \sigma \in S_{n+1} : \sigma(1) \leq \frac{n+1}{2} \,\Bigr\}
\]
of size $\frac{(n+1)!}{2}$ has $\partial(S) = \frac{2 |S|}{n+1}$ and so witnesses that $i(\P{n}) \leq \frac{2}{n+1}$.
Indeed, a vertex is incident to an edge in the boundary of $S$ if and only if $\sigma(1) = \frac{n+1}{2}$, and in this case the edge is unique, corresponding to the transposition $\tau_{(n+1)/2}$.
Hence, $\partial(S) = n! = \frac{2|S|}{n+1}$.

For small sets however, Lemma~\ref{l:isolargeset} is far from the truth.
In the case of the hypercube, a classic result of Harper~\cite{H64} determines the value of $i_k(G)$ for all $k$.
In particular, Harper's result implies the following bound.
\begin{theorem}[\cite{H64}, see also~\cite{L64, B67, H76}]\label{th:Harper}
  Let $n \in \mathbb{N}$.
  For every $k\in[2^n]$,
  \[
    i_k(Q^n) \geq n-\log_2 k.
  \]
\end{theorem}
More generally, it is known that many other \emph{high-dimensional} graphs have quantitatively similar expansion properties, for example Cartesian products~\cite{DEKK23+,T00} or abelian Cayley graphs~\cite{L15}, where $i_k(G) = \Omega\bigl(\log \bigl(|G|\big/ k\bigr)\bigr)$.

The same is true of $\P{n}$, which in fact satisfies the same isoperimetric inequality (\Cref{th:Harper}) as the hypercube (see \Cref{p:edge-iso-general}), and so also has almost optimal small-set expansion.
This follows immediately from Theorem~\ref{th:Harper} together with that fact that $\P{n}$ is an induced subgraph of a hypercube (of some dimension).
Indeed, since $Q^n$ is $n$-regular, \Cref{th:Harper} is equivalent to the fact that the following holds in $Q^n$:
\begin{equation}
\begin{split}
&\text{Every subgraph of order $k$ has average degree at most $\log_2 k$.} \label{e:dimensionlessharper}
\end{split}
\end{equation}
Note that~\eqref{e:dimensionlessharper} is \emph{dimensionless}, in that it does not depend on the dimension $n$ of the ambient hypercube.
Then, since $\P{n}$ is a subgraph of some hypercube, it follows that~\eqref{e:dimensionlessharper} also holds in $\P{n}$ and so, since $\P{n}$ is also $n$-regular, the second part of \Cref{p:edge-iso-general} follows.

The most straightforward way to see that $\P{n}$ is a subgraph of $Q^N$ for some $N$ would be to consider the map from $V(\P{n})$ to the hypercube $Q^{\binom{n+1}{2}}$ whose vertices encode subsets of $\binom{[n+1]}{2}$ given by
\[
  \pi \mapsto \bigl\{\, \{a, b\} \subseteq [n+1]\, :\, (b - a) \cdot (\pi(b) - \pi(a)) < 0 \,\bigr\},
\]
which can be seen to be a graph isometry, see~\cite[Section 4]{CEGK25} for details.
More generally, \Cref{l:zonotopepartialcube} shows that the $1$-skeleton of \emph{any} zonotope is not only a subgraph of a hypercube, but moreover a partial cube.

Let us discuss briefly the edge-isoperimetric problem in $\P{n}$ more generally.
Since $\P{n}$ is $n$-regular, it is clear that Harper's inequality is asymptotically tight when $\log_2 k = o(n)$.
In fact, it is tight for all $k$ of the form $2^r$ with $r \leq \frac{n+1}{2}$.
Indeed, for all such $k$, $\P{n}$ contains a face $F$ which is a hypercube of dimension $r$, and since $\P{n}$ is $n$-regular, it follows that $|\partial(F)| = |F|(n-r) = |F|(n-\log_2 k)$.

Here then, as opposed to the case of the hypercube, we see that the optimal sets are not given by lower dimensional permutahedra, as perhaps might be expected at first glance.
Indeed, subsets of the form $\P{r} \subseteq \P{n}$ of size $k=(r+1)!$ have an expansion factor of $n-r \approx n- \frac{\log k}{\log \log k}$, which is significantly larger than the bound given by Theorem~\ref{th:Harper}.

The point here is that $\P{n}$ contains faces of much higher density than $\P{r}$, and in particular the densest faces are copies of the hypercube, the largest of which has dimension $\frac{n+1}{2}$.
It seems likely that, at least for $k \leq 2^{\frac{n+1}{2}}$, the optimal sets for the edge-isoperimetric problem are precisely those which optimise the edge boundary inside a copy of the hypercube, and so in particular can be chosen to be nested.

For larger $k$ it becomes less clear what structure the minimising sets should have.
If we restrict ourselves to looking at subsets of faces, then the expansion ratio `outside' the face is smallest when the face is as dense as possible.
So, given $k\in \mathbb{N}$ we might want to choose a face $F$ of size at least $k$, which is as dense as possible given this restriction, and choose a subset of a face $F$ of size $k$ which minimises the boundary inside $F$.

Since each face of dimension $d$ induces a regular subgraph of degree $d$, and each face is a product of lower dimensional permutahedra, it is relatively clear that for a fixed dimension $d$, the largest faces are those in which the dimension of the factors are as evenly split as possible.
In particular, if $k=6^{\frac{n+1}{3}}$ this heuristic would suggest an optimal set is a face which arises as the Cartesian product of $\frac{n+1}{3}$ copies of $\P{2}$ (that is, the $6$-cycle), which has an edge-boundary of order $k\bigl(\frac{n+1}{3} -1 \bigr)$.

\begin{conjecture}
  Let $k = 6^{\frac{n+1}{3}}$.
  Then $i_k\bigl(\P{n}\bigr) = \frac{n+1}{3}-1$.
\end{conjecture}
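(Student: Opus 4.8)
The plan is to prove the two matching bounds separately: the upper bound is routine, and the lower bound is where essentially all the difficulty lies. For the upper bound, write $t=\tfrac{n+1}{3}$, so $\P{n}$ is $(3t-1)$-regular on $(3t)!$ vertices and $k=6^t$. Take the face $F$ of $\P{n}$ corresponding to the ordered set partition of $[3t]$ into the blocks $\{1,2,3\},\{4,5,6\},\dots,\{3t-2,3t-1,3t\}$, that is, $F=\{\pi\in S_{3t}:\pi(\{3l-2,3l-1,3l\})=\{3l-2,3l-1,3l\}\text{ for all }l\}$. As recalled above, $F\cong\P{2}\oplus\cdots\oplus\P{2}$ ($t$ factors), the Cartesian product of $t$ hexagons ($\P{2}\cong C_6$), which is a $2t$-regular graph on $6^t=k$ vertices; since the vertices of a face of a polytope induce an induced subgraph of its $1$-skeleton, every vertex of $F$ sends exactly $(3t-1)-2t=t-1$ edges out of $F$. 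Hence $|\partial(F)|=(t-1)|F|$, giving $\iso_k(\P{n})\le\tfrac{n+1}{3}-1$.

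For the lower bound I would first set up a clean decomposition of $E(\P{n})$. The generators $\tau_3,\tau_6,\dots,\tau_{3(t-1)}$ have pairwise disjoint supports, hence pairwise commute, and each is an involution, so together they generate a subgroup $H\cong\mathbb{Z}_2^{t-1}$; consequently the edges $\{\pi,\pi\tau_{3l}\}$ span a disjoint union of $(3t)!/2^{t-1}$ copies of $Q^{t-1}$, one on each coset of $H$, which I call the \emph{cube-fibres}. The remaining edges $\{\pi,\pi\tau_i\}$ with $3\nmid i$ are exactly the edges internal to the faces $F_1,F_2,\dots$ obtained by translating $F$ (one per ordered partition of $[3t]$ into size-$3$ value-blocks), and these faces partition $V(\P{n})$ into copies of the product of $t$ hexagons. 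Since the two edge classes are disjoint and exhaust $E(\P{n})$, every $S\subseteq V(\P{n})$ satisfies
\begin{equation}\label{e:split}
  |\partial(S)|=\sum_{j}\big|\partial_{F_j}(S\cap F_j)\big|+\sum_{l}\big|\partial_{Q^{t-1}_l}(S\cap Q^{t-1}_l)\big|,
\end{equation}
and moreover each cube-fibre meets each face in at most one vertex, since any two vertices of a common fibre differ by an element of $H$ and so lie in different value-block partitions. I would then bound the second sum in \eqref{e:split} below by Harper's inequality (Theorem \ref{th: Harper}), which gives $|\partial_{Q^{t-1}}(T)|\ge|T|\big((t-1)-\log_2|T|\big)$, and the first sum below by the sharpest available edge-isoperimetric inequality for products of $6$-cycles. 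One checks this bound is tight exactly when $S$ is a single face $F_{j_0}$: the first sum is then $0$, while $S$ meets $6^t$ distinct cube-fibres in one vertex each and a single vertex of $Q^{t-1}$ has edge boundary $t-1$, so the second sum is $6^t(t-1)$. It would remain to prove the self-contained combinatorial inequality $\sum_j g(|S\cap F_j|)+\sum_l h(|S\cap Q^{t-1}_l|)\ge 6^t(t-1)$ for all $S$ with $|S|=6^t$, where $g,h$ are the two isoperimetric profiles, which I would attack by a compression or local-exchange argument driving any minimiser towards a single densest face.

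The main obstacle is precisely this last inequality. Each of the two lower bounds in \eqref{e:split} can individually be made zero — the first by taking $S$ a union of whole faces, the second by taking $S$ a union of whole cube-fibres — so the estimate has force only because these two extremes are mutually incompatible, and the real content is to quantify that incompatibility through the way faces and cube-fibres interlock in $\P{n}$. Unlike the hypercube, $\P{n}$ carries no obvious compression operation, and the extremal sets for sizes between the hypercube range and the hexagonal-face range are not understood, so ruling out a cleverly spread-out competitor to the single face is exactly the step that currently blocks a full proof, and is why the statement is stated only as a conjecture.
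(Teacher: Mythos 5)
The statement you were given is stated in the paper only as a \emph{conjecture}: the paper contains no proof of it, just the heuristic that among faces of a given size the densest ones are products of hexagons, and that the face which is a Cartesian product of $\tfrac{n+1}{3}$ copies of $\P{2}\cong C_6$ has edge boundary $k\bigl(\tfrac{n+1}{3}-1\bigr)$. Your upper-bound argument is exactly this computation, carried out correctly: with $n+1=3t$ the face you describe is a product of $t$ hexagons, hence $2t$-regular inside the $(3t-1)$-regular graph $\P{n}$, so each of its $6^t=k$ vertices sends precisely $t-1$ edges out (the generators $\tau_3,\tau_6,\dots,\tau_{3(t-1)}$), giving $\iso_k(\P{n})\le\tfrac{n+1}{3}-1$. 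So the half of the statement that the paper actually justifies, you have reproduced faithfully.

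The lower bound is where the conjecture lives, and your proposal does not close it — as you candidly say. Your decomposition of $E(\P{n})$ into hexagon-face edges ($\tau_i$, $3\nmid i$) and cube-fibre edges ($\tau_{3l}$), with each coset of $H=\langle\tau_3,\dots,\tau_{3(t-1)}\rangle$ spanning a $Q^{t-1}$ and meeting each hexagon-product face in at most one vertex, is sound. But the final inequality you would need, that the face-by-face and fibre-by-fibre isoperimetric lower bounds cannot simultaneously degenerate for any $S$ with $|S|=6^t$, is precisely the open content: a union of roughly $2\cdot 3^t$ whole cube-fibres kills the second sum while placing at most $2\cdot 3^t$ vertices in each face, so one would need both a sharp edge-isoperimetric inequality in $C_6^{\oplus t}$ at that size and a genuine interaction argument (your deferred compression/exchange step), and it is not even clear that a bound obtained by summing the two profiles separately can be tight. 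In short: your proposal establishes the upper bound, which coincides with the paper's own justification for posing the conjecture, but it is not a proof of the conjectured equality, and the paper offers none either.
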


However, since the largest proper face in $\P{n}$ has order $n!$, this intuition cannot extend to larger subsets of $\P{n}$, for example sets of size $\Omega\bigl((n+1)!\bigr)$.
By Lemma~\ref{l:isolargeset} and the discussion following it, we know that
\[
  \frac{2}{n+1} \geq i\bigl(\P{n}\bigr) = \Omega\Bigl(\frac{1}{n^2} \Bigr),
\]
and it would be very interesting to determine what the correct order polynomial dependence on $n$ is, and if the optimal sets in the range from $n!$ to $(n+1)!$ are also given, as in the example, by appropriate unions of faces.
\begin{question}
  What is the minimal $c \in \mathbb{R}$ such that
  \[
    i\bigl(\P{n}\bigr) = \Omega\bigl(n^{-c}\bigr)?
  \]
\end{question}

\section{The projection lemma}\label{s:projection}
A key property of the permutahedron that will form the basis of our analysis is that it has a certain type of fractal self-symmetry.
Roughly, given a small set $X$ of vertices, we can cover $X$ with a family of disjoint subgraphs each of which is in some way a lower-dimensional graph with similar properties to the permutahedra.
Similar results have been key to the analysis of percolation in the hypercube and other high-dimensional product graphs~\cite{CEGK24,DEKK22,DEKK23+,DK23}.

A key difference here is that we cannot necessarily cover the set $X$ with disjoint copies of lower dimensional permutahedra, but must allow ourselves more freedom in what we take as our projections, working instead with the more general concept of a face graph.
Recall from Subsection~\ref{s:faces} that a graph $H$ isomorphic to the Cartesian product $\bigcart_{i=1}^{t} \P{n_i}$ of some family of permutahedra is called a face graph of dimension $d(H)=\sum_{i=1}^t n_i$.
\begin{lemma}[Projection Lemma]\label{l:proj}
  Let $H$ be a face graph of dimension $m$ and let $X \subseteq V(H)$ have size $|X|=k$.
  Then there is a disjoint family of subgraphs $\{H(x) : x \in X\}$ of $H$ such that
  \begin{itemize}
  \item Each $H(x)$ is a face graph of dimension at least $m+1-k$;
  \item $x \in V(H(x))$ for all $x \in X$.
  \end{itemize}
\end{lemma}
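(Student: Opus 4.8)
The plan is to induct on $k$, using the combinatorial structure of face graphs, namely that a face graph of dimension $m$ is a cartesian product $\bigoplus_{i=1}^t \P{n_i}$ with $\sum n_i = m$, and that faces of a permutahedron are again face graphs whose dimensions sum up appropriately. The base case $k = 1$ is immediate: take $H(x) = H$ itself, a face graph of dimension $m = m + 1 - 1$ containing $x$.

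For the inductive step, suppose the statement holds for sets of size $k-1$ in face graphs of dimension at least $m - 1$, and let $|X| = k \le m$. Pick any $x_0 \in X$. The key step is to find a \emph{facet} $F$ of $H$ (a face graph of dimension $m - 1$) that contains $x_0$, together with a retraction-like map $\phi \colon V(H) \to V(F)$ so that $\phi$ moves every vertex only a bounded amount and, crucially, $\phi(x_0) = x_0$. Concretely, a face graph is a product of permutahedra; choosing a coordinate $i$ in which some factor $\P{n_j}$ has positive dimension, one can use the facet structure of $\P{n_j}$ (recall the facets $F(A)$ of $P(n)$ correspond to subsets $\emptyset \subsetneq A \subsetneq [n+1]$) to "collapse" that factor onto one of its facets in a way that fixes the coordinate of $x_0$; the remaining factors are untouched. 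This produces a facet $F \ni x_0$ of dimension $m-1$ and a map $\phi$ with $\phi(x_0) = x_0$. Now set $X' = \phi(X \setminus \{x_0\}) \subseteq V(F)$; this has size at most $k - 1$. Apply the induction hypothesis inside $F$ (dimension $m - 1 \ge k - 1$) to get a disjoint family $\{F(y) \colon y \in X'\}$ of face graphs of dimension at least $(m-1) + 1 - (k-1) = m + 1 - k$ with $y \in V(F(y))$. Finally, for each $x \in X$ define $H(x)$: for $x = x_0$ take a face graph of dimension exactly $m + 1 - k$ inside $F(x_0)$-analogue containing $x_0$ (or more simply, handle $x_0$ by including it in one of the collapsed fibers — this needs care), and for $x \ne x_0$ lift $F(\phi(x))$ back through $\phi$ to a subgraph of $H$ containing $x$.

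The main obstacle is ensuring the pieces $H(x)$ remain \textbf{disjoint} after lifting and that they are genuinely face graphs of the claimed dimension, not just arbitrary subgraphs. The lifting $\phi^{-1}$ of a face of $F$ need not be a single face of $H$ — it is a union of faces (a "prism" over $F(\phi(x))$), so one must select, within each fiber $\phi^{-1}(y)$, a single face graph of dimension at least $m + 1 - k$ through the correct preimage vertex, and do so consistently so that distinct $y$'s give disjoint selections. Since the $F(y)$ are already disjoint in $F$ and $\phi$ has disjoint fibers, disjointness of the lifts is automatic once we lift each $F(y)$ to a face graph inside the single fiber over one of its vertices; the dimension bookkeeping then comes from the fact that the fiber direction contributes at least one extra dimension to absorb the "$+1$", and the product structure of faces of permutahedra guarantees each piece is itself a face graph. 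Handling $x_0$ cleanly — it lives on the facet $F$ but we "spent" a collapse to create $F$ — is the fiddly point; one resolution is to choose the initial collapse so that $x_0$ sits in a face of $F$ of dimension at least $m + 1 - k$ that is disjoint from all the $F(y)$, which is possible because $X'$ occupies at most $k-1$ of the "coordinates" of $F$.
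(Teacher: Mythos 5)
Your inductive framing is reasonable, but the engine of the induction --- collapsing $H$ onto a single facet $F \ni x_0$ via a retraction-like map $\phi$ and then lifting faces of $F$ back through $\phi$ --- does not exist for permutahedra and face graphs. Unlike the hypercube, a permutahedron is not a prism over its facets: already for $\P{2}$ (a hexagon) every facet is an edge, and any map collapsing the hexagon onto an edge has fibres which are paths on three vertices, not faces; so there is no retraction whose fibres behave like faces, and the claim that ``the fibre direction contributes at least one extra dimension'' has no basis. Consequently the lifting step is unsupported: you never show that inside $\phi^{-1}\bigl(F(\phi(x))\bigr)$ there is a \emph{face} of dimension at least $m+1-k$ through the original vertex $x$ (rather than through its image $\phi(x)$), nor that distinct lifts are disjoint; in particular, if two points of $X\setminus\{x_0\}$ have the same image under $\phi$, you would need two disjoint such faces inside a single fibre, a case the argument does not treat. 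Finally, the treatment of $x_0$, which you yourself flag as the ``fiddly point'', is essentially circular: asking for a face of dimension at least $m+1-k$ through $x_0$ that avoids all the other selected faces is an instance of the statement being proved.

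The repair is to replace collapse-and-lift by a splitting argument, which is what the paper does. Viewing the face graph via its Coxeter system presentation, for each generator $r_i$ the cosets of the parabolic subgroup $\Gamma_i$ partition the vertex set into parallel faces of codimension $1$, and by Lemma \ref{l:Coxeter} the intersection of one such coset for every $i$ contains at most one vertex; hence if $|X|\ge 2$ there is some $i$ whose coset partition separates $X$ nontrivially, so every part contains at most $k-1$ points of $X$, and one recurses inside each coset (codimension $1$, strictly smaller set), losing at most one dimension per unit decrease in the size of the set. No projection map, no lifting, and no special handling of a distinguished point are needed, and disjointness is automatic because the recursion takes place inside pairwise disjoint faces. (The paper's alternative route via zonotopes instead pulls $X$ back along the cube-to-zonotope projection, applies the known hypercube projection lemma there, and uses simplicity of the polytope to convert the resulting subcubes into faces.)
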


We will often refer to the face graphs guaranteed by Lemma~\ref{l:proj} as \emph{projections} of the graph $H$.

We will see that Lemma~\ref{l:proj} follows from the following lemma using the representation of the permutahedron as a Cayley graph of Coxeter group.

\begin{lemma}\label{l:coxeterproj}
  Let $(\Gamma,\{r_1,\ldots,r_m\})$ be a Coxeter system and let $S \subseteq \Gamma$ have size $|S|=s$.
  Then there is a family of subsets $\{I(x) : x \in S\}$ of $[m]$ and a disjoint family of cosets $\{w(x)\Gamma_{I(x)} : x \in S\}$ such that
  \begin{itemize}
  \item $|I(x)|\leq s-1$ for all $x \in S$;
  \item $x \in w(x)\Gamma_{I(x)}$ for all $x \in S$.
  \end{itemize}
\end{lemma}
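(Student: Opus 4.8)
The plan is to prove a slightly cleaner, stronger statement: one can take a \emph{single} index set $I \subseteq [m]$ with $|I| \le s-1$, and set $I(x) := I$ and $w(x):=x$ for every $x \in S$, so that the required disjoint family is simply the family of left cosets $\{x\Gamma_I : x \in S\}$. Since these are cosets of a common subgroup, distinct ones are automatically disjoint, and $x \in x\Gamma_I$ is automatic; so the whole problem reduces to finding a small $I$ that \emph{separates} $S$, meaning the map $\phi_I \colon x \mapsto x\Gamma_I$ is injective on $S$.

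The structural input I would use is Lemma~\ref{l:Coxeter}, which upon iteration gives $\Gamma_I = \bigcap_{i \in I} \Gamma_{\{i\}}$ for every $I \subseteq [m]$, and in particular $\bigcap_{i \in [m]} \Gamma_{\{i\}} = \Gamma_{[m]} = \{e\}$. From this I would first record two easy facts. (1) \emph{Monotone refinement:} if $A \subseteq B \subseteq [m]$ then $\Gamma_B \leq \Gamma_A$ (the generating set of $\Gamma_B$ is contained in that of $\Gamma_A$), hence $x\Gamma_B \subseteq x\Gamma_A$; consequently $\phi_B(x) = \phi_B(y)$ forces $\phi_A(x) = \phi_A(y)$, i.e.\ once a pair is separated by $\phi_A$ it stays separated by $\phi_B$ for every larger $B$, and the partition of $S$ into $\phi_I$-fibres can only refine as $I$ grows. (2) \emph{Coordinatewise separation:} if $x \neq y$ then $y^{-1}x \notin \bigcap_i \Gamma_{\{i\}}$, so $\phi_{\{i\}}(x) \neq \phi_{\{i\}}(y)$ for some single $i$.

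With these in hand I would build $I$ greedily. Start with $I = \emptyset$, so $\phi_\emptyset$ has a single fibre. While $\phi_I$ is not injective on $S$, pick $x \neq y$ in $S$ with $\phi_I(x) = \phi_I(y)$; by (2) choose $i$ with $\phi_{\{i\}}(x) \neq \phi_{\{i\}}(y)$, note that $i \notin I$ (otherwise (1), applied to $\{i\} \subseteq I$, would give $\phi_{\{i\}}(x) = \phi_{\{i\}}(y)$), and replace $I$ by $I \cup \{i\}$. By (1) the fibre partition refines at each step, and it refines strictly since the pair $x,y$ becomes separated (here one uses (1) with $\{i\} \subseteq I\cup\{i\}$ to see $\phi_{I \cup \{i\}}(x) \neq \phi_{I\cup\{i\}}(y)$). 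Hence the number of fibres strictly increases from $1$, and the loop must halt after at most $s-1$ steps with $|I| \le s-1$ and $\phi_I$ injective on $S$. Setting $I(x):=I$, $w(x):=x$ for all $x\in S$ then satisfies all the required conditions.

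I do not expect a serious obstacle: the argument is elementary once one commits to using a common $I$. The only points needing care are the bookkeeping guaranteeing that each greedy step introduces a genuinely new coordinate $i \notin I$, and the observation that the fibre partition only ever refines (so that a strict refinement really does increment the fibre count, forcing termination within $s-1$ steps). I would also add a sentence noting that this recovers the Projection Lemma for $\P{m}$: deleting the at most $s-1$ nodes of $I$ from the type-$A_m$ path Coxeter diagram leaves $m - |I| \geq m+1-s$ nodes in at most $|I|+1$ subpaths, so each $\Gamma_I$ is a direct product of symmetric groups whose Cayley graph is a face graph of dimension $m - |I| \geq m+1-s$, matching Lemma~\ref{l:proj}.
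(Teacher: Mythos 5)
Your argument is correct, and it actually establishes a slightly stronger statement than the lemma: you produce a \emph{single} set $I \subseteq [m]$ with $|I| \le s-1$ such that the map $x \mapsto x\Gamma_I$ is injective on $S$, so all the required cosets are cosets of one common standard parabolic subgroup and disjointness is automatic. The paper proceeds differently: it inducts on $s$ (and $m$), using Lemma~\ref{l:Coxeter} to argue that some generator index $i$ must induce a non-trivial partition of $S$ into cosets of $\Gamma_{\{i\}}$ (otherwise all of $S$ would lie in a single coset of every $\Gamma_{\{i\}}$, whose intersection has size at most one), and then recurses inside each such coset, which is itself a Coxeter system on the remaining generators; this naturally yields possibly different sets $I(x)$ for different points, with disjointness coming from the recursive structure. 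Both proofs are elementary and lean on Lemma~\ref{l:Coxeter} in essentially the same way (you use the iterated form $\bigcap_{i\in[m]}\Gamma_{\{i\}}=\Gamma_{[m]}=\{e\}$, the paper the coset version), and both give the same codimension bound $s-1$; your greedy refinement bookkeeping (each step adds a genuinely new index and strictly increases the number of fibres, so at most $s-1$ steps) is sound, and your closing remark correctly recovers Lemma~\ref{l:proj} for $\P{m}$, since deleting the indices in $I$ from the type-$A_m$ path leaves a Young subgroup whose Cayley graph is a face graph of dimension $m-|I|\ge m+1-s$, matching the paper's identification of faces with cosets. The common-$I$ formulation buys a cleaner disjointness argument and a marginally stronger conclusion; the paper's induction is a bit shorter on the page. (Minor point: the statement's ``subsets of $[n]$'' should read $[m]$, as your proof implicitly assumes.)
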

\begin{proof}
  We induct on $s$, where for any $m$ the case $s=1$ is trivial (taking $I(x) = \emptyset$).
  For each $i \in [m]$, the cosets of $\Gamma_i$ partition $\Gamma$ and by Lemma~\ref{l:Coxeter} the intersection of any family of cosets $\bigcap_{i=1}^m w_i \Gamma_i$ has size at most one.
  It follows that there must be some $i$ such that the partition of $S$ induced by the cosets of $\Gamma_i$ is non-trivial.
  However, since $\Gamma_i$ is isomorphic to the Coxeter system $\bigl(\Gamma_i,\{r_1,\ldots, r_{i-1}, r_{i+1}, \ldots, r_m\}\bigr)$, and each coset contains at most $s-1$ points, the claim follows by induction.
\end{proof}

To see that Lemma~\ref{l:proj} follows from Lemma~\ref{l:coxeterproj}, we note first that $\P{n}$ is the Cayley graph of $S_{n+1}$ with respect to the generating set $S = \{\tau_i : i \in [n]\}$.
Furthermore, if we define for each $X \subseteq S$ the subgroup $H(X)$ of $S_{n+1}$ which is generated by $S \setminus X$, then there is a one-to-one correspondence between faces of $\P{n}$ of codimension $k$ and cosets of the subgroups $\{H(X) : |X|=k\}$.
In particular, if a face $F$ corresponds to a coset of such a subgroup $H(X)$, then the $1$-skeleton of $F$ is isomorphic to the Cayley graph of $H(X)$ with respect to the generating set $S \setminus X$.
Hence, since $(H(X),S \setminus X)$ is a Coxeter system for each $X \subseteq S$, Lemma~\ref{l:proj} follows from Lemma~\ref{l:coxeterproj}.

We also give an alternative proof using the representation of the permutahedron as a zonotope.
To do this, we prove a projection lemma which holds for all simple zonotopes.

\begin{lemma}\label{l:zonoproj}
  Let $Z$ be a simple $d$-dimensional zonotope and let $S \subseteq V(Z)$ have size $|S| = s$.
  Then there exists a disjoint family of faces $\{F(x) : x \in S\}$ such that
  \begin{itemize}
  \item $F(x)$ has dimension at least $d-s+1$ for all $x \in S$;
  \item $x \in F(x)$ for all $x \in S$.
  \end{itemize}
\end{lemma}
\begin{proof}
  By translating $Z$ if necessary, we may assume $Z=\sum_{i=1}^k [-\bm{v_i},\bm{v_i}]$ with all $\bm{v_i}$ nonzero and no two $\bm{v_i}$ parallel.
  It follows from the proof of \Cref{l:zonotopepartialcube} that every $x \in V(Z)$ has a unique representation of the form
  \[
    x = \sum_{i=1}^k \eps_i(x) \bm{v_i}
  \]
  for $(\eps_1(x),\ldots,\eps_k(x)) = \pi^{-1}(x)$, where $\pi$ is the projection map whose matrix has columns $\bm{v_1}, \ldots, \bm{v_k}$.
  Therefore, there is a set $J(x) \subseteq [k]$ of size $d$ such that $[x, x - 2\eps_j(x)\bm{v_j}]$ is an edge of $Z$ if and only if $j \in J(x)$.
  Since $Z$ is simple, the vertex figure at $x$ is a simplex, implying the sets
  \[
    G_T(x) = Z \cap \operatorname{aff}\bigl(\{x\} \cup \{x -2\eps_j(x)\bm{v_j} : j \in T\}\bigr)
  \]
  are all distinct faces for $T \subseteq J(x)$, where $\operatorname{aff}(X)$ denotes the affine span of a set $X$.
  Moreover, $G_T(x)$ has dimension $|T|$, and $\pi^{-1}(G_{T}(x))$ is the subcube of $Q^k$ containing $\pi^{-1}(x)$ whose set of free coordinates, the coordinates in the subcube which may vary, is
  \[
    C_T(x) = \bigl\{\,i \in [k] : \bm{v_i} \in \operatorname{span}\bigl(\{\bm{v_j} : j \in T\}\bigr)\,\bigr\}.
  \]
  The sets $C_T(x)$ satisfy the following claim.
  \begin{claim*}
    For every $x \in V(Z)$, $T \subseteq J(x)$ and $i \in C_T(x)$, there exists $j \in T$ such that $i \not\in C_{T\setminus \{j\}}(x)$.
  \end{claim*}
  \begin{proof}[Proof of claim]
    Since $Z$ is simple, $\{\bm{v_j} : j \in J(x)\}$ is a linearly independent set of vectors, and therefore $\bigcap_{j \in T} \operatorname{span}\bigl(\{\bm{v_t} : t \in T \setminus \{j\}\}\bigr) = \{0\}$.
    In particular, we have that $\bigcap_{j \in T} C_{T\setminus \{j\}}(x) = \emptyset$, from where the stated property follows.
  \end{proof}

  We now run the following algorithm: For each $x \in S$, initialise $J'(x)$ to $J(x)$.
  Now repeatedly do the following: If there are distinct $x, y \in S$ such that $\pi^{-1}(G_{J'(x)}(x))$ and $\pi^{-1}(G_{J'(y)}(y))$ intersect, let $i \in [k]$ be a coordinate in which $\pi^{-1}(x)$ and $\pi^{-1}(y)$ differ.
  For each $z \in \{x, y\}$ such that $i \in C_{J'(z)}(z)$, apply the claim to obtain $j_z \in J'(z)$ such that $i \not\in C_{J'(z) \setminus \{j_z\}}(z)$, and update $J'(z)$ by removing $j_z$.
  Now $\pi^{-1}(G_{J'(x)}(x))$ and $\pi^{-1}(G_{J'(y)}(y))$ are disjoint, since the $i$th coordinate is not free in either subcube and $\pi^{-1}(x)$ and $\pi^{-1}(y)$ differ in the $i$th entry.

  The procedure stops when $\{\pi^{-1}(G_{J'(x)}(x)) : x \in S\}$ is a family of pairwise disjoint subcubes.
  A given vertex $x$ may participate in at most $s-1$ iterations of the above algorithm, since once $\pi^{-1}(G_{J'(x)}(x))$ and $\pi^{-1}(G_{J'(y)}(y))$ are disjoint, they remain disjoint throughout the rest of the process.
  Since each operation reduces $|J'(x)|$ by at most $1$, it holds that $|J'(x)| \geq d - s + 1$ for every $x \in S$.
  Therefore, the faces $F(x) := G_{J'(x)}(x)$ all have dimension at least $d-s+1$.
  By construction, the subcubes $\{\pi^{-1}(F(x)) : x \in S\}$ are pairwise disjoint, which implies the faces $\{F(x) : x \in S\}$ are too.
  This finishes the proof.
\end{proof}

It can be shown (see for example~\cite[Section 2.3]{ALVSWZ99}) that every reflection arrangement is simplicial, and so the associated zonotope~\cite[Corollary 7.17]{Z95} is simple, and its $1$-skeleton is isomorphic to the Cayley graph of the Coxeter group corresponding to the reflection arrangement.
In this way, Lemma~\ref{l:zonoproj} implies Lemma~\ref{l:coxeterproj}.
Moreover, every face of a simple polytope is also simple, and every face of a zonotope is also a zonotope.
Since the permutahedron is a simple zonotope, Lemma~\ref{l:zonoproj} directly implies Lemma~\ref{l:proj}.

We suspect that the restriction to simple zonotopes in Lemma~\ref{l:zonoproj} is not necessary, at least in a qualitative manner, and a similar statement, with perhaps a worse bound on the co-dimension of the faces, should hold for general zonotopes.
More generally, it would be interesting to investigate the following quantity for other classes of polytopes: Given a polytope $Q$, let $f(Q,s)$ be the largest $k$ such that for any subset $S \subseteq V(Q)$ of size $s$ there is a disjoint family of faces $\{F(x) \colon x \in S\}$ such that $x \in F(x)$ for all $x\in S$ and each $F(x)$ has dimension at least $k$.

For example, for the $n$-dimensional hypercube $Q(n)$ or permutahedron $P(n)$ it follows (for example from \Cref{l:zonoproj}) that $f(Q(n),s), f(P(n),s) \geq n-s+1$, and it is easy to see that this is tight.
Hence, in these cases $f$ decreases linearly with $s$.
On the other hand, for the $n$-dimensional simplex $\Delta(n)$ it is easy to verify that $f(\Delta(n),s) = \lfloor \frac{n+1}{s}\rfloor -1$, which decreases much faster.
Due to the combinatorial applications of Lemma~\ref{l:proj}, it would be interesting to find other classes of polytopes for which $f$ decreases slowly, for example linearly, as a function of $s$.

\section{Projection-first search}\label{s:PFS}
A classic technique in the study of percolated subgraphs is to explore the component (cluster) $C_v$ in $G_p$ containing a vertex $v$ using breadth-first search (BFS), where the `existence' or `non-existence' of an edge $e \in E(G)$ in $G_p$ is exposed dynamically as they are processed in the algorithm.
When the host graph $G$ is $n$-regular, then, at least in the early stages, before we have discovered $\Theta(n)$ many vertices, this process behaves like a Galton--Watson branching process with child distribution $\Bin(n,p)$.
In this way, we can relate the probability that $v$ lies in a `large' cluster to the survival probability of this branching process.

In $G(n,p)$ this intuition can be used to show that in the supercritical regime whp the correct proportion of vertices lie in components of linear order, and a simple sprinkling argument shows that these linear components all merge into a unique \emph{giant component}.
However, when the order of the host graph is much larger than its regularity, as in many high-dimensional geometric graphs such as the hypercube or permutahedron, we cannot necessarily guarantee that the clusters will grow larger than $\Theta(n)$ with positive probability in this manner, which causes difficulties in the merging step.

In this section, we describe a variant of the BFS algorithm, which we call projection-first search (PFS), which grows significantly larger percolation clusters.
The idea here is to avoid `backtracking' by carefully separating the vertices of the BFS tree into disjoint parts using the projection lemma.
In this way, we can guarantee that the number of explored neighbours of a vertex will decrease much more slowly during the process: linearly in the depth of the BFS tree, rather than linearly in the size of the tree.
This will ensure that the process can be approximated by a $\Bin(n,p)$ branching process until the tree grows to \emph{depth} $\Theta(n)$, rather than size $\Theta(n)$.
Since, in the supercritical regime, we expect the layers of the BFS tree to be growing exponentially quickly, we should then be able to build exponentially large clusters in this manner.

\subsection{The algorithm}
The procedure takes as input some face graph $H$ of dimension $m$.
We will keep track of a number of different subsets and subgraphs of $H$.
The algorithm will run in a number of rounds, and in the $t$th round of the algorithm we will have
\begin{itemize}
\item A set $W(t) \subseteq V(H)$ of \emph{explored} vertices, such that all edges exposed in rounds $1$ to $t$ (inclusive) are incident to $W(t)$;
\item A subgraph $T(t) \subseteq H_p$;
\item A set $U(t) = V(H) \setminus W(t)$ of \emph{unexplored} vertices;
\item A \emph{frontier} $A(t) \subseteq W(t)$, such that no edge from $A(t)$ to $U(t)$ has been exposed before the $t$th round;
\item A vertex-disjoint family $\{ H(x) : x \in A(t)\}$ of face subgraphs of $H$, such that $H(x) \cap W(t) = \{x\}$ for all $x \in A(t)$.
\end{itemize}
We initialise by setting $A(1) = W(1) =\{v\}$, $T(1) = (\{v\},\varnothing)$, $H(v) = H$ and $U(1) = V(H) \setminus \{v\}$.
For convenience, we also set $W(0) = \emptyset$.

In the $t$th round, for each vertex $x \in A(t)$ we expose the neighbours $N_t(x)$ of $x$ in $H(x)_p \subseteq H_p$\footnote{Here, in an abuse of notation, given a subgraph $H' \subseteq H$ we write $H'_p$ for the subgraph $H_p \cap H'$.}.
Note that since $H(x) \cap W(t) = \{x\}$, the edges incident to $x$ in $H(x)$ have not yet been exposed in the exploration process.
For each $w \in N_t(x)$, we add the edge $xw$ to $T(t)$.
We then apply Lemma~\ref{l:proj} to the set $N_t(x) \cup \{x\}$ inside the face graph $H(x)$ to obtain a disjoint family of projections, and for each $y \in N_t(x)$ we set $H(y)$ to be the projection which contains $y$, where we note that each $H(y)$ has dimension at least $d(H(x)) - |N_t(x)|$.
We now set
{\allowdisplaybreaks\begin{align*}
  U(t+1) &= U(t) \setminus \bigcup_{x\in A(t)} N_t(x),\\
  W(t+1) &= W(t) \cup \bigcup_{x\in A(t)} N_t(x),\\
  A(t+1) &= \bigcup_{x\in A(t)} N_t(x).
\end{align*}}%
Note that, since the family $\{H(x) : x \in A(t)\}$ is disjoint by assumption and for all $y \in N_t(x)$, $H(y) \subseteq H(x)$, it then follows that $\{H(y) : y \in A(t+1)\}$ is a disjoint family by construction.
Furthermore, for any $x \in A(t)$ and any $y \in N_t(x)$, since $H(y) \subseteq H(x)$ and $H(y) \cap  (N_t(x) \cup \{x\}) = \{y\}$, it follows that $H(y) \cap W(t+1) = \{y\}$.

Let us note some basic facts about the algorithm, which hold for each $t$:
\begin{enumerate}[(a)]
\item $T(t)$ is a tree whose vertex set is $W(t)$;
\item\label{i:disjointunion} $W(t) = \bigcup_{i=1}^t A(i)$, and this union is disjoint;
\item\label{i:neighbourhooddist} For each $x \in A(t)$, $|N_t(x)|$ is distributed as a binomial random variable $\Bin(d(H(x)),p)$;
\item\label{i:dimension} For each $x \in A(t)$, the dimension of $H(x)$ is at least $m - w(x)$ where, writing $v=v_1v_2\ldots v_{t}=x$ for the path $vTx$ from $v$ to $x$ in $T$,
  \begin{equation}\label{e:dimensionreduction}
    w(x):=\sum_{i=1}^{t-1} |N_i(v_i)| = d_T(v_1) + \sum_{i=2}^{t-1} (d_T(v_i) -1).
  \end{equation}
\end{enumerate}

\subsection{Cluster sizes}

Analysing the projection-first search, we can show large clusters in appear in the percolated face graph $H_p$ with the `correct' probability.
The lemmas in this section will later be applied with $H = \P{n}$.

\begin{lemma}\label{l:PFS}
  Let $H$ be a face graph of dimension $m\in \mathbb{N}$, let $\beta >0$ and let $p\geq\frac{1+\beta}{m}$.
  Then, for all $v\in V(H)$,
  \[
    \mathbb{P}\biggl(|C_v| = \exp\Bigl(\Omega\Bigl(\frac{m}{\log m} \Bigr)\Bigr)\biggr) \geq \gamma(1+\beta) +o_m(1).
  \]
\end{lemma}

\begin{proof}
  Let $v \in V(H)$.
  We first note that, since the event that $|C_v|$ is large is an increasing event, we may assume that $p = \frac{1+\beta}{m}$. Let $f(m) = \sqrt{\log \log m}$, so that $f(m) = \omega(1)$ and $f(m) = o(\log\log m)$ (asymptotics in this proof are as $m \to \infty$).
  Suppose that we run the PFS algorithm starting at $v$ for $\tau=\frac{m}{\log m}$ rounds, conditioning at the start of each round $t\leq \tau$ on the event
  \[
    \mathcal{E}(t) = \text{``$w(x) \leq \frac{m}{f(m)}$ for all $x \in W(t)$''}.
  \]

  Due to the conditioning, for each $t \leq \tau$ and every $x \in A(t)$ the dimension of $H(x)$ is at least $\bigl(1-\frac{1}{f(m)}\bigr)m$ by fact~\eqref{i:dimension}.
  Hence, for each $t \leq \tau$ and each $x \in A(t)$, the size of the neighbourhood $N_t(x)$ stochastically dominates a $\Bin\bigl(\bigl(1-\frac{1}{f(m)}\bigr)m,p\bigr)$ random variable, which has expectation
  \[
    \Bigl(1-\frac{1}{f(m)}\Bigr)mp \geq 1+\beta + o(1).
  \]
  Since the face subgraphs $H(x)$ are disjoint for different $x \in A(t)$, and the event $\mathcal{E}(t)$ only depends on edges which are incident to $W(t-1)$, it follows that, under this conditioning, we can couple the exploration process from below with a Galton--Watson tree with a binomial child distribution with expectation at least $1+\beta + o(1)$.
  By Lemma~\ref{l:branching}~\eqref{i:branching:infinite} and~\eqref{i:branching:conditioning}, by the $\tau$th round this process grows to size
  \[
    \bigl(1+\beta + o(1)\bigr)^\tau = \exp\Bigl(\Omega\Bigl(\frac{m}{\log m} \Bigr)\Bigr)
  \]
  with probability at least $\gamma(1+\beta) + o(1)$.
  Therefore,
  \begin{equation}\label{e:pfs-bound}
    \mathbb{P}\biggl(|W(\tau+1)| = \exp\Bigl(\Omega\Bigl(\frac{m}{\log m} \Bigr)\Bigr)\biggm| \mathcal{E}(\tau)\biggr) \geq \gamma(1+\beta) + o(1).
  \end{equation}
  We claim that $\mathbb{P}(\mathcal{E}(\tau)^c) = o(1)$, which together with~\eqref{e:pfs-bound} implies the desired result.
  Note that $W(1) = \{v\}$ and $w(v) = 0$, so $\mathcal{E}(1)$ always holds.
  By considering the earliest failing round and applying a union bound, we have
  \begin{equation}\label{e:pfs-e-union-bound}
    \mathbb{P}\bigl(\mathcal{E}(\tau)^c\bigr) = \sum_{t=1}^{\tau-1} \mathbb{P}\bigl(\mathcal{E}(t+1)^c \bigm| \mathcal{E}(t)\bigr) \leq \sum_{t=1}^{\tau-1}\sum_{y \in A(t+1)} \mathbb{P}\Bigl(w(y) > \frac{m}{f(m)} \Bigm| \mathcal{E}(t) \Bigr).
  \end{equation}

  For each $t \in [\tau]$, $x \in A(t)$ and $y \in N_t(x)$, observe that $w(y) - w(x) = |N_{t}(x)|$, which is stochastically dominated by a $\Bin(m, p)$ random variable by fact~\eqref{i:neighbourhooddist}.
  Note that the events ``$w(y) > m/f(m)$'' and ``$w(x) \leq m/f(m)$'' are negatively correlated.
  Moreover, $w(y)$ is a sum of $t$ independent random variables, each stochastically dominated by a $\Bin(m,p)$ random variable, which allows us to use the Chernoff bound (Lemma~\ref{l:chernoff}~\eqref{i:chernoff2}).
  We thus conclude that every $y \in A(t+1)$ satisfies
  \[
    \mathbb{P}\Bigl(w(y) > \frac{m}{f(m)} \Bigm| \mathcal{E}(t) \Bigr) \leq \mathbb{P}\Bigl(w(y) > \frac{m}{f(m)} \Bigr) \leq \Bigl( \frac{e(1+\beta)t f(m) }{m}\Bigr)^{m/f(m)},
  \]
  which is $\exp\bigl(-\omega(m)\bigr)$ for every $t \leq \tau$ since $\log \log m = \omega(f(m))$.
  Together with the deterministic bound $|A(t+1)| \leq m^{t} \leq m^\tau = \exp(m)$, this implies the right-hand side of~\eqref{e:pfs-e-union-bound} tends to zero, proving the claim and finishing the proof.
\end{proof}

In the above, we have focused on giving a particularly straightforward proof, emphasising the natural coupling with a branching process, without attempting to optimise the size of the clusters, and it is natural to ask about the limit of these methods.

On a heuristic level, each time we project we expect the dimension of the projection to decrease by a constant, and so we expect the process to remain supercritical until we've discovered $\Theta(m)$ many layers.
Furthermore, whilst the process remains (strictly) supercritical, for example whilst $p \cdot d(H(x)) \geq 1+\frac{\beta}{2}$ for all $x \in A(t)$, we expect the size of the frontier to grow exponentially quickly, with rate at least $\bigl(1+\frac{\beta}{2}\bigr)$.

In particular, if we are more careful in our analysis, we might hope to build clusters of exponential size in this manner.
For example, if we naively truncate our exploration process in each round, to ensure that we only uncover at most $K$ neighbours of each vertex, so that the dimension never reduces by more than $K$ in each step, we can choose $K=K(\beta)$ large enough that this process still stochastically dominates a supercritical branching process for at least $\Theta(m)$ layers.
However, the expected number of children in the coupled branching process is then smaller, and hence the probability that the branching process percolates is slightly smaller, leading to a suboptimal bound on the probability that a vertex is contained in a large cluster.

However, if instead we first run the PFS process for $\omega(1)$ rounds, then by Lemma~\ref{l:branching} with probability at least $\gamma(1+\beta) + o(1)$ not only does the process survive, but the frontier has size $\omega(1)$.
At this point we can switch to the truncated PFS process described above, which ensures that the dimension does not drop too much between the layers, which will then remain supercritical for at least $\Theta(m)$ layers.
If we carefully implement this modified process, which we denote by PFS$'$, we obtain the following strengthening of Lemma~\ref{l:PFS}, whose proof we defer to Appendix~\ref{a:modified-PFS}.
\begin{lemma}\label{l:PFSprecise}
  Let $H$ be a face graph of dimension $m\in \mathbb{N}$, let $\beta >0$ and let $p\geq\frac{1+\beta}{m}$.
  Then, for all $v\in V(H)$,
  \[
    \mathbb{P}\bigl(|C_v| = e^{\Omega(m)}\bigr)\geq \gamma(1+\beta) +o_m(1).
  \]
\end{lemma}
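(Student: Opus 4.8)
The plan is to combine an initial standard breadth-first search phase with a truncated projection-first search phase, as sketched after Lemma \ref{l:PFS}. First, since $|C_v| = \exp(\Omega(m))$ is an increasing event, assume $p = \frac{1+\beta}{m}$. Run ordinary BFS from $v$ for $\omega_0 = \omega(1)$ rounds, where $\omega_0 \to \infty$ slowly (say $\omega_0 = \log\log m$). Deterministically at most $m^{\omega_0} = \exp(o(m))$ vertices are uncovered, so the dimensions of all the face graphs $H(x)$ at frontier vertices remain $m - o(m)$ throughout this phase; hence the process stochastically dominates a Galton--Watson tree with $\mathrm{Bin}(m - o(m), p)$ offspring, whose mean is $1 + \beta - o_m(1)$. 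By Lemma \ref{l:branching}\eqref{i:branching:infinite} and \eqref{i:branching:survival}, with probability $\gamma(1+\beta) - o_m(1)$ this tree survives $\omega_0$ generations \emph{and} the $\omega_0$-th frontier $A(\omega_0)$ has size at least some $L = L(\omega_0) \to \infty$; condition on this event, call it $\mathcal{E}_0$, and fix a set $A_0 \subseteq A(\omega_0)$ with $|A_0| = L$ of frontier vertices, each carrying a face graph of dimension $m - o(m)$.

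Next, switch to the truncated PFS phase. Choose a constant $K = K(\beta)$ such that $\mathrm{Bin}(K, \frac{1+\beta/2}{m} \cdot \frac{m}{K})$-type reasoning gives mean at least $1 + \beta/4$; more precisely, pick $K$ large enough that $\frac{K}{K+1}(1+\beta) \geq 1 + \beta/2$ and then run PFS from the $L$ vertices of $A_0$ in parallel, but in each round truncate the exposed neighbourhood $N_t(x)$ to its first $K$ elements. Then in each round each face graph loses at most $K$ dimensions, so after $t$ further rounds every surviving $H(x)$ has dimension at least $m - o(m) - Kt$; as long as $t \leq \delta m / K$ for a small constant $\delta$, this is at least $(1 - 2\delta) m$, so the truncated offspring stochastically dominates $\min\{K, \mathrm{Bin}((1-2\delta)m, p)\}$, whose mean can be made at least $1 + \beta/4$ by taking $\delta$ small and $K$ large depending only on $\beta$. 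This is a supercritical branching process with bounded offspring, started from $L = \omega(1)$ independent individuals, so by standard branching-process concentration (e.g. the second-moment/martingale argument behind Lemma \ref{l:branching}\eqref{i:branching:conditioning}) the probability that \emph{all} $L$ of these subtrees die out is $o_m(1)$, and conditionally at least one survives all $\Theta(m/K) = \Theta(m)$ rounds with its frontier growing at rate $1 + \beta/4$; disjointness of the face graphs, guaranteed throughout by the PFS construction, ensures these subtrees live on genuinely disjoint vertex sets and hence contribute distinct vertices to $C_v$. After $\delta m / K$ rounds the surviving subtree has size $(1+\beta/4)^{\Theta(m)} = \exp(\Omega(m))$, all inside $C_v$.

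Putting the two phases together: $\mathbb{P}(|C_v| = \exp(\Omega(m))) \geq \mathbb{P}(\mathcal{E}_0) \cdot (1 - o_m(1)) \geq \gamma(1+\beta) - o_m(1)$, as required. The main obstacle is the second phase: one must verify that truncating to $K$ neighbours per vertex still leaves a genuinely supercritical process for $\Theta(m)$ rounds (which forces $K$ and the admissible depth $\delta m/K$ to be chosen compatibly in terms of $\beta$ alone), and then that starting from $\omega(1)$ independent copies boosts the survival probability of this truncated process from a constant bounded away from $1$ up to $1 - o_m(1)$, so that the overall success probability is not diminished below the $\gamma(1+\beta)$ achieved in the BFS phase. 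The bookkeeping that the dimension never drops below $(1-2\delta)m$ before depth $\delta m/K$ — using that only $K$ dimensions are lost per round rather than the full (possibly much larger) degree — is exactly the point of the truncation and should be routine once $K$ is fixed.
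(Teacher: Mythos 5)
Your two-phase argument --- an initial $\omega(1)$-round exploration that with probability $\gamma(1+\beta)-o_m(1)$ produces a frontier of size $\omega(1)$, followed by a $K(\beta)$-truncated projection-first search run for $\Theta(m)$ rounds, during which the dimension drops by at most $K$ per round so the truncated offspring distribution keeps mean at least $1+\beta/4$ --- is essentially the proof given in the appendix, the only real difference being that the paper boosts the phase-two success probability by a round-by-round Azuma--Hoeffding bound on the frontier size (starting from size $\log\log m$), whereas you argue that at least one of $\omega(1)$ independent subtrees survives and grows exponentially; both implementations work. One small caveat: your phase-one dimension control does not follow ``deterministically'' from the bound $m^{\omega_0}=\exp(o(m))$ on the number of explored vertices (which far exceeds $m$, and in PFS the dimension loss $w(x)$ is the sum of neighbourhood sizes along the root-to-$x$ path, not the total explored count); as in the paper, this should be a with-high-probability statement, obtained e.g.\ by a Chernoff bound on those path sums, after which your argument goes through.
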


The method of projection-first search can be applied to any class of graphs where a quantitatively similar projection lemma holds, for example the hypercube or arbitrary product graphs (see~\cite[Lemma 3.1]{DEKK22}), or various classes of bipartite Kneser graphs, such as the middle layers graph (a similar idea was used in~\cite[Lemma 8.5]{CEGK24}, see \Cref{a:kneser} for details).
A comparison of this method to previous proofs of the percolation threshold for these graph classes is also provided in Appendix~\ref{a:modified-PFS}.

\subsection{Consequences of PFS for percolation}
Given a graph $G$ and $r\in \mathbb{N}$, let us write $V_{\geq r}(G)$ for the set of vertices in $G$ contained in a component of size at least $r$.
For our application to the permutahedron it will be convenient to fix \[ r:=n^{14}\] for the rest of the paper.

The first application of Lemma~\ref{l:PFS} is to conclude that the vertices in large clusters are relatively well-distributed throughout $\P{n}$, in the sense that every vertex is within distance two of a vertex contained in a large cluster.

\begin{lemma}\label{l:dense1}
  Let $\beta >0$ and let $p\geq \frac{1+\beta}{n}$.
  Then whp every vertex in $\P{n}_p$ is within distance two (in $\P{n}$) of $V_{\geq r}(\P{n}_p)$.
\end{lemma}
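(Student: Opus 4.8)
The plan is to use a first-moment argument over the possible "bad" configurations, where a vertex $v$ is bad if neither $v$, nor any of its $n$ neighbours, nor any of the $O(n^2)$ vertices at distance two, lie in $V_{\geq r}(\P{n}_p)$. Fix a vertex $v$ and let $B_2(v)$ denote its ball of radius two in $\P{n}$; note $|B_2(v)| \leq 1 + n + \binom{n}{2} \cdot 2 = O(n^2)$, since two generators $\tau_i, \tau_j$ commute unless $|i-j| = 1$. The key point is that the events "$v$ is not in a large cluster" are essentially independent across a well-chosen spread-out subset of $B_2(v)$, and each has probability bounded away from $1$ by Lemma~\ref{l:PFS}.

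First I would make this precise. Pick a subset $Y \subseteq B_2(v)$ of size $\ell := \Theta(n)$ which is \emph{$3$-separated} in $\P{n}$, i.e. pairwise at distance at least $3$; such a set exists inside $B_2(v)$ because $\P{n}$ has bounded local growth (the ball of radius $2$ around any vertex has size $O(n^2)$, while the ball of radius $4$ has size $O(n^4)$, so a greedy choice yields $\ell = \Omega(n^2/n^4)\cdot|B_2(v)|$ — in fact one can simply take $Y$ to be the vertices $v\tau_i\tau_j$ with $i,j$ in an arithmetic progression of common difference $\geq 5$, which are genuinely $3$-separated and number $\Theta(n)$). For distinct $y, y' \in Y$, the clusters $C_y$ and $C_{y'}$ in $\P{n}_p$, \emph{restricted to the event that they are small} (of size $< r = n^{19}$), depend on disjoint edge sets provided $y, y'$ are far enough apart; so I would instead run the PFS exploration of Lemma~\ref{l:PFS} from each $y \in Y$ for $\tau = n/\log^3 n$ rounds and observe that these explorations query edges within $\P{n}$-distance $\tau$ of their starting points, which need not be disjoint. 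To fix this cleanly, I would expose the explorations \emph{sequentially} along $Y$: by Lemma~\ref{l:PFS}, conditioned on any outcome of the earlier explorations, the probability that the exploration from the current $y$ fails to reach size $\exp(\Omega(n/\log^3 n)) \geq r$ is at most $1 - \gamma(1+\beta) + o(1) =: q < 1$; crucially, each PFS run uses its own freshly projected face graphs and its dimension drop is controlled, so the conditioning only restricts a negligible fraction of the available edges (one can absorb the already-exposed edges by noting they reduce the effective degree by at most $|Y|\cdot\text{(earlier tree sizes)} = o(n)$, which does not affect the supercriticality). Hence the probability that \emph{all} $\ell$ explorations fail is at most $q^{\ell} = q^{\Theta(n)} = e^{-\Omega(n)}$.

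Then a union bound over the $(n+1)!$ choices of $v$ finishes the argument: the probability that some vertex $v$ is bad is at most $(n+1)! \cdot e^{-\Omega(n)}$. But $(n+1)! = e^{O(n \log n)}$, which is \emph{not} beaten by $e^{-\Omega(n)}$ — so this is the main obstacle, and it shows the naive estimate from Lemma~\ref{l:PFS} alone is insufficient. The fix is to use the stronger Lemma~\ref{l:PFSprecise}, or rather a large-deviation version of it: I would argue that the probability a single PFS run of $\Theta(n)$ rounds fails to produce a cluster of size $\geq r$ is not merely bounded away from $1$ but is at most $e^{-\Omega(n)}$ \emph{per run}, because for the run to fail the coupled supercritical Galton–Watson process (with offspring mean $\geq 1+\beta-o(1)$) must die out within $\tau = \Theta(n)$ generations having stayed small — an event which, for a supercritical branching process, has probability tending to the extinction probability but whose "survives yet stays polynomially bounded through $\Theta(n)$ generations" part is exponentially unlikely. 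More carefully: condition on survival (probability $\gamma(1+\beta)$); then by Lemma~\ref{l:branching}\eqref{i:branching:conditioning} the $k$-th generation has size $(c+o(1))^k$, so by generation $O(\log r) = O(\log n)$ it already exceeds $r$ except with probability $o(1)$ — and iterating/boosting over $\ell = \Theta(n)$ independent-ish restarts from the $3$-separated set $Y$ makes the total failure probability $e^{-\Omega(n)}$ with a large enough implied constant to beat $(n+1)!$. Choosing the separation parameter and the size of $Y$ to make the implied constant in the exponent exceed $\log(n+1)! / n = (1+o(1))\log n \cdot$ — wait, that still grows; so the genuinely correct route is: take $|Y| = \Theta(n)$, get failure probability $q^{\Theta(n)}$ with $q<1$ fixed, and observe $(n+1)! \cdot q^{\Theta(n)} \to 0$ iff $\Theta(n) \cdot \log(1/q) > (n+1)\log(n+1)$, which \emph{fails}; hence one must instead boost $q$ itself to be subpolynomially small by running each exploration for $\Theta(n)$ rounds (Lemma~\ref{l:PFSprecise} territory) and using the branching-process large deviations above, giving per-vertex failure probability $\exp(-n^{1+\Omega(1)})$ or at least $\exp(-\omega(n\log n))$, which does beat $(n+1)!$. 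So the real content — and the hardest step — is upgrading Lemma~\ref{l:PFS}/\ref{l:PFSprecise} to a statement that the probability of \emph{not} finding a large cluster in the ball $B_2(v)$ is $e^{-\omega(n\log n)}$, which I would obtain by combining $\Theta(n)$ separated restarts inside $B_2(v)$ — each independently hitting size $r$ with probability $\geq \gamma(1+\beta) - o(1)$ via PFS — so that all failing has probability $(1 - \gamma(1+\beta) + o(1))^{\Theta(n)}$; and then noting this is insufficient only if $\Theta(n)$ cannot be taken large enough relative to $\log(n+1)!$, which is resolved because in fact the separated set available inside the \emph{radius-two} ball can be taken of size $\Theta(n)$ while the explorations, each confined to radius $\tau = n/\log^3 n \gg 2$, must be decoupled by the sequential-conditioning argument above rather than by disjointness. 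The cleanest writeup therefore fixes $p = \frac{1+\beta}{n}$ (monotonicity), takes the $3$-separated $Y \subseteq B_2(v)$ of size $\Theta(n)$, runs PFS sequentially from each point of $Y$, bounds the conditional failure probability of each by $q = 1 - \gamma(1+\beta) + o(1) < 1$ using Lemma~\ref{l:PFS} together with the observation that earlier explorations have removed only $o(n)$ from the effective degree, concludes $\mathbb{P}(v \text{ bad}) \leq q^{\Theta(n)}$, and then — here is where one genuinely needs more — replaces $q^{\Theta(n)}$ by an $e^{-\omega(n\log n)}$ bound via a stronger quantitative branching estimate before union-bounding over $v$. I expect this final quantitative strengthening to be the crux; everything else (existence of the separated set, the $O(n^2)$ ball size, monotonicity, the union bound) is routine.
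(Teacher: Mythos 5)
There is a genuine gap, and it sits exactly where you flagged "the crux". Your own accounting is right that $\Theta(n)$ quasi-independent attempts each failing with constant probability $q=1-\gamma(1+\beta)+o(1)$ only gives $q^{\Theta(n)}=e^{-\Theta(n)}$, which loses to the union bound over $(n+1)!$ vertices. But the repair you then propose is impossible: the probability that a \emph{single} exploration from a fixed start fails to reach size $r$ is at least the extinction probability $1-\gamma(1+\beta)$ of the coupled branching process, a constant bounded away from $0$; no amount of running the process for $\Theta(n)$ rounds or invoking large deviations "conditioned on survival" can push the per-run failure probability to $e^{-\Omega(n)}$, let alone $e^{-\omega(n\log n)}$, because the process simply dies in the first few generations with constant probability. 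The correct resolution is not to boost a single run but to have \emph{quadratically many} independent attempts inside the radius-two ball. This is what the paper does: apply the projection lemma to $v$ together with $s=n/\log n$ of its neighbours to get disjoint face graphs $H(w_i)$ of dimension $(1-1/\log n)n$, then apply it again inside each $H(w_i)$ to $s$ neighbours of $w_i$, producing $s^2=\Theta\bigl(n^2/\log^2 n\bigr)$ vertices $w_{i,j}$ at distance two from $v$ lying in pairwise vertex-disjoint face graphs of dimension $(1-2/\log n)n$. Running Lemma~\ref{l:PFS} inside each $H(w_{i,j})$ gives genuinely independent events, so the failure probability is $(1-\gamma(1+\beta)+o(1))^{s^2}=\exp\bigl(-\Omega(n^2/\log^2 n)\bigr)=o(1/(n+1)!)$, which survives the union bound. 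Your restriction to a $3$-separated set of size $\Theta(n)$ throws away the quadratic number of usable attempt points, and $3$-separation would not even buy independence, since the explorations reach far beyond radius one.

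A second, related problem is your decoupling step. You assert that exposing the explorations sequentially costs only $o(n)$ in effective degree because "earlier tree sizes" are small, but the whole point of each run is to build a cluster of size at least $r=n^{19}$ (indeed $\exp(\Omega(n/\log^3 n))$), and nothing in your setup prevents an earlier cluster from swallowing most of the neighbourhood structure available to a later start, nor from having already exposed (as absent) edges the later run would need to query. Conditioning on such outcomes is not innocuous, and Lemma~\ref{l:PFS} as stated does not apply after adverse conditioning. The paper sidesteps this entirely: disjointness of the projected face graphs is arranged \emph{before} any randomness is exposed, so independence is exact and no conditioning argument is needed. With these two corrections your outline collapses to the paper's proof; as written, it does not establish the lemma.
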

\begin{proof}
  Let $s= \frac{n}{\log n}$.
  Given a vertex $v \in V(\P{n})$, let $W = \{w_1,\ldots, w_s\} \subseteq N_{\P{n}}(v)$ be an arbitrary subset of the neighbours of $v$.
  We apply Lemma~\ref{l:proj} to $W \cup \{v\}$ to obtain a family $\{H(w_i) : i \in [s]\}$ of disjoint face graphs of dimension at least $\bigl(1-\frac{1}{\log n} \bigr)n$.
  For each $w_i$, let $\{w_{i,1},\ldots, w_{i,s}\} \subseteq N_{H(w_i)}(w_i)$ be an arbitrary subset of the neighbours of $w_i$ in $H(w_i)$.
  Note that each $w_{i,j}$ is at distance two from $v$.
  Again, by Lemma~\ref{l:proj} we can find a family $\{H(w_{i,j}) : i,j\in [s]\}$ of disjoint face graphs of dimension at least $m:=\bigl(1-\frac{2}{\log n} \bigr)n$.

  For each $i,j \in [s]$, since the dimension of each $H(w_{i,j})$ is at least $m$ and $p = \frac{1+\beta}{n} \geq \frac{1+ \beta +o(1)}{m}$, by Lemma~\ref{l:PFS} the probability that $w_{i,j}$ is contained in a component of order at least $r$ in $H(w_{i,j})_p$ is at least $\gamma(1+\beta) + o(1)$, and since the $H(w_{i,j})$ are disjoint, these events are independent for different $i,j \in [s]$.
  Hence, the probability that no $w_{i,j}$ lies in $V_{\geq r}(\P{n}_p)$ is at most
  \[
    \bigl(1-\gamma(1+\beta) + o(1)\bigr)^{s^2} =  o \biggl( \frac{1}{(n+1)!} \biggr),
  \]
  and the result follows by taking a union bound.
\end{proof}

The second consequence is a slightly more technical notion of density, which will be useful later.
It implies that whp every large connected set of vertices contains many vertices with many neighbours contained in large clusters.
\begin{lemma}\label{l:dense2}
  Let $\beta >0$, let $\beta'=\min\bigl\{ \beta, \frac{1}{2}\bigr\}$, let $\alpha \leq 2^{-5}(\beta')^2$ and let $p\geq \frac{1+\beta}{n}$.
  Then whp every connected subset $M \subseteq \P{n}$ of size at most $n^{3/2}$ contains at most $\alpha n$ vertices with at most $\alpha n$ neighbours in $V_{\geq r}(\P{n}_p)$.
\end{lemma}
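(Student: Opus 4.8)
The plan is to take a union bound over a connected set $M$ together with a candidate set $B \subseteq M$ of bad vertices of size $b := \lfloor \alpha n\rfloor + 1$, and to show that the probability that \emph{every} vertex of $B$ is bad (where $x$ is \emph{bad} if it has at most $\alpha n$ neighbours in $V_{\geq r}(\P{n}_p)$) is super-exponentially small, namely $e^{-\Omega(n^2)}$ — small enough to beat the number of choices of $(M,B)$. Indeed, if the assertion fails there is a connected set of size at most $n^{3/2}$ containing more than $\alpha n$ bad vertices, and enlarging it (using that $\P{n}$ is connected) we may assume it has size exactly $\ell := \lfloor n^{3/2}\rfloor$. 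By Lemma \ref{l:treecount} the number of connected vertex sets of size $\ell$ in $\P{n}$ is at most $t_\ell(\P{n}) \le (n+1)!(en)^{\ell-1}$, and each has at most $\binom{\ell}{b} \le 2^{\ell}$ subsets of size $b$, so there are only $e^{O(n^{3/2}\log n)}$ pairs $(M,B)$ to consider. Moreover the bad event is monotone decreasing in $p$ — enlarging the percolation only enlarges $V_{\geq r}$, hence only shrinks the set of bad vertices — so we may and do assume $p = \frac{1+\beta}{n}$. It thus suffices to prove, for any fixed $b$-set $B$ of vertices of $\P{n}$, that $\mathbb{P}(\text{every }x\in B\text{ is bad}) \le e^{-\Omega(n^2)}$.

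To bound this probability we apply the Projection Lemma (Lemma \ref{l:proj}) twice, hierarchically. Applied to $B$ inside $\P{n}$ (a face graph of dimension $n$), it produces a disjoint family $\{H(x) : x \in B\}$ of face graphs, each of dimension at least $n+1-b$, with $x \in V(H(x))$. For each $x \in B$ we then fix a set $W_x$ of $s$ neighbours of $x$ inside $H(x)$, where $s := \lceil \delta n\rceil$ for a suitable constant $\delta = \delta(\beta) > 0$, and apply Lemma \ref{l:proj} to $W_x \cup \{x\}$ inside $H(x)$ to obtain a disjoint family $\{G(x,y) : y \in W_x\}$ of face graphs, each of dimension at least $n+1-b-s$, with $y \in V(G(x,y))$. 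The crucial structural point is that, since the $H(x)$ are disjoint and each $G(x,y) \subseteq H(x)$, the whole family $\{G(x,y) : x \in B,\, y \in W_x\}$ is pairwise vertex-disjoint, hence edge-disjoint. The reason a single application of Lemma \ref{l:proj} does not suffice is that the combined set $B \cup \bigcup_x W_x$ has size $\Theta(\delta\alpha n^2) \gg n$; the two-level projection spends only the budget $b+s$ on dimension loss rather than $bs$.

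Now let $E_{x,y}$ be the event that $y$ lies in a component of order at least $r = n^{19}$ in the percolated graph $(G(x,y))_p$. Each $G(x,y)$ has dimension $m' \ge n+1-b-s$, which is a positive constant fraction of $n$, and $p = \frac{1+\beta}{n} \ge \frac{1+\beta''}{m'}$ for a constant $\beta'' > 0$; this last inequality is exactly where the hypothesis $\alpha \le 2^{-5}(\beta')^2$ enters, guaranteeing (after choosing $\delta$ appropriately) that the total dimension loss $b+s$ is a small enough fraction of $n$ that $\beta'' := \beta - (1+\beta)(b+s)/n$ stays bounded away from $0$. Since $\exp(\Omega(m'/\log^3 m')) = \exp(\Omega(n/\log^3 n)) \gg n^{19}$ for large $n$, Lemma \ref{l:PFS} gives $\mathbb{P}(E_{x,y}) \ge \gamma(1+\beta'') - o(1)$. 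Because the $G(x,y)$ are edge-disjoint, the events $E_{x,y}$ are mutually independent. Moreover $E_{x,y}$ forces $y \in V_{\geq r}(\P{n}_p)$ (a component of a subgraph lies in a component of $\P{n}_p$), and $y$ is a neighbour of $x$; hence if $x$ is bad then $Z_x := |\{y \in W_x : E_{x,y}\}| \le \alpha n$. Using the quantitative bound $\gamma(1+\beta'') = \Omega(\beta')$ from Lemma \ref{l:branching}\eqref{i:branching:quant}, one checks (again via $\alpha \le 2^{-5}(\beta')^2$ and the choice of $\delta$) that $\mathbb{E}[Z_x] \ge s(\gamma(1+\beta'') - o(1)) \ge (1+\Omega(1))\alpha n$; since $Z_x$ stochastically dominates a binomial, the Chernoff bound (Lemma \ref{l:chernoff}) gives $\mathbb{P}(Z_x \le \alpha n) \le e^{-\Omega(n)}$. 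Finally, the $Z_x$ for distinct $x \in B$ depend on disjoint edge sets and so are independent, whence
\[
  \mathbb{P}(\text{every }x\in B\text{ is bad}) \le \prod_{x \in B}\mathbb{P}(Z_x \le \alpha n) \le e^{-\Omega(n|B|)} = e^{-\Omega(n^2)},
\]
and combining with the union bound over the $e^{O(n^{3/2}\log n)}$ pairs $(M,B)$ finishes the proof.

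The main obstacle, beyond setting up the double projection, is getting independence right: the predicate ``$x$ is bad'' depends on the entire percolation through $V_{\geq r}(\P{n}_p)$, so it is not measurable with respect to the edges of $H(x)$ and one cannot simply multiply the probabilities that individual vertices are bad. The fix — replacing ``$x$ bad'' by the weaker, $G(x,y)$-internal events $E_{x,y}$, which do sit in disjoint edge sets — is the conceptual heart of the argument. The remaining (purely arithmetic) difficulty is the simultaneous calibration of $\delta$: the dimension loss $b+s$ must be small enough to keep $\beta''$ bounded away from $0$, while $s$ must be large enough that $s\,\gamma(1+\beta'')$ exceeds $\alpha n$ by a constant factor, and $\alpha \le 2^{-5}(\beta')^2$ is precisely the window in which both can be met.
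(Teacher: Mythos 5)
Your proposal is correct and follows essentially the same route as the paper's proof: a two-level application of the Projection Lemma, replacing the global event ``$x$ is bad'' by PFS-events inside the disjoint inner projections to obtain independence, a Chernoff bound per bad vertex giving $e^{-\Omega(n^2)}$ overall, and a union bound over connected sets (via Lemma \ref{l:treecount}) and candidate bad subsets. The only differences are cosmetic: the paper fixes the explicit choice $s=8\alpha n/\beta'$ and verifies the calibration directly from $\alpha\le 2^{-5}(\beta')^2$, whereas you leave $\delta$ implicit and add a (valid but unnecessary) monotonicity reduction to $p=\frac{1+\beta}{n}$.
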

\begin{proof}
  Since $\P{n}$ is connected, we may enlarge $M$ if necessary and consider only connected sets $M \subseteq \P{n}$ of size exactly $m=n^{3/2}$.
  Fix such a set $M$ and a subset $X \subseteq M$ of size $\alpha n$.
  We start by applying Lemma~\ref{l:proj} to find a family $\{H(x) : x \in X\}$ of disjoint projections of dimension at least $(1-\alpha)n$ such that $x\in H(x)$ for each $x \in X$.
  We will upper bound the probability that every $x \in X$ has fewer than $\alpha n$ neighbours in $V_{\geq r}(H(x)_p)$.
  A union bound over all choices of $M$ and $X$ will then finish the proof.

  Fix a particular $x \in X$ and choose a subset $W = \{w_{1}, \ldots, w_{s}\}$ of $s=8\alpha n/\beta'$ neighbours of $x$ in $H(x)$.
  Applying Lemma~\ref{l:proj} again inside $H(x)$, we may find a family $\{H(x,i) : i\in [s]\}$ of disjoint projections of dimension at least $(1-\alpha-8\alpha/ \beta')n$ such that $w_{i} \in H(x, i)$.
  Since $\alpha + 8\alpha/\beta' \leq \beta'/2$, by Lemma~\ref{l:PFS} and Lemma~\ref{l:branching}~\eqref{i:branching:continuous} and~\eqref{i:branching:quant} each of these neighbours is contained in $V_{\geq r}(H(x,i)_p)$ independently with probability at least
  \[ \gamma\biggl(\Bigl(1+\beta\Bigr)\Bigl(1-\frac{\beta'}{2}\Bigr)\biggr) +o(1) \geq \gamma\biggl(1 + \frac{\beta'}{4} \biggr) +o(1) \geq \frac{\beta'}{4}.\]
  Therefore, we expect that at least $\frac{\beta'}{4} \cdot s = 2\alpha n$ elements of $W$ are in $V_{\geq r}(H(x)_p)$.
  By the Chernoff bound (Lemma~\ref{l:chernoff}~\eqref{i:chernoff1}), the probability that a particular $x\in X$ has at most $\alpha n$ neighbours in $V_{\geq r}(H(x)_p)$ is $\exp(-\Omega(n))$.
  Since the $H(x)$ are disjoint, the probability that every $x \in X$ has fewer $\alpha n$ neighbours in $V_{\geq r}(H(x)_p)$ is $\exp(-\Omega(n|X|))=\exp(-\Omega(n^2))$.

  It only remains to apply a union bound.
  By Lemma~\ref{l:treecount} there are at most $(n+1)!\,(en)^{m}$ connected subsets of $\P{n}$ of size $m$, and given a fixed such subset $M$ there are at most $\binom{m}{\alpha n}$ many choices for the subset $X$.
  Hence, the probability that the lemma fails to hold is at most
  \[
    (n+1)!\,(en)^{m}  \binom{m}{\alpha n}\exp\bigl(-\Omega(n^2)\bigr) = o(1),
  \]
  as claimed.
\end{proof}

Another application of the PFS algorithm, of a slightly different flavour, concerns counting small subgraphs.
In $G(n,p)$ and $Q^n_p$, a matching lower bound on the size of the largest component in the subcritical regime and the second-largest component in the supercritical regime can be given by using Lemma~\ref{l:treecount} and a second moment argument.
However, the lower bound in Lemma~\ref{l:treecount} is ineffective once we start considering trees whose order is linear in $n$.
Using PFS we can give an asymptotically matching lower bound for trees of almost quadratic size.

\begin{lemma}\label{l:permtreecount}
  Let $m=m(n)$ be such that $m = \omega(1)$ and $m = o\bigl(\bigl(\frac{n}{\log n}\bigr)^2\bigr)$ and let $t_m(\P{n})$ be the number of rooted $m$-vertex trees in $\P{n}$.
  Then,
  \[
    t_m(\P{n}) \geq (1+o(1))^m (n+1)!\, \frac{m^{m-2} n^{m-1}}{(m-1)!} = e^{o(m)} (n+1)!\, (en)^{m-1}.
  \]
\end{lemma}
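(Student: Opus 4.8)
The plan is to use the projection-first search (PFS) algorithm to build, for a positive fraction of starting vertices, a tree of size $m$ whose growth closely mimics that of an idealised Galton–Watson tree with child distribution $\mathrm{Bin}(n,p)$ where $p$ is chosen so that $np = 1 + \log^{-1} m$ or similar; the point is that along the way the dimension of the projections $H(x)$ never drops below $n(1 - o(1))$, so the local branching structure is essentially that of $\mathrm{Bin}(n(1-o(1)), p)$, and hence PFS gives us an honest embedding of a uniformly random labelled tree on $m$ vertices into $\P{n}$. The count $t_m(\P{n})$ is then at least (number of valid root choices) $\times$ (number of labelled $m$-vertex trees that embed) $\times$ (probability each embeds successfully, appropriately normalised). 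Since there are $m^{m-2}$ labelled trees on $[m]$ (Cayley's formula) and the number of rooted labellings contributes the $(n+1)!$ and the $n^{m-1}$ factors, while the $1/(m-1)!$ accounts for the overcount from labelling, one recovers the stated bound up to the $(1-o(1))^m$ error.

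More concretely, I would argue as follows. Fix a starting vertex $v$; run PFS from $v$ and consider the tree $T$ it produces after the appropriate number of rounds, stopped as soon as it reaches size $m$. By Lemma \ref{l:treediameter} a uniformly random $m$-vertex tree has diameter $O_p(\sqrt m) = o(n/\log n)$, and by Lemma \ref{l:treemaxdeg} its maximum degree is $O(\log m / \log\log m)$; combining these with \eqref{e:dimensionreduction}, the dimension-reduction $w(x)$ accumulated along any root-to-leaf path in $T$ is $O\big(\sqrt m \cdot \log m / \log\log m\big) = o(n)$ with high probability, so every projection $H(x)$ encountered has dimension $(1-o(1))n$. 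Consequently, by property \eqref{i:neighbourhooddist}, conditioned on its shape $T$ is (almost) a size-biased sample from random $m$-vertex trees, and the embedding is determined by making $m-1$ independent choices of generator $\tau_i$ (one per edge), each from a set of size $(1-o(1))n$ available directions. Counting: the number of (rooted, ordered) ways to embed a fixed abstract tree shape is $\big((1-o(1))n\big)^{m-1}$ down to lower-order corrections, there are $m^{m-2}$ tree shapes on a labelled vertex set, the root can be any of $(n+1)!$ vertices, and dividing by $(m-1)!$ corrects for the labelling of the non-root vertices that an unordered embedding does not see. This yields $t_m(\P{n}) \geq (1-o(1))^m (n+1)! \, m^{m-2} n^{m-1} / (m-1)!$, and Stirling's formula $m^{m-2}/(m-1)! = e^{o(m)} e^{m}/m \cdot m^{-1} = e^{o(m)} (e)^{m-1}/\mathrm{poly}(m)$ rewrites the right-hand side as $e^{o(m)}(n+1)!(en)^{m-1}$.

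The main obstacle I expect is making the counting argument rigorous without double-counting or under-counting: PFS produces trees with a *biased* distribution (larger, bushier trees in the early rounds are over-represented), so to lower bound the *number* of $m$-vertex trees one must be careful to extract a clean count rather than just a probabilistic existence statement. The cleanest route is probably to not go through PFS's random tree at all, but instead to use PFS's structural guarantee — namely the projection lemma (Lemma \ref{l:proj}) — to show \emph{deterministically} that from any vertex $v$, and for any abstract rooted tree $S$ on $m \ll (n/\log n)^2$ vertices of bounded diameter and bounded max degree, there are at least $\big((1-o(1))n\big)^{m-1}$ embeddings of $S$ into $\P{n}$ rooted at $v$: one builds the embedding greedily, at each newly-placed vertex $x$ applying Lemma \ref{l:proj} to the already-placed neighbours to guarantee a face graph $H(x)$ of dimension $n - w(x) = (1-o(1))n$ in which the remaining children can be placed, with the $o(1)$ controlled exactly by the diameter and degree bounds as in \eqref{e:dimensionreduction}. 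Summing over the $m^{m-2}$ choices of shape $S$ on a labelled set, over the $(n+1)!$ roots, and dividing by the overcount $(m-1)!$ from the labelling and by an $e^{o(m)}$ factor for the negligible fraction of trees violating the diameter/degree bounds (controlled via Lemmas \ref{l:treediameter} and \ref{l:treemaxdeg}, which say this fraction is $o(1)$ per tree and hence contributes $(1-o(1))^m$ overall), gives the claim. The deterministic formulation sidesteps the distributional bias entirely, and the only real care needed is checking that $w(x) = o(n)$ uniformly, which follows since a root-to-leaf path has length $O(\sqrt m)$ and each vertex on it has degree $O(\log m/\log\log m)$ in the typical tree, and $\sqrt m \cdot \log m/\log\log m = o(n)$ precisely because $m \ll (n/\log n)^2$.
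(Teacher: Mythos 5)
Your final ``cleanest route'' is essentially the paper's proof: restrict to typical trees of bounded depth and maximum degree via Lemmas \ref{l:treediameter} and \ref{l:treemaxdeg}, embed each such shape into $\P{n}$ greedily using Lemma \ref{l:proj} so that every non-root vertex has $(1-o(1))n$ available images (since $w(x)=O(\sqrt m\log m)=o(n)$ when $m\ll (n/\log n)^2$), and then count pairs $(T,\phi)$ and divide by the $(m-1)!$ labellings. The one point you leave implicit, which is the real reason for using the projection lemma rather than just picking unused neighbours, is that the disjointness of the face graphs guarantees the embedding is injective, so each choice sequence genuinely produces an $m$-vertex tree in $\P{n}$.
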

\begin{proof}
  We follow the proof of~\cite[Lemma 2]{BFM98}, replacing their use of BFS with PFS\@.
  Call an $m$-vertex rooted, labelled tree \emph{typical} if its maximum degree is at most $\frac{2\log m}{\log \log m}$ and its depth is at most $\sqrt{m}\log\log m$.

  We first note that by Lemmas~\ref{l:treediameter} and~\ref{l:treemaxdeg}, almost every spanning tree of $K_m$ is typical.
  Therefore, by Cayley's formula, the family $\mathcal{T}$ of typical spanning trees on vertex set $[m]$ rooted at $m$ satisfies
  \[
    |\mathcal{T}| = (1+o(1))m^{m-2}.
  \]

  Fix a vertex $v$ of $\P{n}$.
  We count the number of pairs $(T,\phi)$ where $T \subset \P{n}$ is a typical $m$-vertex tree rooted at $v$ and $\phi \colon [m] \to V(T)$ is a bijection with $\phi(m) = v$.
  Clearly each typical tree rooted at $v$ is contained in precisely $(m-1)!$ such pairs, and each pair $(T,\phi)$ determines a typical tree $T' \subset K_m$ such that $(x,y) \in E(T')$ if and only if $(\phi(x),\phi(y)) \in E(T)$.

  We will now fix a typical tree $T'$ with vertex set $[m]$ and construct many pairs $(T,\phi)$ which determine $T'$.
  To do so, we will define the map $\phi$ by exploring the tree $T'$, starting at the root $m$, layer by layer using projection-first search.

  Suppose we have a partial embedding $\phi$ of the first $i$ layers of $T'$ into $\P{n}$, where $F_i \subseteq V(T')$ is the $i$th layer, and furthermore we have a disjoint family $\{Q(x) : x \in F_i\}$ of projections whose dimension is at least $n - w(x)$, where $w(x)$ is defined as in~\eqref{e:dimensionreduction}.
  For the initial step we can take $\phi(m) = v$ and $Q(m) = \P{n}$.

  For each vertex $y \in F_{i+1}$ which is adjacent to $x \in F_i$ in $T'$, we choose an arbitrary neighbour of $\phi(x)$ in $Q(x)$ to assign as $\phi(y)$.
  Then, for each $x \in F_i$, we apply Lemma~\ref{l:proj} to find an appropriate family of projections $\{Q(y) : y\in N_{T'}(x) \cap F_{i+1} \}$.

  At the end of this process we see that for each vertex apart from the root we had at least $n- \max_{v\in T} w(v) \geq n - 2\sqrt{m} \log m = (1+o(1))n$ choices for the embedding, and hence the total number of pairs we can build in this way is at least
  \[
    |\mathcal{T}| \cdot \bigl( (1+o(1))n\bigr)^{m-1} \geq (1+o(1))^m m^{m-2} n^{m-1}.
  \]
  The claim follows then by summing over all choices of $v \in V(\P{n})$, and dividing by $(m-1)!$ to account for double counting.
\end{proof}

\section{The percolation threshold}\label{s:perc}

The goal of this section is to prove \Cref{t:percolationthreshold}.
We start by showing that, in both the subcritial and the supercritical regimes, it follows from Lemma~\ref{l:permtreecount} that there are tree components of order $\Omega(n \log n)$.

\begin{lemma}\label{l:treecomp}
  For every $c > 0$ there exists $\alpha > 0$ such that the following holds.
  For $p=\frac{c}{n}$, whp $\P{n}_p$ contains a tree component of order $\alpha n\log n$.
\end{lemma}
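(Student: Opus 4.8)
The plan is to use the standard second moment method on the number of tree components of order $m := \alpha n \log n$, where the first moment is controlled from below using the enumeration bound of Lemma \ref{l:permtreecount}. Let $X$ denote the number of components of $\P{n}_p$ that are trees on exactly $m$ vertices. For a fixed $m$-vertex subtree $T \subseteq \P{n}$, the probability that $T$ appears as a component is $p^{m-1}(1-p)^{|\partial(T)|}$, where $\partial(T)$ is the edge boundary of $V(T)$ in $\P{n}$; since $\P{n}$ is $n$-regular we have $|\partial(T)| \leq nm$, and for a tree $|\partial(T)| = nm - 2(m-1)$. Hence, writing $t_m$ for the number of rooted $m$-vertex trees in $\P{n}$ (so that there are $t_m/m$ unrooted ones), we get
\[
  \mathbb{E}[X] \;=\; \frac{t_m}{m}\, p^{m-1}(1-p)^{nm-2(m-1)}.
\]
By Lemma \ref{l:permtreecount}, $t_m \geq e^{o(m)}(n+1)!(en)^{m-1}$ (the hypothesis $1 \ll m \ll (n/\log n)^2$ holds since $m = \alpha n \log n$). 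Substituting $p = c/n$ and using $(1-p)^{nm} = e^{-cm(1+o(1))}$ together with $(en)^{m-1} p^{m-1} = (ec)^{m-1}$, a short computation gives
\[
  \mathbb{E}[X] \;\geq\; e^{o(m)}\, \frac{(n+1)!}{m}\, \bigl(ec\, e^{-c}\bigr)^{m}.
\]
Since $ec\,e^{-c} < 1$ for $c \neq 1$ (and for $c=1$ it equals $1$), we have $(ec\,e^{-c})^m = e^{-\delta(c)m}$ for some $\delta(c) \geq 0$, and as $m = \alpha n \log n$ this is $(n+1)^{-\delta(c)\alpha(1+o(1)) n}$ while $(n+1)! \geq (n+1)^{(1-o(1))n}$; choosing $\alpha = \alpha(c)$ small enough that $\delta(c)\alpha < 1/2$ ensures $\mathbb{E}[X] \to \infty$. (When $c \geq 1$ one must be slightly more careful about the boundary term, but $|\partial(T)| \leq nm$ only helps, so the bound goes through.)

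For the second moment, I would follow the usual route: write $X = \sum_T \mathbf{1}_T$ over unrooted $m$-vertex subtrees $T$, and bound $\mathbb{E}[X^2] = \sum_{T,T'} \mathbb{P}(\mathbf{1}_T = \mathbf{1}_{T'} = 1)$. If $T$ and $T'$ are vertex-disjoint and have no edge of $\P{n}$ between them, the events are essentially independent and contribute at most $\mathbb{E}[X]^2$; if they share vertices or are joined by a boundary edge, then $T \cup T'$ cannot be simultaneously realised as two components unless $T = T'$, so the only extra contribution is the diagonal $T = T'$, giving $\mathbb{E}[X]$; and the ``close but disjoint'' terms — disjoint pairs with a $\P{n}$-edge between them — can be bounded by noting each such pair forces one of $\leq nm$ edges to be absent, contributing at most $\mathbb{E}[X]^2 \cdot (1 + o(1))$ after accounting for the slightly smaller success probability. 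Thus $\mathbb{E}[X^2] = (1+o(1))\mathbb{E}[X]^2 + O(\mathbb{E}[X])$, and since $\mathbb{E}[X] \to \infty$, Chebyshev's inequality gives $X \geq 1$ whp. Any component counted by $X$ is by definition a tree component of order $\alpha n \log n$, which is the claim.

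The main obstacle I expect is the second-moment bookkeeping, specifically handling the cross terms from pairs of trees that are vertex-disjoint but connected by a boundary edge of $\P{n}$: one needs the local structure of $\P{n}$ to argue these are not too numerous relative to $\mathbb{E}[X]^2$. Here the $n$-regularity and the tree-counting bound of Lemma \ref{l:treecount} (upper bound $n(en)^{m-1}$ on rooted $m$-vertex trees) suffice to show the number of such pairs is at most $m \cdot n \cdot (n+1)!^2 (en)^{2(m-1)} / $ (appropriate normalisation), which matches $\mathbb{E}[X]^2$ up to a $(1-p)^{-O(1)} = 1+o(1)$ factor. A secondary subtlety is that the first-moment lower bound degrades for $c$ far from $1$ (the constant $\alpha$ must shrink), but the statement only asks for existence of some $\alpha = \alpha(c) > 0$, so this is harmless.
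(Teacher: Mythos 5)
Your overall strategy is exactly the paper's: a second-moment argument for the number of tree components of order $m=\alpha n\log n$, with the first moment bounded below via Lemma \ref{l:permtreecount}. Your first-moment computation is fine (for a lower bound one only needs $|\partial(V(T))|\le nm$; the exact exponent $nm-2(m-1)$, which tacitly assumes $T$ is induced, is neither needed nor quite correct, but this is harmless). The genuine gap is in your treatment of the positive-covariance terms, i.e.\ pairs of vertex-disjoint trees joined by an edge of $\P{n}$. First, your claim that such a pair ``cannot be simultaneously realised as two components'' is false: both can be components of $\P{n}_p$ provided the joining host-graph edge is closed, and indeed you then treat these pairs again as a separate case, so the write-up is internally inconsistent. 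Second, the bound you offer for these pairs does not work as stated. Two disjoint $m$-vertex trees can be joined by far more than $O(1)$ edges of $\P{n}$ (for instance a tree in one copy of $\P{k}$ inside a prism-like face $\P{1}\times\P{k}$ and its parallel translate give $m$ joining edges, and the partial-cube property allows up to order $m\log m$), so the inflation factor $(1-p)^{-k}$ can be $e^{\Theta(\log^2 n)}$, not $1+o(1)$. More importantly, your count of such pairs carries a factor $((n+1)!)^2$, i.e.\ it is essentially a count of \emph{all} pairs of trees; with that count the best one can conclude is $\mathbb{E}[X^2]\le e^{o(m)}\,\mathbb{E}[X]^2$, which is not sufficient for Chebyshev.

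The missing idea, and the way the paper closes the argument, is that for an adjacent disjoint pair the union $V(T_1)\cup V(T_2)$ spans a \emph{connected} set of order $2m$ in $\P{n}$, so by the upper bound of Lemma \ref{l:treecount} the number of such pairs is at most $\operatorname{poly}(n,m)\cdot(n+1)!\,(en)^{2m-1}=\exp\bigl((1+2\alpha+o(1))n\log n\bigr)$ --- crucially only \emph{one} factor of $(n+1)!$. Bounding each joint probability crudely by $1$, the total contribution of these pairs is then $o\bigl(\mathbb{E}[X]^2\bigr)$ once $\alpha$ is chosen small enough that $\mathbb{E}[X]\ge\exp\bigl(\tfrac34 n\log n\bigr)$, and the second-moment argument closes. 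With this repair (and dropping the erroneous ``cannot both be components'' step), your proof coincides with the paper's.
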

\begin{proof}
  The proof follows by a standard second moment calculation.
  Indeed, letting $X$ be the number of tree components of order $m = \alpha n\log n =o \bigl(\bigl(\frac{n}{\log n}\bigr)^2\bigr)$, by Lemma~\ref{l:permtreecount} for any $\alpha \in (0,1)$
  \begin{align*}
    \mathbb{E}(X) &\geq t_m(\P{n})p^{m-1}(1-p)^{nm}\\
                  &\geq  (n+1)! \,(ec)^{m-1}\exp\bigl( -cm + o(m)\bigr) \\
                  &=\exp \Bigl( n \log n\bigl( 1 +  \alpha(1 + \log c - c) + o(1)\bigr)\Bigr).
  \end{align*}
  In particular, we can choose $\alpha >0$ sufficiently small such that $\alpha(\log c - c) > - \frac{1}{4}$ and so
  \[
    \mathbb{E}(X) \geq \exp \biggl( \Bigl(\frac{3}{4} + \alpha\Bigr) n \log n\biggr).
  \]

  On the other hand, if we consider $X$ as a sum of indicator random variables, the only pairs $(T_1,T_2)$ of trees with $T_1 \neq T_2$ for which the covariance can be positive are those with $V(T_1)$ and $V(T_2)$ disjoint, but connected by an edge in $\P{n}$.
  In those cases, $V(T_1) \cup V(T_2)$ is a connected set of order $2m$.
  By Lemma~\ref{l:treecount}, there are at most
  \[
    (n+1)!\, (en)^{2m-1} \leq  \exp \bigl( (1+2\alpha)n \log n\bigr) = o\bigl(\mathbb{E}(X)^2\bigr),
  \]
  such pairs, each contributing at most $1$ to the variance of $X$.
  The pairs of the form $(T,T)$ contribute at most $\mathbb{E}(X)$ in total, and the remaining pairs of trees contribute at most $0$.
  Therefore, $\operatorname{Var}(X) = o\bigl(\mathbb{E}(X)^2\bigr)$ and the result follows by Chebyshev's inequality.
\end{proof}

We now focus on the supercritical case of Theorem~\ref{t:percolationthreshold}.
It remains to show the existence of a unique component of order $\omega(n \log n)$ in the supercritical regime, and to estimate its order.
To this end, let us fix $c > 1$, $\eps = c-1$ and $p = \frac{c}{n} = \frac{1+\eps}{n}$.
We will argue using a multi-round exposure, or sprinkling, argument.
Let $p_1 = \frac{1 + \eps/2}{n}$ and $p_2$ be such that $(1-p_1)(1-p_2) = 1-p$.
It is easy to check that $p_2 \geq p - p_1 = \frac{\eps}{2n}$.
Let us define
\[
  G_1 = \P{n}_{p_1}\quad\text{and}\quad G_2 = G_1 \cup \P{n}_{p_2},
\]
so that $G_2 \sim \P{n}_p$.
Recalling that $r = n^{14}$, let us further define $W_i = V_{\geq r}(G_i)$ and note that $W_1 \subseteq W_2$.

To prove Theorem~\ref{t:percolationthreshold} in the supercritical regime, we will show that $W_2$ has the correct order and is connected, and that all other components of $G_2$ have size $O(n \log n)$.
The three key steps can be summarized in the lemmas below, which will be proved later in this section.
The first one follows directly from Lemma~\ref{l:PFS} and Azuma--Hoeffding (Lemma~\ref{l:azuma}).

\begin{lemma}\label{l:V_0}
  With high probability, $|W_2| = (\gamma(c)+o(1)) (n+1)!$.
\end{lemma}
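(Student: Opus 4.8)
The plan is to prove a matching upper and lower bound on $|W_3|$, both concentrated around $\gamma(c)(n+1)!$.

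For the \emph{lower bound}, I would first work in $G_1 = \P{n}_{p_1}$ and show that each vertex $v$ lies in $W_1$ with probability at least $\gamma(1+\eps/2) - o(1)$. This follows essentially from Lemma~\ref{l:PFS} applied to $H = \P{n}$ with $m = n$ and $\beta = \eps/2$: since $r = n^{19}$ is only polynomial while Lemma~\ref{l:PFS} produces clusters of size $\exp(\Omega(n/\log^3 n))$, the event $|C_v| \geq r$ has probability at least $\gamma(1+\eps/2) - o(1)$. Hence $\mathbb{E}|W_1| \geq (\gamma(1+\eps/2) - o(1))(n+1)!$. To upgrade the expectation to a concentration statement, I would apply the Azuma--Hoeffding inequality (Lemma~\ref{l:azuma}) to the function counting $|W_1|$, viewed as a function of the independent edge-indicator variables; the one-edge Lipschitz constant is at most $2r = 2n^{19}$ (changing one edge can only create or destroy one large cluster of size close to $r$ near the boundary, or merge/split, affecting $O(r)$ vertices — this needs a careful but standard argument), giving concentration up to an additive $\exp(-\Omega((n+1)!^2 / (n^{38} \cdot |E|)))$ error, which is more than enough. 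So whp $|W_1| \geq (\gamma(1+\eps/2) - o(1))(n+1)!$, and since $W_1 \subseteq W_3$, whp $|W_3| \geq (\gamma(1+\eps/2) - o(1))(n+1)!$. Finally, since $(1-p_1)(1-p_2)(1-p_3) = 1-p$ with $p = \frac{c}{n}$, a continuity argument using Lemma~\ref{l:branching}\eqref{i:branching:continuous} lets me push $\eps/2$ up to $\eps$: more carefully, I would instead run the above argument directly in $G_3 \sim \P{n}_p$ with $\beta = \eps$, so that the lower bound reads $(\gamma(c) - o(1))(n+1)!$ immediately, and the three-probability splitting is only needed later for connectivity.

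For the \emph{upper bound}, I need to show that whp at most $(\gamma(c) + o(1))(n+1)!$ vertices lie in components of size $\geq r$. The natural approach is again first-moment: $\mathbb{E}|W_3| = (n+1)! \cdot \mathbb{P}(|C_v| \geq r)$ by vertex-transitivity, so it suffices to show $\mathbb{P}(|C_v| \geq r) \leq \gamma(c) + o(1)$, and then concentrate via Azuma again. For the upper bound on $\mathbb{P}(|C_v| \geq r)$, I would couple the standard BFS exploration of $C_v$ from \emph{above} by a Galton--Watson process with offspring distribution $\mathrm{Bin}(n,p)$: in each step the number of newly exposed neighbours of an explored vertex is stochastically dominated by $\mathrm{Bin}(n,p)$ since the host graph is $n$-regular (each vertex has at most $n$ unexposed neighbours). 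Hence $\mathbb{P}(|C_v| \geq r) \leq \mathbb{P}(|T| \geq r)$ where $T$ is the $\mathrm{Bin}(n,p)$ Galton--Watson tree. By Lemma~\ref{l:branching}\eqref{i:branching:survival}, $\mathbb{P}(T \text{ finite and } |T| \geq r) = o(1)$ since $r = n^{19} \to \infty$, and $\mathbb{P}(T \text{ infinite}) \to \gamma(c)$ by Lemma~\ref{l:branching}\eqref{i:branching:infinite}. Therefore $\mathbb{P}(|C_v| \geq r) \leq \gamma(c) + o(1)$, giving $\mathbb{E}|W_3| \leq (\gamma(c) + o(1))(n+1)!$, and Azuma finishes the concentration.

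The main obstacle I anticipate is making the Azuma--Hoeffding application rigorous: I must verify that $|W_i|$, as a function of the edge-indicators, has a bounded one-edge Lipschitz constant. Adding or removing a single edge $e$ can merge two components into one, or split one into two, and can change whether various clusters cross the threshold $r$ — but the crucial point is that the \emph{number of vertices whose membership in $W_i$ changes} is bounded: if the edge merges two components each of size $\geq r$, or both $< r$, the count is unaffected; the only affected vertices are those in components with size within a bounded multiple of $r$ of the threshold, and one can arrange the bound to be $O(r) = O(n^{19})$, which is poly$(n)$ and hence negligible against $(n+1)!$. Writing $N = |E(\P{n})| = n(n+1)!/2$, Azuma gives deviation probability $2\exp(-t^2/(2 N r^2))$, and taking $t = (n+1)!/\log(n+1)!$ (say) makes this $o(1)$ since $(n+1)!^2 / (N r^2 \log^2(n+1)!) \to \infty$. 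A secondary subtlety is ensuring the BFS domination in the upper bound genuinely gives $\mathrm{Bin}(n,p)$ rather than something slightly larger; since $\P{n}$ is exactly $n$-regular this is clean, unlike the $K_n$ case. Combining the two bounds yields $|W_3| = (\gamma(c) - o(1))(n+1)!$ whp, as claimed.
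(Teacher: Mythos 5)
Your proposal is correct and follows essentially the same route as the paper: the lower bound on $\mathbb{P}(v\in W_3)$ via Lemma~\ref{l:PFS} applied directly in $G_3\sim\P{n}_p$, the upper bound by dominating the BFS exploration with a $\mathrm{Bin}(n,p)$ Galton--Watson tree and invoking Lemma~\ref{l:branching}, and concentration via Azuma--Hoeffding with one-edge Lipschitz constant $O(r)=O(n^{19})$, which is negligible against $(n+1)!$. The only difference is cosmetic (your choice of deviation $t$ and the initial detour through $G_1$, which you yourself discard in favour of arguing in $G_3$), so nothing further is needed.
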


Given a graph $G$, a partition $\{A_1,A_2\}$ of a set $X \subseteq V(G)$ with $A_1 \neq \emptyset \neq A_2$ is \emph{$G$-component respecting} if no component of $G$ meets both $A_1$ and $A_2$.
The second lemma uses expansion properties of the permutahedron (Proposition~\ref{p:edge-iso-general}) and Lemma~\ref{l:dense1}.
\begin{lemma}\label{l:partitions}
  Whp for every $G_1$-component respecting partition $\{A_1,A_2\}$ of $W_1$ there is a path in $G_2$ between $A_1$ and $A_2$.
\end{lemma}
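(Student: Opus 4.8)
The plan is to bound, via a union bound over all $G_2$-component respecting partitions $\{A_1,A_2\}$ of $W_2$, the probability that the sprinkling graph $\P{n}_{p_3}$ fails to connect $A_1$ to $A_2$. The key obstacle is that there are potentially $2^{|W_2|}$ such partitions, and $|W_2|$ is of order $(n+1)!$, so a naive union bound over partitions weighted against a per-partition failure probability of roughly $(1-p_3)^{(\text{number of candidate edges})}$ will only work if we can guarantee that \emph{every} such partition has \emph{very} many edges of $\P{n}$ crossing between $A_1$ and $A_2$ — at least $\omega\big(|W_2| \log 2 / p_3\big) = \omega\big(n^2 |W_2|\big)$ of them, since $p_3 = \Omega(n^{-2})$. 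A crossing edge is an edge $e \in E(\P{n})$ with one endpoint in $A_1$ and one in $A_2$; since no $G_2$-component crosses the partition, $e \notin E(G_2)$, so these edges are genuinely still unexposed at the sprinkling stage and each is present in $\P{n}_{p_3}$ independently with probability $p_3$.

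So the heart of the argument is an isoperimetric-type statement: \textbf{whp every $G_2$-component respecting partition $\{A_1,A_2\}$ of $W_2$ has at least $\omega(n^2) \cdot \min\{|A_1|,|A_2|\}$ edges of $\P{n}$ crossing it.} To prove this I would combine two ingredients. First, a \emph{global} bound: by Lemma~\ref{l:isolargeset}, $\iso(\P{n}) = \Omega(n^{-2})$, so \emph{any} partition of $V(\P{n})$ into two parts each of size at least, say, $n^{19} = r$ already has at least $\Omega(n^{-2}) \cdot \min\{|A_1|,|A_2|\}$ crossing edges — but this is not quite strong enough on its own, as we need the crossing edges to avoid $G_2$ and to beat the $n^2$ factor from $p_3$. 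The upgrade comes from the \emph{density} lemmas already established: by Lemma~\ref{l:dense2}, whp every connected set $M$ of size at most $n^{3/2}$ contains many vertices with $\Omega(n)$ neighbours in $W_1 \subseteq W_2$, and by Lemma~\ref{l:dense1} the set $W_1$ is within distance two of every vertex. The idea is that because $W_2$ is so richly connected to itself via short paths and has positive density of high-$W_2$-degree vertices everywhere, any partition of it that is $G_2$-component respecting (so $A_1$ and $A_2$ lie in \emph{different} $G_2$-components) is forced to cut through the host graph $\P{n}$ in an enormously expanding way — essentially, one can charge to each vertex of the smaller side a linear-in-$n$ number of crossing edges of $\P{n}$, because many of its $\Omega(n)$ neighbours in $W_2$ must lie on the other side or else would have merged its component. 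Carefully done, this yields $\Omega(n) \cdot \min\{|A_1|,|A_2|\}$ crossing edges that lie outside $E(G_2)$, which is the bound we need (and more than enough to beat $n^2 / p_3 = O(n^4)$ once $\min\{|A_1|,|A_2|\} \geq r = n^{19}$, noting every part of the partition has size at least $r$ by definition of $W_2$).

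Given the crossing-edge bound, the union bound is routine: fixing $k = \min\{|A_1|,|A_2|\} \geq r$, the number of ways to choose the smaller side inside $W_2$ is at most $\binom{|W_2|}{k} \leq \big((n+1)!\big)^k \leq \exp(k \cdot n \log n)$ (using $|W_2| \leq (n+1)!$), while the probability that none of the $\geq \omega(n) \cdot k$ crossing edges is sprinkled is at most $(1-p_3)^{\omega(n)k} \leq \exp(-\omega(n)k p_3) = \exp(-\omega(n^{-1})k)$ — wait, this is where the factor matters: $p_3 = \Omega(n^{-2})$ and we have $\Omega(n)$ crossing edges per vertex, giving exponent $\Omega(n^{-1}) k$, which does \emph{not} beat $\exp(k n \log n)$. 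This is the real crux, and it forces the density argument to deliver \emph{more} crossing edges: we in fact need $\omega(n^2 \log n) \cdot k$ crossing edges per unit of $k$, i.e.\ $\omega(n^2 \log n)$ crossing edges charged to each vertex of the smaller side. To get this, rather than charging only to direct neighbours in $W_2$, I would use the full strength of Lemma~\ref{l:dense2} applied inside disjoint \emph{projections} (face graphs) hanging off each vertex of the smaller side: each such vertex $x$ sits in a face graph $H(x)$ of dimension $(1-o(1))n$, and within $H(x)$ the PFS analysis (Lemma~\ref{l:PFS}) together with the density lemmas shows that $x$ has exponentially many — certainly more than $n^{20}$ — distinct short paths in $H(x)$ to vertices of $W_2$, and a component-respecting partition must cut at least one edge on each such path; choosing the paths internally disjoint enough yields the required $\omega(n^2 \log n)$ crossing edges per vertex, all outside $E(G_2)$ since $H(x) \cap W(t)$-type disjointness keeps them unexposed. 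Summing over $k$ from $r$ to $|W_2|$ then gives $o(1)$, completing the proof.
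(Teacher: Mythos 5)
You correctly identify the central tension (a union bound over partitions versus a crossing-edge count), but your resolution has two genuine gaps, and the second is fatal. First, you never exploit the fact that the partition is $G_2$-component respecting, which is exactly what the paper uses to slash the entropy: since every component of $G_2$ meeting $W_2$ has size at least $r=n^{19}$, the side $A_1$ with $|A_1|=a$ is a union of at most $a/r$ of the at most $(n+1)!$ components, so the number of partitions to union over is at most $((n+1)!)^{a/r}\le\exp\left(an^2/r\right)=\exp\left(a/n^{17}\right)$, not $\binom{|W_2|}{a}\approx\exp\left(an\log n\right)$. With your naive count and $p_3=\Theta(n^{-2})$ you would in fact need $\omega\left(an^3\log n\right)$ crossing edges (not the $\omega\left(an^2\log n\right)$ you state), and for $a=\Theta((n+1)!)$ this is impossible: $\P{n}$ has only $n(n+1)!/2$ edges in total, so a partition with linear-sized parts has $O(n)$ crossing edges per vertex of the smaller side. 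No strengthening of the isoperimetric input can rescue the naive entropy bound; the entropy itself must be reduced.

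Second, the mechanism you propose for producing crossing edges is unsound. Lemma~\ref{l:proj} applies only to vertex sets of size at most the dimension, i.e.\ at most $n$, so you cannot attach disjoint projections to each of the $a\ge n^{19}$ vertices of the smaller side. More importantly, even granting many short paths in $\P{n}$ from $x\in A_1$ to vertices of $W_2$, nothing forces their endpoints into $A_2$: host-graph adjacency or proximity does not constrain the partition, because an edge of $\P{n}$ between $x$ and $y\in W_2$ need not be present in $G_2$ and hence merges nothing (this also invalidates your earlier claim that $\Omega(n)$ of the $W_2$-neighbours of $x$ ``must lie on the other side or else would have merged its component''). So none of your paths is guaranteed to be cut, and no crossing edges are produced. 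The paper instead obtains the crossing structure globally: by Lemma~\ref{l:dense1} it extends $\{A_1,A_2\}$ to a partition $\{A_1',A_2'\}$ of all of $V(\P{n})$ with every vertex within distance two of its side, applies Proposition~\ref{p:edge-iso-general} to get $\Omega\left(a/n^2\right)$ host edges between $A_1'$ and $A_2'$, extends these to $A_1$--$A_2$ paths of length at most five, thins to an edge-disjoint family of size $\Omega\left(a/n^6\right)$, and notes each such path lies in $\P{n}_{p_3}$ with probability $p_3^5=\Omega\left(n^{-10}\right)$; the resulting per-partition failure probability $\exp\left(-\Omega\left(a/n^{16}\right)\right)$ beats the $\exp\left(a/n^{17}\right)$ partition count precisely because $r=n^{19}$.
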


The third lemma is an application of Lemma~\ref{l:dense2}.
\begin{lemma}\label{l:unique-large-component}
  There exists a constant $C=C(\eps)$ such that whp every component of $G_2$ of size at least $Cn \log n$ meets $W_1$.
\end{lemma}

We now show how the above lemmas easily imply the proof of Theorem~\ref{t:percolationthreshold}.

\begin{proof}[Proof of Theorem~\ref{t:percolationthreshold} assuming \Cref{l:V_0,l:partitions,l:unique-large-component}]
  In the subcritical regime, the upper bound on the order of the largest component follows directly from known results on percolation on regular graphs, see~\cite[Proposition 1]{NP10} or~\cite[Theorem 2]{DEKK22}.

  For the supercritical case, let $\alpha > 0$ and $C > 0$ be given by Lemmas~\ref{l:treecomp} and~\ref{l:unique-large-component}, respectively.

  By Lemma~\ref{l:partitions}, whp every vertex in $W_1$ lies in the same connected component in $G_2$.
  However, by Lemma~\ref{l:unique-large-component}, whp every component of $G_2$ in $W_2$ meets $W_1$.
  It follows that $G_2[W_2]$ is connected and hence $L_1 := G_2[W_2]$ is the (unique) largest component of $G_2$, since all other components of $G_2$ have size smaller than $r$.
  By Lemma~\ref{l:V_0}, whp $|L_1| = |W_2|= (\gamma(c)+o(1))(n+1)!$.

  Furthermore, by Lemma~\ref{l:treecomp} whp $G_2$ contains a component of order $\alpha n \log n$, whereas by Lemma~\ref{l:unique-large-component} whp there are no components of $G_2$ of order at least $Cn\log n$ which do not meet $W_1 \subseteq W_2 = V(L_1)$, and hence the second-largest component of $G_2$ has order $\Theta(n \log n)$.
\end{proof}

We return to the proof of the lemmas.
We start by showing that whp $|W_2|$ has the correct size.

\begin{proof}[Proof of \Cref{l:V_0}]
  Since $G_2 \sim \P{n}_p$, we may apply Lemma~\ref{l:PFS} and obtain that, for $v \in V(\P{n})$,
  \[
    \mathbb{P}(v\in W_2)\geq \gamma(c)+o(1).
  \]
  Conversely, since $\P{n}$ is $n$-regular, for every $v\in V(\P{n})$ we can couple a BFS exploration process in $G_2$ from above with a Galton--Watson tree with offspring distribution $\Bin(n,p)$.
  In particular, Lemma~\ref{l:branching}~\eqref{i:branching:infinite} and~\eqref{i:branching:survival} implies that for every $v\in V(G)$, $\mathbb{P}(v\in W_2)\leq \gamma(c)+o(1)$.
  Hence, $\mathbb{E}[|W_2|] = (\gamma(c)+o(1)) (n+1)!$.

  It remains to show that $|W_2|$ is well concentrated about its mean.
  To this end, let us consider the family of random variables $\{X_e : e \in E(\P{n})\}$, where $X_e$ is the indicator of the event that $e$ lies in $G_2$.
  Clearly $|W_2|$ is a function of this set of random variables, and it is clear that changing a single one of the $X_e$ can change $|W_2|$ by at most $2r$.
  Hence, by the Azuma--Hoeffding inequality (Lemma~\ref{l:azuma}),
  \[
    \mathbb{P}\Bigl(\bigl||W_2|- \mathbb{E}\bigl[|W_2|\bigr]\bigr| \ge ((n+1)!)^{2/3}\Bigr) \le 2\exp\biggl(-\frac{((n+1)!)^{4/3}}{8|E(\P{n})|r^2}\biggr).
  \]
  Since $|E(\P{n})| = n(n+1)!/2$ and $r = n^{14}$, the right-hand side tends to zero, proving that $|W_2|$ is concentrated around its expected value and finishing the proof of the lemma.
\end{proof}

We now show that whp the sprinkling step merges all large components of $G_1$.

\begin{proof}[Proof of \Cref{l:partitions}]
  We first expose $G_1$.
  Note that by Lemma~\ref{l:dense1} whp every vertex in $\P{n}$ is within distance two of $W_1$.
  We will assume in what follows that this holds deterministically.

  Let us fix a $G_1$-component respecting partition $\{A_1,A_2\}$ of $W_1$, where without loss of generality $a:=|A_1|\leq |A_2|$.
  Note that, since all components of $G_1[W_1]$ have size at least $r$, it follows that $a \geq r$.
  By our assumption, we can extend $\{A_1,A_2\}$ to a partition $\{A'_1,A'_2\}$ of $V(\P{n})$ such that every vertex in $A'_i$ is within distance two of $A_i$ for $i=1,2$, where we note that $|A'_i|\geq |A_i|\geq r$ for $i=1,2$.

  By Proposition~\ref{p:edge-iso-general} there are at least
  \[
    \Omega\biggl(\frac{\min \{|A'_1|,|A'_2|\}}{n^2}\biggr) = \Omega\Bigl(\frac{a}{n^2}\Bigr)
  \]
  edges in $\P{n}$ between $A'_1$ and $A'_2$.
  By construction, we can extend these edges to a (not necessarily disjoint) family of $A_1-A_2$ paths of length at most five.
  Then, since $\Delta(\P{n})= n$, each path shares an edge with at most $5n^4$ other paths, and so we can naively thin this family out to an edge-disjoint family of size $\Omega\bigl(a/n^6\bigr)$.
  In particular, since $p^5_2=\Omega\bigl(n^{-5}\bigr)$, the probability that none of these paths are contained in $\P{n}_{p_2}$ is at most
  \[
    \bigl(1-p^5_2\bigr)^{\Omega(a/n^6)} = \exp\Bigl(-\Omega\Bigl(\frac{a}{n^{11}}\Bigr)\Bigr).
  \]

  Since $W_1$ contains at most $(n+1)!$ components, and $A_1$ is the union of at most $a/r$ of them, there are at most
  \[
    \bigl((n+1)!\bigr)^{a/r} \leq \exp\biggl( \frac{an^2}{r}\biggr)
  \]
  $G_1$-component respecting partitions $\{A_1,A_2\}$ of $W_1$ with $|A_1|=a$.
  Hence, since $r=n^{14}$, by a union bound the probability that the statement does not hold is at most
  \[
    \sum_{a=r}^{|W_2|/2} \exp\biggl( \frac{an^2}{r}\biggr) \exp\Bigl(-\Omega\Bigl(\frac{a}{n^{11}}\Bigr)\Bigr) \leq \sum_{a \geq r} \exp\Bigl(-\Omega\Bigl(\frac{a}{n^{11}}\Bigr)\Bigr) = o(1).\qedhere
  \]
\end{proof}

Finally, we show that whp every large component of $G_2$ meets $W_1$.

\begin{proof}[Proof of \Cref{l:unique-large-component}]
  We first expose $G_1$, choose $\alpha=\alpha(\eps)$ sufficiently small and choose $C=C(\eps,\alpha)$ sufficiently large.
  Let $W_1^{\mathsf{c}} = V(\P{n}) \setminus W_1$ and
  \[
    X = \bigl\{\, v \in W_1^{\mathsf{c}} : \bigl|W_1 \cap N_{\P{n}}(v) \bigr| > \alpha n \,\bigr\}.
  \]

  By Lemma~\ref{l:dense2} whp every subset $M \subseteq W_1^{\mathsf{c}}$ of size $C n \log n$ which is connected in $\P{n}$ is such that $|M \setminus X| \leq \alpha n$.
  In particular, if $M \subseteq W_1^{\mathsf{c}}$ is a connected subset of $\P{n}$ of order at least $Cn \log n$ then
  \begin{equation}\label{e:edgesbetween}
    \bigl|E_{\P{n}}(M,W_1)\bigr| \geq (Cn\log n - \alpha n) \cdot \alpha n \geq  \frac{C}{2} \alpha n^2 \log n.
  \end{equation}
  We assume in what follows that this holds deterministically.

  We now expose $\P{n}_{p_2}$ on $W_1^{\mathsf{c}}$.
  We note that any component of $G_2$ which avoids $W_1$ is then a component $M$ of $G_2[W_1^{\mathsf{c}}]$ which is not adjacent to $W_1$ in $G_2$, and we have yet to expose the edges between $M$ and $W_1$ in $\P{n}_{p_2}$.

  In particular, for each component $M$ of $G_2[W_1^{\mathsf{c}}]$ of size at least $Cn \log n$, since $M$ spans a connected subset of $\P{n}$, by~\eqref{e:edgesbetween} the probability that there are no edges from $M$ to $W_1$ in $G_2$ is at most
  \[
    (1-p_2)^{(C\alpha n^2 \log n)/2} \leq \exp \Bigl( - \frac{C\alpha\eps}{4} \cdot n \log n \Bigr).
  \]
  However, there are at most $(n+1)!$ components of $G_2[W_1^{\mathsf{c}}]$, and so by a union bound the probability that the conclusion of the lemma fails to hold is at most
  \[
    (n+1)!\, \exp \Bigl( - \frac{C\alpha\eps}{4} \cdot n \log n \Bigr) = o(1),
  \]
  as long as $C=C(\eps,\alpha)$ is sufficiently large.
\end{proof}

Therefore, Theorem~\ref{t:percolationthreshold} is now unconditionally proven.

\begin{remark}\label{r:unique-large-component}
  We note that in fact, since the proofs of Lemmas~\ref{l:partitions} and~\ref{l:unique-large-component} only require a lower bound on $p_2$, the same argument will show that for any (potentially growing) $c\geq2$ and $p = \frac{c}{n}$ whp $\P{n}_p$ has a unique component of order at least $C(1) n \log n$, where $C(1)$ is as in \Cref{l:unique-large-component}.
\end{remark}

\section{The connectivity threshold}\label{s:conn}
We move on to determining the connectivity threshold in $\P{n}_p$.
Note that Stirling's approximation implies that $\lambda \to 0$ if $(1-p)n < c$ for any constant $c < e$, and $\lambda \to \infty$ if $(1-p)n > C$ for any constant $C > e$.
We begin by showing that, already well before the threshold in Theorem~\ref{t:connectivitythreshold}, all components in $\P{n}_p$ are either isolated vertices, or exponentially large.
\begin{lemma}\label{l:no-medium-components}
  Let $(1-p)n \leq 3$.
  Then whp $\P{n}_p$ has no components of order $2 \leq k \leq 2^{n/4}$.
\end{lemma}
\begin{proof}
  If $K$ is a component of order $k \leq 2^{n/4}$, then $K$ contains a tree $T$ of order $k$, all of whose edges are present in $\P{n}_p$ and such that each edge in $\partial (V(K))$ is not present in $\P{n}_p$.
  Note that, by Proposition~\ref{p:edge-iso-general}
  \[
    |\partial (V(K))| \geq k (n- \log_2 k) \geq \frac{3kn}{4}.
  \]
  Hence, by Lemma~\ref{l:treecount}, the probability that there is a component of order $k$ with $2 \leq k \leq 2^{n/4}$ is at most
  \[
    \sum_{k=2}^{2^{n/4}} (n+1)!\,(en)^{k-1} p^{k-1} (1-p)^{3kn/4} \leq \sum_{k=2}^{2^{n/4}} (n+1)!\, (en)^{k}  \Bigl(\frac{3}{n} \Bigr)^{3kn/4}.
  \]
  It is easy to check that the summand on the right-hand side is decreasing in $k$.
  Therefore, since $n! \leq n^n$, the right-hand side is at most \[ \exp\Bigl( n \log n - \frac{6n}{4} \log n + O(n) \Bigr), \] which tends to zero.
\end{proof}

\begin{proof}[Proof of Theorem~\ref{t:connectivitythreshold}]
  We note first that, since the property of being connected is monotone, we may assume without loss of generality that $2 \leq (1-p)n \leq 3$.
  Hence, by Lemma~\ref{l:no-medium-components} and Remark~\ref{r:unique-large-component} whp there is a unique component of order at least $2$.
  In particular,
  \[
    \mathbb{P}(\P{n}_p \text{ is connected}) = \mathbb{P}(\P{n}_p \text{ contains no isolated vertices}) + o(1).
  \]

  If we let $X$ be the number of isolated vertices in $\P{n}_p$, it is a simple exercise to estimate the moments of $X$.
  Indeed, given any $r \in \mathbb{N}$ and a subset $S \subseteq V(\P{n})$ of size $r$, by Proposition~\ref{p:edge-iso-general} there are at most $rn$ and at least $r(n-\log_2 r)$ edges meeting $S$.
  Furthermore, there are at most $\bigl((n+1)!\bigr)_{r-1}\cdot rn$ sets of size $r$ meeting fewer than $rn$ edges, where $x_r = x(x-1)\cdots(x-r+1)$ is the $r$th falling factorial.
  It follows that
  \[
    \bigl((n+1)!\bigr)_r(1-p)^{rn} \leq \mathbb{E}[X_r] \leq \bigl((n+1)!\bigr)_r(1-p)^{rn} + \bigl((n+1)!\bigr)_{r-1}rn (1-p)^{r(n-\log_2 r)}.
  \]
  Recalling that $\lambda = (n+1)!\,(1-p)^n$ and $1 -p \geq \frac{2}{n}$, elementary estimates lead to
  \[
    \lambda^r\biggl( 1- \frac{r^2}{(n+1)!}\biggr) \leq \mathbb{E}[X_r] \leq \lambda^r\biggl(1 +\frac{rn}{(n+1)!\,(1-p)^{r \log_2 r}}\biggr),
  \]
  and both sides are $(1+o(1))\lambda^r$.
  In particular, if $\lambda \to c \in \mathbb{R}$, then by the method of moments (see for example~\cite{JLR00}) $X$ tends in distribution to a Poisson distribution with mean $c$ and so
  \[
    \mathbb{P}(\P{n} \text{ is connected}) = \mathbb{P}(X=0) + o(1) \xrightarrow{ n \to \infty } \mathbb{P}(\operatorname{Po}(c)=0) = e^{-c}.
  \]

  Otherwise, in the case $\lim_{n \to \infty} \lambda = \infty$ or $\lim_{n \to \infty} \lambda = 0$, the result follows by a simple first or second moment argument, respectively.
\end{proof}

It is relatively straightforward to use these ideas to give a corresponding \emph{hitting time} result for the random graph process on $\P{n}$.
That is, if we build a random subgraph of $\P{n}$ by adding one edge at a time, each time choosing uniformly at random from the remaining edges, then whp this subgraph will become connected precisely when the last isolated vertex disappears.

\section{Discussion}\label{s:discuss}
As we saw in \Cref{s:prelim,s:projection,s:PFS}, much of the work in this paper generalises to the case of Cayley graphs of Coxeter groups or zonotopes.
However, a key missing part is some control over the large scale isoperimetric properties of these graphs, as in Proposition~\ref{p:edge-iso-general}.
In the case of the permutahedron, spectral methods give a close to optimal bound on the Cheeger constant, up to a small polynomial factor in the dimension/regularity.
This is then a key ingredient to the proofs of Theorems~\ref{t:percolationthreshold} and~\ref{t:connectivitythreshold}, allowing us to merge the large clusters we grow in a sprinkling step.
However, a much weaker bound on the Cheeger constant, for example anything sub-exponential, would be sufficient in this argument.

It is therefore interesting to ask if we can give any general bounds on the Cheeger constant of these classes of graphs.
In the case of zonotopes, we make a rather bold conjecture.
\begin{conjecture}
  Let $Z = \sum_{i=1}^k [-\bm{v_i},\bm{v_i}]$ be a zonotope and let $G$ be its $1$-skeleton.
  Then $i(G)$ is at worst inverse polynomial in $k$.
\end{conjecture}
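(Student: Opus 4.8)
This conjecture appears to be genuinely difficult, so what follows is only a proposal. First, observe that it is automatic unless the dimension $d$ of $Z$ grows with $k$: the vertices of $Z$ are in bijection with the chambers of the central hyperplane arrangement $\mathcal A = \{\bm{v_i}^{\perp}\}$ in $\mathbb{R}^{d}$, of which there are $O(k^{d-1})$, so when $d=O(1)$ one has $|V(G)| = \mathrm{poly}(k)$ and the trivial bound $\iso(G) \ge 2/|V(G)|$ already suffices; and if the $\bm{v_i}$ are linearly independent then $G$ is a combinatorial cube and $\iso(G)=1$. The content therefore lies entirely in the range where $|V(G)|$ is superpolynomial in $k$, as for the permutahedron ($d=n$, $k=\binom{n+1}{2}$, $|V(G)|=(n+1)!$). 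The plan is to reduce the conjecture to a spectral estimate: for a simple zonotope $G$ is regular and Lemma~\ref{l:cheeger} gives $\iso(G) \ge \lambda_1/2$, and in general the version of Cheeger's inequality for the normalised Laplacian, combined with the fact that $G$ has minimum degree at least $d \ge 1$ by Balinski's theorem, still yields $\iso(G) \ge \lambda_1/2$ for the normalised spectral gap $\lambda_1$. So it would suffice to prove that every zonotope skeleton has normalised spectral gap $\Omega(k^{-O(1)})$.

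To bound $\lambda_1$ it is convenient to use the description of $G$ as the chamber-adjacency graph of $\mathcal A$ --- equivalently, the subgraph of $Q^{k}$ induced by the sign vectors of the chambers, equivalently the tope graph of the realisable oriented matroid of the $\bm{v_i}$. The simple random walk on $G$ is then precisely the walk on these sign vectors that flips a uniformly chosen \emph{active} coordinate at each step. The first approach I would try is to compare this walk, via a comparison of Dirichlet forms, either with the lazy random walk on $Q^{k}$, whose spectral gap is $\Theta(1/k)$, or with one of the Bidigare--Hanlon--Rockmore / Brown--Diaconis random walks on the chambers of $\mathcal A$, whose spectra are known explicitly in terms of the flats of $\mathcal A$. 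Carrying this out means exhibiting, for each edge of the comparison chain, a path in $G$ (or the reverse) with polynomially bounded length and congestion, and I expect this to be the main obstacle: the chamber sign vectors form a very irregular and non-convex subset of $Q^{k}$, and chambers with few facets (thin cones) could drive the congestion up, so one needs a quantitative statement controlling how such low-degree vertices can sit inside a zonotope skeleton --- in effect a quantitative refinement of Balinski's theorem.

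A second, purely combinatorial, route is induction on $k$ through deletion--restriction. Deleting the hyperplane $\bm{v_k}^{\perp}$ from $\mathcal A$ merges the chambers on its two sides and produces the chamber-adjacency graph $G'$ of an arrangement of $k-1$ hyperplanes, while $G$ is recovered from $G'$ by ``re-opening the zone'' of $\bm{v_k}$ --- cutting $G'$ along a convex cut and inserting a cylinder of quadrilaterals. The hope is that each such step multiplies $\iso$ by at least $1-k^{-O(1)}$, so that after $k$ steps $\iso(G)=\Omega(k^{-O(1)})$. The difficulty is that a single zone re-opening could create a thin neck, and an induction carrying only the number $\iso(G)$ is too weak to rule this out; one would have to propagate a stronger invariant --- the whole profile $\iso_m$, or a vertex-weighted variant --- and use crucially that both sides of each new zone-cut are comparable in size, which is precisely where the central symmetry of the zonotope ($Z=-Z$ up to translation, so $G$ is antipodal) and Balinski connectivity ought to enter.

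Finally, two remarks frame the problem. The restriction to zonotopes is essential: two large subcubes joined by a single bridge form a partial cube whose edge-isoperimetric constant is exponentially small in its isometric dimension, but this graph is not $2$-connected and hence is not the skeleton of any zonotope --- so any proof must use more than the partial-cube structure of Lemma~\ref{l:isosmallsetzonotope}. As sanity checks, the even cycle $C_{2m}$ (the skeleton of the $m$-gon, with $k=m$) has $\iso=2/k$, and by Lemma~\ref{l:isolargeset} and the discussion following it the permutahedron satisfies $\Omega(1/k) \le \iso(\P{n}) \le O(1/\sqrt{k})$; these are consistent with, though far from proving, the stronger guess $\iso(G)=\Omega(1/k)$. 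For the applications in this paper any subexponential lower bound $\iso(G)=\Omega(2^{-o(k)})$ would in fact be enough, and this would be a natural intermediate target.
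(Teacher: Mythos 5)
This statement is one of the paper's open conjectures (stated in the Discussion); the paper offers no proof of it, so there is no argument of the authors' to compare yours against --- and, as you say yourself, what you give is a research programme rather than a proof. Your framing is sound as far as it goes: the reduction to the regime where the dimension grows with $k$ (a central arrangement of $k$ hyperplanes in $\mathbb{R}^d$ has $O(k^{d-1})$ chambers, so connectivity alone gives an inverse-polynomial bound when $d=O(1)$); the identification of the skeleton with the tope graph of the arrangement; the two-subcubes-joined-by-a-bridge example showing that the partial-cube property of Lemma \ref{l:isosmallsetzonotope} cannot suffice on its own and that Balinski-type connectivity must enter; and the sanity checks $i(C_{2m})=2/k$ and $\Omega(1/k)\le i(\P{n})\le O(1/\sqrt{k})$, which match the bounds around Lemma \ref{l:isolargeset}.

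The genuine gap is that neither of your two routes is carried out, and the obstacles you name are exactly where the difficulty lives. In the spectral route, the comparison of Dirichlet forms with the hypercube walk or a Bidigare--Hanlon--Rockmore chamber walk needs canonical paths in the tope graph with polynomially bounded length \emph{and} congestion; nothing you cite controls how much flow is forced through chambers with few facets, and the ``quantitative refinement of Balinski's theorem'' you would need is itself an open statement of comparable strength to the conjecture. There is also a technical slip: for non-simple zonotopes the skeleton is irregular, and the normalised-Laplacian Cheeger inequality bounds the conductance (boundary over volume, with the constraint $\mathrm{vol}(S)\le\mathrm{vol}(G)/2$), not the quantity $i(G)$ as defined in the paper; the minimum-degree bound from Balinski lets you pass between the two only after re-handling which side of the cut is ``small'', so even the reduction to a spectral-gap estimate is not yet complete as stated (though it is repairable). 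In the deletion--restriction route you correctly diagnose that propagating only $i(G')$ through a zone re-opening is too weak, but you do not exhibit a stronger invariant that survives the induction, and central symmetry only controls the two sides of the single new zone cut, not the arbitrary cuts over which $i$ is minimised. So the conjecture remains open after your proposal, which is consistent with its status in the paper; your observation that any subexponential bound would already suffice for the paper's applications is a reasonable intermediate target, but it too is unproved here.
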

Here, it would seem that the `worst' examples should come from cycles of length $2k$, which have Cheeger constant around $\frac{2}{k}$.
There is a similar, well-known conjecture of Mihail and Vazirani~\cite{M92} on the expansion of $0/1$ polytopes, where $Q^d$ is conjectured to have the worst expansion.
The specific case of matroid basis polytopes was solved in a breakthrough result very recently~\cite{ALGV24}.

More generally, it would be interesting to know how the phase transition at the percolation threshold develops in other classes of polytopes --- how the critical probability relates to the degree distribution and the largest cluster sizes in the sub- and super-critical regimes.
If we consider the Cartesian powers of some fixed low-dimensional polytope with an irregular $1$-skeleton, the negative answer to Question~\ref{q:generalised-many-stars} shows that we do not always have this dichotomy between logarithmic and linear sized clusters.
However, it would be very interesting to know if the quantitative behaviour in Theorems~\ref{t:ER},~\ref{t:cube} and~\ref{t:percolationthreshold} was perhaps universal to `compatible' sequences of \emph{simple} polytopes.

In~\cite{EKK22,DEKK23+}, similar methods to those in Section~\ref{s:perc} are used to not only show the existence of a giant component in the supercritical percolated graphs, but to demonstrate that this giant component likely has good expansion properties, from which bounds on various structural parameters of the giant component such as its diameter, circumference and mixing time can be derived.
\begin{question}\label{q:structure}
  What can we say about the structural properties of the giant component $L$ of a supercritical percolated $n$-dimensional permutahedron?
  \begin{enumerate}
  \item\label{i:one} Does $L$ whp contain a path of length $\Theta((n+1)!)$?
  \item What is the likely diameter of $L$?
  \item\label{i:three} What is the likely mixing time of a lazy random walk on $L$?
  \end{enumerate}
\end{question}
Using the methods of~\cite{EKK22,DEKK23+} together with the tools developed in this paper, it should be relatively straightforward to obtain \emph{some} lower bound on the expansion of the giant component in $\P{n}$, in particular one which is inverse polynomial in $n$, which would give answers to Question~\ref{q:structure}~\eqref{i:one}--\eqref{i:three} which are tight up to some power of $n$.
However, it seems unlikely that one could obtain an optimal bound (even up to polylogarithmic factors) as in~\cite{DEKK23+} via these methods without first understanding better the isoperimetric properties of $\P{n}$, in particular for large sets.
For this reason, we have not made much attempt to optimise the quantitative aspects of the arguments in Section~\ref{s:perc}.

Furthermore, for the diameter and mixing time, it is not obvious what natural lower bounds there are for these quantities.
In the case of $G(n,p)$ and $Q^n_p$, similar arguments as in Lemma~\ref{l:treecomp} show the likely existence of a bare path of length $\Omega(\log n)$ and $\Omega(n)$ respectively, leading to natural lower bounds on the diameter and mixing time in these cases.
However, the methods of Lemma~\ref{l:treecomp} only apply to trees with small depth and maximum degree, and so in particular are not effective for counting paths in $\P{n}$.
For this reason, it would be interesting to know what the length of the longest bare path in $\P{n}_p$ is, and to this end to effectively estimate the number of paths of length $\Theta(n \log n)$ in $\P{n}$.
From the other side, the diameter of $\P{n}$ is clearly $\binom{n}{2}$ and the mixing time of the lazy random walk on $\P{n}$ is known to be $\Theta(n^2 \log n)$~\cite{W04,L16}.
Whilst neither parameter is increasing under taking subgraphs, analogues to the case of $G(n,p)$ and $Q^n_p$ suggest that in the supercritical regime we should expect the giant component to have a diameter and mixing time which is slightly larger (by a logarithmic factor) than that of the host graph.

As mentioned above, whilst we can determine quite precisely the expansion of small sets in $\P{n}$, the isoperimetric properties of $\P{n}$ in general seem to not be very well-understood.
We discussed the edge-isoperimetric problem in Section~\ref{s:iso}, but the analogous \emph{vertex-isoperimetric problem} in $\P{n}$ is also very interesting.
A reasonably natural conjecture here would be that the vertex-boundary is minimised by Hamming balls in Kendall's $\tau$-metric (the distance metric on $S_{n+1}$ induced by $\P{n}$).

Moving on to the connectivity threshold, in both $G(n,p)$ and $Q^n_p$, it has been shown that the connectivity threshold is asymptotically the same as the threshold for containing a perfect matching and Hamiltonian cycle.
In the case of the hypercube, both the connectivity threshold and the threshold for the existence of a perfect matching can be shown using quite delicate but elementary first moment method arguments, which rely on strong and explicit isoperimetric inequalities for the hypercube, in particular Theorem~\ref{th:Harper}.
On the other hand, the threshold for containing a Hamiltonian cycle in this model was only recently resolved, in groundbreaking work by Condon, Espuny D{\'\i}az, Gir{\~a}o, K{\"u}hn and Osthus~\cite{CDGKO21}.

However, in the case of the permutahedron, even the existence of a Hamiltonian cycle in the host graph is non-trivial (see, e.g.,~\cite[Section 7.2.1.2]{K11}).
\begin{question}
  What is the threshold for the existence of a perfect matching or Hamiltonian cycle in $\P{n}_p$?
  Do they coincide with the connectivity threshold?
\end{question}

Finally, viewing the permutahedron as a $\mathcal{V}$-polytope rather than as a graph, there is perhaps another natural notion of a random substructure of $\P{n}$ given by taking a random subset of the permutations, given by including each permutation independently and with probability $p$, and considering the convex hull of these points, which we denote by $P(n,p)$.
From the other direction, considering the permutahedron as an $\mathcal{H}$-polytope, we could equally consider the constraints determining the permutahedron as an intersection of hyperplanes and choose a random subset of these constraints.
\begin{question}
  Let $p \in (0,1)$.
  \begin{enumerate}
  \item What is the likely volume of $P(n,p)$?
  \item What is the likely number of lattice points inside $P(n,p)$?
  \item What is the likely number of facets of $P(n,p)$?
  \item What is the likely number of edges of $P(n,p)$?
  \end{enumerate}
\end{question}
In the case of $P(n) = P(n,1)$ these parameters have simple combinatorial interpretations, and it would be interesting to see if there are combinatorial, or stochastically combinatorial, interpretations of the corresponding parameters in $P(n,p)$.
The model is also closely related to the model of random $0/1$ polytopes considered in~\cite{LR22}, where they showed that a weaker form of Mihail and Vazirani's conjecture holds for almost all $0/1$ polytopes.
This result was later strengthened by Ferber, Krivelevich, Sales and Samotij~\cite{FKSS26}.
It would be interesting to know if the typical expansion in $P(n,p)$ is always smaller than that of $P(n)$, or even just if it can be bounded as an inverse polynomial in $n$.

\subsection*{Acknowledgements}
The authors would like to thank Sahar Diskin for useful conversations regarding the Projection-First Search algorithm and Cesar Ceballos for suggesting the model. The authors would also like to thank the anonymous referees for their diligent work and detailed suggestions, which greatly improved the manuscript. 

This research was funded in whole or in part by the Austrian Science Fund (FWF) [10.55776/P36131, 10.55776/P33278].
For open access purposes, the author has applied a CC BY public copyright license to any author accepted manuscript version arising from this submission. M.C. was supported by CNPq (407970/2023-1, 420838/2025-2) and FAPESP (2023/03167-5, 2024/13859-4). This study was financed in part by the Coordenação de Aperfeiçoamento de Pessoal de Nível Superior -- Brasil (CAPES) -- Finance Code 001.

\bibliographystyle{abbrv}
\bibliography{perm}

\appendix
\section{Modified projection-first search and its applications}\label{a:modified-PFS}
We will use the following lemma about the expectation of a truncated binomial random variable.

\begin{lemma}\label{l:restricted}
  Let $\beta > 0$, let $m \in \mathbb{N}$, let $p= \frac{1 + \beta}{m}$, let $m' \geq \bigl(1 - \min\bigl\{\frac{\beta}{2}, \frac{1}{18}\bigr\}\bigr)m$, and let $K \geq \max\{2emp, \log_2 (\beta^{-2})\}$.
  If $X \sim \Bin(m',p)$ and $Y = \min\{X , K\}$, then
  \[
    \mathbb{E}[Y] \geq 1+\frac{\beta}{4}.
  \]
\end{lemma}

\begin{proof}
  If $m' > m$, then $X$ stochastically dominates $X' \sim \Bin(m, p)$.
  Therefore, it suffices to prove the lemma when $m' \leq m$, since replacing $X$ by $X'$ can only decrease $\mathbb{E}[Y]$.

  By Lemma~\ref{l:chernoff}~\eqref{i:chernoff2} and the fact that $K \geq 2em'p$, we have
  \[
    \mathbb{E}[X - Y] = \sum_{t=K+1}^n \mathbb{P}(X \geq t) \leq \sum_{t=K+1}^n \Bigl(\frac{em'p}{t}\Bigr)^{t} \leq \sum_{t=K+1}^\infty \Bigl(\frac{1}{2}\Bigr)^t = 2^{-K}.
  \]
  Let $1/9 \leq \beta_0 \leq 1/8$ be the solution to the equation $2e(1+x) = \log_2(x^{-2})$.
  If $0 < \beta \leq \beta_0$, we have $K \geq \log_2(\beta^{-2})$, and therefore $\mathbb{E}[Y] \geq \mathbb{E}[X] - \beta^2 \geq \bigl(1- \frac{\beta}{2}\bigr)(1 + \beta) - \beta^2 \geq 1 + \frac{\beta}{4}$.
  Otherwise, $\beta > 1/9$ implies $m' \geq 17m/18$, and therefore $\mathbb{E}[Y] \geq \mathbb{E}[X] - 2^{-2emp} \geq \frac{17(1+\beta)}{18} - 2^{-2e(1+\beta)} \geq 1+\frac{\beta}{4}$.
\end{proof}

We start by proving Lemma~\ref{l:PFSprecise}, as promised.

\begin{proof}[Proof of Lemma~\ref{l:PFSprecise}]
  The algorithm PFS$'$ is as follows: As in the proof of Lemma~\ref{l:PFS}, we first run the PFS exploration process starting at $v$ for $\tau_1 := \log \log m$ many rounds, conditioning at the start of each round $t\leq \tau_1$ on the event
  \[
    \mathcal{E}(t) = \text{``$w(x) \leq \log^2 m$ for each $x \in W(t)$''}.
  \]
  As before, we can couple this initial exploration process from below with a Galton--Watson tree with child distribution $\Bin(m-\log^2 m,p)$.
  In particular, since $(1+\beta+o_m(1))^{\tau_1} \geq \log \log m$, by Lemma~\ref{l:branching} we have that
  \begin{equation}\label{e:A(1)toosmall}
    \mathbb{P}\bigl( |A(\tau_1+1)|\leq  \log \log m \bigm| \mathcal{E}(\tau_1)\bigr) \leq 1 - \gamma(1+\beta) + o_m(1).
  \end{equation}
  In what follows we condition on $\mathcal{E}(\tau_1)$, writing $\mathbb{P}^*$ for the probability in this conditional space.
  Since $\mathbb{P}\bigl(\mathcal{E}(\tau)^c\bigr) = \sum_{t=1}^{\tau_1-1} \mathbb{P}\bigl(\mathcal{E}(t+1)^c \bigm| \mathcal{E}(t)\bigr) = o_m(1)$, any event $\mathcal{F}$ satisfies \begin{equation}\label{e:PstarP-asymp}
    \mathbb{P}^*(\mathcal{F}) = \mathbb{P}(\mathcal{F}) + o_m(1).
  \end{equation}

  We now continue with a slightly modified PFS process, starting with frontier $A(\tau_1+1)$, except when we expose the neighbourhood $N_t(x)$ of a vertex $x$ we stop after we have found
  \[
    K := \max\{2emp, \log_2 (\beta^{-2})\}
  \]
  neighbours.
  To be more precise, in each round $t$, for each $x \in A(t)$ we expose the edges incident to $x$ in $H(x)$ one-by-one, stopping once $K$ neighbours have been discovered (or all edges have been exposed).

  Let us suppose we run this latter step for a further $\tau_2:=\min\{\frac{\beta}{3}, \frac{1}{20}\}\frac{m}{K}$ rounds.
  Since fact~\eqref{i:dimension} continues to hold in this modified process, if we condition on $\mathcal{E}(\tau_1)$, then deterministically for each $x \in A(\tau_1+\tau_2)$, we have
  \[
    w(x) \leq \log^2 m + K\tau_2 \leq \log^2 m + \min\Bigl\{\frac{\beta}{3}, \frac{1}{20}\Bigr\}m \leq \min\Bigl\{\frac{\beta}{2}, \frac{1}{18}\Bigr\} m.
  \]
  Hence, under this conditioning, during this second round, whenever we exposed the neighbourhood of a vertex $y$, the size of the neighbourhood $N_t(y)$ we discovered stochastically dominates a random variable distributed as $\min \bigl\{ \Bin\bigl(\bigl(1 - \min\bigl\{\frac{\beta}{2}, \frac{1}{18}\bigr\}\bigr)m, p\bigr),K\bigr\}$, which by Lemma~\ref{l:restricted} has expectation at least $1+\frac{\beta}{4}$.

  In particular, for any $\tau_1 < t \leq \tau_1+\tau_2$, $|A(t+1)|$ stochastically dominates the sum of $|A(t)|$ independent variables, each with mean $1+\frac{\beta}{4}$ and bounded by $K$.
  Therefore, if we let $\alpha:=1+\frac{\beta}{8}$, then by the Azuma--Hoeffding inequality (Lemma~\ref{l:azuma}),
  \[
    \mathbb{P}^*\bigl(|A(t+1)| \leq \alpha k \, \bigm| \, |A(t)| = k  \bigr) \leq 2\exp\bigl(- ck\bigr)
  \]
  for $c = \beta^2/(128 K^2)$.
  Therefore, using~\eqref{e:A(1)toosmall}, we may bound
  {\allowdisplaybreaks
  \begin{align*}
    &\mathbb{P}^*\bigl(|A(\tau_1+\tau_2+1)| \leq \alpha^{\tau_2} \log \log m \bigr)\\
    &\leq \mathbb{P}^*\bigl( |A(\tau_1+1)|\leq  \log \log m \bigr) + \sum_{t=1}^{\tau_2} \mathbb{P}^*\biggl(\frac{|A(\tau_1+t+1)|}{\log \log m} \leq \alpha^{t} \,\biggm|\, \frac{|A(\tau_1+t)|}{\log \log m} \geq \alpha^{t-1} \biggr)\\
    &\leq \mathbb{P}^*\bigl( |A(\tau_1+1)|\leq  \log \log m\bigr) + \sum_{t=1}^{\tau_2} 2 \exp\bigl( - c\alpha^{t-1} \log \log m\bigr)\\
    &\leq 1 -  \gamma(1+\beta) + o_m(1).
  \end{align*}}%
  Using~\eqref{e:PstarP-asymp} and $\alpha^{\tau_2} = \exp\bigl(\Theta(m)\bigr)$, it follows that
  \[
    \mathbb{P}\bigl(|C_v| = e^{\Theta(m)}\bigr)\geq \gamma(1+\beta) + o_m(1),
  \]
  finishing the proof.
\end{proof}

Let us compare the strength of this method to previous results on the phase transition in such models.

In the first proof of the existence of a giant component in $Q^n_p$ given by Ajtai, Koml\'{o}s and Szemer\'{e}di~\cite{AKS81}, they use a two-step argument to show that each vertex is contained in a cluster of size $\Omega(n^2)$ with some constant probability $c(\beta)$, although their methods can easily be used inductively to show that the same holds for clusters of size $n^k$ for arbitrary $k \in \mathbb{N}$, and with careful bookkeeping one can take $c(\beta)=\gamma(1+\beta) + o(1)$.
The later work of Bollob\'{a}s, Kohayakawa and {\L}uczak~\cite{BKL92} avoids having to estimate the probability that a vertex lies in a large cluster by instead demonstrating a gap in the order of the components and estimating the number of vertices contained in \emph{small} clusters.
However, their proof method relies on particular strong and explicit bounds on the isoperimetric profile of the hypercube, and so cannot be applied in other contexts.
In the work of Diskin, Erde, Kang and Krivelevich~\cite{DEKK22,DEKK23}, an inductive argument is used together with a projection lemma to show that each vertex is contained in a cluster of size $n^{r}$ for any $r = o\bigl( n^{1/3} \bigr)$ with probability $\gamma(1+\beta) + o(1)$, and a natural limit to their method seems to be $r = n^{1/2}$.

In comparison, using for example~\cite[Lemma 3.1]{DEKK22} as a projection lemma, the proof of Lemma~\ref{l:PFSprecise} shows that each vertex in $Q^n_p$ is contained in a cluster of size $2^{\Theta(n)}$ with probability at least $\gamma(1+\beta) + o(1)$.
This strengthening seems to have a few uses --- beyond improving some quantitative aspects of previous work (for example, it should be possible to improve the bound on the isoperimetric constant in~\cite[Theorem 8]{DEKK23} from $t^{-t^{1/4}}$ to something closer to $2^{-\Theta(t)}$), an immediate application is to percolation in irregular product graphs.

\subsection{Percolation in irregular product graphs}

In~\cite{DEKK23} it was observed that, whilst all regular product graphs undergo a quantitatively similar phase transition around the percolation threshold as $G(n,p)$, the same is not necessarily true in the irregular case.
In particular, if we consider the product of a number of copies of a star, a third regime in the phase transition appears, where there is still no giant component, but there are components of (almost linear) polynomial size.
The authors of~\cite{DEKK23} asked whether this holds in fact for \emph{any} irregular product graph.
\begin{question}[{\cite[Question 26]{DEKK23}}]\label{q:generalised-many-stars}
  For all $i\in [t]$, let $G^{(i)}$ be an irregular connected graph of order at most $C>0$.
  Let $G=\bigcart_{i=1}^t G^{(i)}$.
  Let $\eps>0$ be a small enough constant, and let $p= \frac{1-\eps}{d}$, where $d$ is the average degree of $G$.
  Does there exist a $c(\eps,C)>0$ such that whp the largest component in $G_p$ has order at least $|G|^c$?
\end{question}

Using the projection lemma from~\cite[Lemma 13]{DEKK23}, it is relatively straightforward to use the methods of Lemma~\ref{l:PFSprecise} to show that Question~\ref{q:generalised-many-stars} has a positive answer.
Let us sketch an argument here.
For background on properties of product graphs, see~\cite{DEKK22,DEKK23}.
Indeed, suppose $G=\bigcart_{i=1}^t G^{(i)}$ is as in \Cref{q:generalised-many-stars}.
Here, the projection lemma from~\cite[Lemma 13]{DEKK23} roughly tells us that given a small set $X$ of vertices we can find a disjoint collection of \emph{projections} whose codimension is at most $|X|-1$ which cover these vertices.
Here, a projection is an induced subgraph of $G$ on a set of vertices given by fixing some subset of the coordinates and letting the other coordinates vary, and the codimension of such a projection is the number of fixed coordinates.

Note that, since each $G^{(i)}$ is connected, $t \leq d_G(v) \leq Ct$. A particularly useful fact is that the degree function on $G$ is Lipschitz in the graph distance, in the sense that for any $v,w \in V(G)$, we have $|d_G(v) - d_G(w)| \leq (C-1) \dist_G(v,w)$.
Roughly, product graphs look `locally almost-regular'.
Furthermore, it is easy to see that if $H$ is a projection of $G$ of codimension at most $k$, then for any vertex $v \in H$, we have $d_G(v) - d_H(v) \geq (C-1)k$.

Suppose that we run the PFS$'$ exploration process in $G_p$ starting at a vertex $v$, where $p = \frac{1+\beta}{d_G(v)}$. Let $K = \max\{2e(1+\beta), \log_2 (\beta^{-2})\}$ be as in the proof of Lemma~\ref{l:PFSprecise}, and let $\beta' = \min\{\beta, \frac{1}{2}\}$.
Let $c = c(\beta, C)$ be sufficiently small, and suppose that we are considering a vertex $x \in W(ct)$ and its associated projection $H(x)$.
$H(x)$ will have codimension at most $(K + o(1))ct \leq \beta' d_G(x)/(6C)$ and $\dist_G(v,x) \leq \beta' d_G(x)/(6C)$ and so by the above, $d_{H(x)}(x) \geq \bigl(1 - \frac{\beta'}{3}\bigr)d_G(x)$.
In particular, if we run the process until depth $ct$, then throughout this period $p \cdot d_{H(x)}(x) \geq 1+ \frac{\beta}{2}$ and the tree should grow exponentially.
In this way, a similar argument as in the proof of \Cref{l:PFSprecise} will show the following.

\begin{lemma}\label{l:PFSirreg}
  Let $H=\bigcart_{i=1}^t H^{(i)}$ be a product graph whose factors have bounded size.
  Let $\beta > 0$, let $v \in V(H)$ and let $p \cdot d_H(v) \geq 1 + \beta$.
  Then
  \[
    \mathbb{P}\bigl(|C_v| = e^{\Omega(t)}\bigr)\geq \gamma(1+\beta) +o_t(1).
  \]
\end{lemma}

Let $d(H)$ denote the average degree of a graph $H$.
Since each $G^{(i)}$ is irregular and of bounded size, it is easy to see that, for each $i \in [t]$,
\[
  \Delta(G^{(i)}) \geq d(G^{(i)}) + \frac{1}{C} \geq \Bigl(1 + \frac{1}{C^2}\Bigr) d(G^{(i)}).
\]
One may check that $d(G) = \sum_{i=1}^t d(G^{(i)})$ and hence $\Delta(G) \geq \bigl(1+\frac{1}{C^2}\bigr)d(G)$.
In particular, we can choose $\eps>0$ sufficiently small such that $\frac{1-\eps}{d(G)} \cdot \Delta(G) > 1 + \eps/2$.

Then, since $G$ is locally almost-regular, we can find a set $X$ of size $\log^2 t$ such that $d_G(x) = (1+o_t(1))\Delta(G)$ for each $x \in X$, for example by perturbing a vertex of maximum degree in a small number of coordinates.
If we apply the projection lemma to this set $X$, we obtain a disjoint family $\{H(x) : x \in X\}$ of projections of codimension at most $\log^2 t$ and hence $d_{H(x)}(x) = (1+o_t(1))\Delta(G)$ for each $x \in X$.

Applying \Cref{l:PFSirreg} to each $x \in X$ in $H(x)$ we see that each $x$ has a positive probability to be contained in a component of order $\exp\bigl(\Omega(t)\bigr)$ in $H(x)_p$, where $p = \frac{1-\eps}{d(G)}$.
Since these events are independent for different $x$, the probability that none of them happen is $o_t(1)$.
Finally, since
\[
  |G| = \prod_{i=1}^t |G^{(i)}| = \exp\bigl(O(t)\bigr),
\]
an affirmative answer to Question~\ref{q:generalised-many-stars} follows.
In fact, all we require for the above argument is that $p \cdot \Delta(G) \geq 1+\eps$.
Note that, conversely, if $p \cdot \Delta(G) \leq 1 - \eps$, then~\cite[Theorem 2]{DEKK22} implies that whp the largest component in $G_p$ has order $O( \log |G| )$.

As an explicit example, this implies that the phase-transition in a high-dimensional grid $[3]^n$ is qualitatively different to the phase-transition on a high-dimensional torus $\mathbb{Z}_3^n$.
Indeed, in $\mathbb{Z}_3^n$ it is known (see, e.g.~\cite{DEKK22}) that there is a sharp threshold at $p=\frac{1}{2n}$ for the emergence of a giant component, and that below this threshold whp the largest component is of order $O(n)$, and so is logarithmic in the size of the base graph.
In contrast, together with the results of~\cite{DEKK23}, the above implies that in $G = [3]^n$ there are at least three distinct regimes:
\begin{itemize}
\item a \emph{subcritical} regime when $p < \frac{1}{\Delta(G)} = \frac{1}{2n}$ in which whp the largest component has logarithmic order;
\item a \emph{supercritical} regime when $p > \frac{1}{d(G)} = \frac{3}{4n}$ in which whp there is a unique component of linear order;
\item an \emph{intermediary} regime when $\frac{1}{2} < pn < \frac{3}{4}$ where whp the largest component is sublinear, but still has polynomial order.
\end{itemize}
It would be very interesting to determine more precisely the size of the largest components in this intermediary regime, and we hope to study this problem in more detail in future work.

\section{A projection lemma for bipartite Kneser graphs}\label{a:kneser}

Recall that, given $n,k$ with $k < n/2$ the \emph{bipartite Kneser graph} $H(n,k)$ is a bipartite graph with partition classes $[n]^{(k)}$ and $[n]^{(n-k)}$, where a subset in the first partition class is adjacent to each of its supersets in the second partition class.
It will be convenient for us to consider a slightly more general class of graphs defined as follows: given $n,k_1,k_2$ with $k_1,k_2 < n/2$ we define $H(n,k_1,k_2)$ to be a bipartite graph with partition classes $[n]^{(k_1)}$ and $[n]^{(n-k_2)}$, where a subset in the first partition class is adjacent to each of its supersets in the second partition class.
Note that, with this definition, $H(n,k) = H(n,k,k)$.
It will be convenient to think of the vertex set of $H(n,k_1,k_2)$ as living in $\{0,1\}^n$ via the standard association of a set with its characteristic vector.

In the particular case $k_1+k_2=n-1$, the graph $H(n,k_1,k_2)$ is isomorphic to the induced subgraph of the hypercube $Q^n$ between the two adjacent layers, i.e., the sets of vertices with Hamming weight $k_1$ and $n-k_2 = k_1+1$.
Furthermore, when $n$ is odd and $k=\frac{n-1}{2}$, the bipartite Kneser graph $H(n,k)$ is known as the \emph{middle layers graph} and is denoted by $M_n$.

Given any two disjoint subsets $I,J \subseteq [n]$ we can consider the subgraph of $H(n,k_1,k_2)$ induced by the vertex set
\[
  V(I,J) = \{ x \in V(H(n,k_1,k_2)) : x_i = 1 \text{ for all } i \in I \text{ and } x_j = 0 \text{ for all } j \in J\}.
\]
It is easy to see that
\[
  H(n,k_1,k_2)\bigl[V(I,J)\bigr] \cong H\bigl(n - |I| - |J|, k_1 - |I|, k_2 - |J|\bigr).
\]


\begin{lemma}
  Let $n$ be an odd integer and let $X \subseteq M_n$ have order $|X| \leq n/4$.
  Then there exists a disjoint family of subgraphs $\{M(x) \colon x \in X\}$ of $M_n$ such that
  \begin{itemize}
  \item For each $x \in X$ there exists $i(x)$ such that $i(x) \leq 2(|X|-1)$ and $M(x) \cong M_{n-i(x)}$;
  \item $x \in V(M(x))$ for all $x \in X$.
  \end{itemize}
\end{lemma}

\begin{proof}
  For distinct $x,y \in X$, let $k(x,y)$ be an arbitrarily-chosen coordinate in which $x$ and $y$ differ and let $K(x) = \{k(x,y) : y \in X \setminus \{x\}\}$.
  It is clear that we can choose disjoint sets $I(x),J(x) \subseteq [n]$ such that the following hold:
  \begin{itemize}
  \item $K(x) \subseteq I(x) \cup J(x)$;
  \item $\max\{|I(x)|, |J(x)|\} \leq |X|-1$;
  \item $x_i =1$ for all $i \in I(x)$; and
  \item $x_j = 0$ for all $j \in J(x)$.
  \end{itemize}
  By enlarging $I(x)$ or $J(x)$ if necessary, we can also assume that $|I(x)| = |J(x)|$.
  We let
  \[
    M(x) = M_n\bigl[V(I(x),J(x))\bigr],
  \]
  noting that by construction $x \in V(M(x))$.
  Writing $k = \frac{n-1}{2}$ and $i(x) = |I(x)| + |J(x)|$, we have
  \[
    M(x) \cong H\bigl(n-i(x), k-|I(x)|, k-|J(x)|\bigr) = M_{n-i(x)},
  \]
  since $k-|I(x)| = k-|J(x)| = \frac{n-i(x)-1}{2}$ and $i(x) \leq 2(|X|-1)$.
  Furthermore, for any $x,y \in X$, where without loss of generality $x_{k(x,y)} = 1$ and $y_{k(x,y)} =0$, it holds by construction that $M(x)$ is contained in the half-space $V(\{k(x, y)\}, \emptyset)$ and $M(y)$ is contained in the complementary half-space $V(\emptyset, \{k(x,y)\})$, and so $M(x)$ and $M(y)$ are disjoint.
\end{proof}

This argument can be easily generalised to give similar projection lemmas for other classes of $H(n,k_1,k_2)$.

\end{document}